\documentclass[10pt,oneside,reqno]{article}

\usepackage[a4paper,width=15cm,height=24cm]{geometry}

\usepackage{amsmath,amsthm,amssymb}
\usepackage{bbm}
\usepackage{mathrsfs}

\usepackage{natbib}
\usepackage{color}
\usepackage[dvipsnames]{xcolor}
\usepackage{tikz}
\usepackage{tikz-cd}
\usetikzlibrary{shapes.geometric}
\usepackage{subcaption}
\usepackage{caption}
\usepackage[mathlines]{lineno}

\usepackage{filemod}
\usepackage[breaklinks=true]{hyperref}

\theoremstyle{plain}
\newtheorem{theorem}{Theorem}[section]

\newtheorem{lemma}[theorem]{Lemma}
\newtheorem{corollary}[theorem]{Corollary}

\theoremstyle{definition}

\newtheorem{remark}[theorem]{Remark}

\newtheorem{proofclaim}{Claim}
\let\oldproof\proof
\renewcommand{\proof}{%
    \setcounter{proofclaim}{0}%
    \oldproof%
}

\makeatletter

\def\given{\typeout{Command 'given' should only be used within bracket command}}
\newcounter{@bracketlevel}
\def\@bracketfactory#1#2#3#4#5#6{
\expandafter\def\csname#1\endcsname##1{%
\addtocounter{@bracketlevel}{1}%
\global\expandafter\let\csname @middummy\alph{@bracketlevel}\endcsname\given%
\global\def\given{\mskip#5\csname#4\endcsname\vert\mskip#6}\csname#4l\endcsname#2##1\csname#4r\endcsname#3%
\global\expandafter\let\expandafter\given\csname @middummy\alph{@bracketlevel}\endcsname
\addtocounter{@bracketlevel}{-1}}%
}
\def\bracketfactory#1#2#3{%
\@bracketfactory{#1}{#2}{#3}{relax}{1mu plus 0.25mu minus 0.25mu}{0.6mu plus 0.15mu minus 0.15mu}
\@bracketfactory{b#1}{#2}{#3}{big}{1mu plus 0.25mu minus 0.25mu}{0.6mu plus 0.15mu minus 0.15mu}
\@bracketfactory{bb#1}{#2}{#3}{Big}{2.4mu plus 0.8mu minus 0.8mu}{1.8mu plus 0.6mu minus 0.6mu}
\@bracketfactory{bbb#1}{#2}{#3}{bigg}{3.2mu plus 1mu minus 1mu}{2.4mu plus 0.75mu minus 0.75mu}
\@bracketfactory{bbbb#1}{#2}{#3}{Bigg}{4mu plus 1mu minus 1mu}{3mu plus 0.75mu minus 0.75mu}
}
\bracketfactory{clc}{\lbrace}{\rbrace}
\bracketfactory{clr}{(}{)}
\bracketfactory{cls}{[}{]}
\bracketfactory{abs}{\lvert}{\rvert}
\bracketfactory{norm}{\Vert}{\Vert}
\bracketfactory{floor}{\lfloor}{\rfloor}
\bracketfactory{ceil}{\lceil}{\rceil}
\bracketfactory{angle}{\langle}{\rangle}

\newcounter{ctr}\loop\stepcounter{ctr}\edef\X{\@Alph\c@ctr}%
	\expandafter\edef\csname s\X\endcsname{\noexpand\mathscr{\X}}
	\expandafter\edef\csname c\X\endcsname{\noexpand\mathcal{\X}}
	\expandafter\edef\csname b\X\endcsname{\noexpand\boldsymbol{\X}}
	\expandafter\edef\csname I\X\endcsname{\noexpand\mathbbm{\X}}
	\expandafter\edef\csname r\X\endcsname{\noexpand\mathrm{\X}}
\ifnum\thectr<26\repeat

\newcount\minute
\newcount\hour
\newcount\hourMins
\def\now{%
\minute=\time%
\hour=\time \divide \hour by 60%
\hourMins=\hour \multiply\hourMins by 60%
\advance\minute by -\hourMins%
\zeroPadTwo{\the\hour}:\zeroPadTwo{\the\minute}%
}
\def\zeroPadTwo#1{\ifnum #1<10 0\fi#1}

\numberwithin{equation}{section}
\allowdisplaybreaks[4]

\renewcommand\section{\@startsection {section}{1}{\z@}%
{-3.5ex \@plus -1ex \@minus -.2ex}%
{1.3ex \@plus.2ex}%
{\center\small\sc\mathversion{bold}\MakeUppercase}}

\def\subsection#1{\@startsection {subsection}{2}{0pt}%
{-3.5ex \@plus -1ex \@minus -.2ex}%
{1ex \@plus.2ex}%
{\bf\mathversion{bold}}{#1}}

\def\subsubsection#1{\@startsection{subsubsection}{3}{0pt}%
{\medskipamount}%
{-10pt}%
{\normalsize\itshape}{\kern-2.2ex. #1.}}

\def\blfootnote{\xdef\@thefnmark{}\@footnotetext}

\makeatother

\renewcommand{\cite}{\citep}

\def\^#1{\ifmmode {\mathaccent"705E #1} \else {\accent94 #1} \fi}
\def\~#1{\ifmmode {\mathaccent"707E #1} \else {\accent"7E #1} \fi}

\edef\-#1{\noexpand\ifmmode {\noexpand\bar{#1}} \noexpand\else \-#1\noexpand\fi}
\def\>#1{\vec{#1}}
\def\.#1{\dot{#1}}

\def\atop{\@@atop}

\renewcommand{\leq}{\leqslant}
\renewcommand{\geq}{\geqslant}
\renewcommand{\phi}{\varphi}

\usepackage{nomencl}
\makenomenclature

\usepackage{mwe}
\usepackage{enumitem}
\usepackage{xcolor}
\usepackage{mathtools} 

\newcommand{\R}{\mathbb{R}}
\newcommand{\N}{\mathbb{N}}
\newcommand{\E}{\mathbb{E}}

\renewcommand{\P}{\mathbb{P}}

\newcommand{\ind}{\mathbf{1}}

\newcommand{\indep}{\perp\!\!\!\perp}

\newtheorem*{theorem*}{Theorem}
\newtheorem*{lemma*}{Lemma}

\newcommand{\AuthorAffiliation}{University of California, Berkeley, Berkeley, CA 94720, USA}
\newcommand{\CorrespondingAuthorEmail}{\texttt{zachary\_mcnulty@berkeley.edu}}

\title{An Improved Bipartition Cover Bound for the Multispecies Coalescent Model}
\author{%
Zachary McNulty\textsuperscript{a}\\[0.5em]
\small \textsuperscript{a}\AuthorAffiliation\\
\small Email: \CorrespondingAuthorEmail\\
\small Corresponding author: Zachary McNulty%
}
\date{}

\begin{document}

\maketitle

\begin{abstract}
Bipartition cover probabilities quantify whether a collection of gene trees contains every 
bipartition of the underlying species tree, a condition used by some species-tree inference methods to restrict the search space without excluding the true tree. We study this problem under the multispecies coalescent (MSC) model and derive topology-free 
upper bounds on the number of loci needed to obtain a bipartition cover with prescribed confidence, improving
significantly upon existing bounds. Practically, our bounds remain below biologically
realistic numbers of loci across a substantially broader range of parameter settings, expanding their usefulness
for empirical datasets. Theoretically, our analysis sharpens our understanding of coalescence under the MSC model and
develops new asymptotics for these bounds and absorption times under Kingman's coalescent in the natural short branch
regime. Simulations across several species-tree topologies confirm that the new bounds substantially improve upon existing bounds, although their tightness varies with topology.
\end{abstract}

\noindent\textbf{Keywords} bipartition cover; multispecies coalescent model;  species-tree inference; ASTRAL

\section{Introduction}
\label{sec:introduction}

Consensus methods have grown in popularity in phylogenetics as 
large genomic datasets have revealed the limits of single-gene 
tree inference. Evolutionary processes such as incomplete lineage
 sorting, gene duplication and loss, and horizontal gene transfer 
 can cause individual gene trees to differ from the true species 
 tree. Consensus and summary-based methods aim to reconcile this
  discordance by aggregating information across many loci, 
  providing statistically consistent estimates of species 
  relationships under realistic models of gene tree variation.

However, even for a relatively small number of species, the space of possible species tree 
topologies is intractably large. Because of this, it is often necessary to reduce the 
search space somehow. One common strategy is to restrict to species trees whose set of
 bipartitions is contained in the set of bipartitions generated by the gene trees, possibly 
 expanding this set with some collection of heuristics. This is the case for many modern 
 phylogenetic algorithms, including FastRFS \cite{VachaspatiWarnow2017FastRFS} and the widely used ASTRAL 
 family: ASTRAL \cite{mirarab2014astral}, ASTRAL-II \cite{mirarab2015astral}, ASTRAL-III \cite{zhang2018astral}, and wASTRAL \cite{Zhang2022_weighted_ASTRAL}.

    In the case the true species tree lies in this restricted space, we say the gene trees form 
    a \textit{bipartition cover} of the species tree. When this occurs, ASTRAL comes with some strong 
    finite-sample guarantees \cite{shekhar2018_how_many_genes_are_enough}. When it does not occur, 
    ASTRAL provides no guarantees on how the estimated tree will relate to the true species tree. As a result, 
    it is important to know how likely such a bipartition cover is to occur when assessing whether these finite-sample guarantees apply.
    Moreover, since an experimenter must decide how many loci to collect before the underlying species 
    tree topology is known, they cannot calibrate the required sample size to that topology. A topology-free upper bound 
    provides a conservative benchmark for the sample size required to obtain a bipartition cover with a prescribed 
    probability. The paper by \citet{uricchio2016_bipartition_cover} develops 
    such a bound as a function only of two key parameters: the number of species $k$ and the minimum
    branch length $T_{\mathrm{min}}$ of the tree.

    In this paper, we identify a few areas where the bound of \citet{uricchio2016_bipartition_cover} 
    is lossy and develop some topology-free improvements. Our main contributions come from careful analysis 
    of various ``worst cases'' of the MSC model: species tree topologies that somehow make coalescence 
    difficult and certain bipartitions challenging to recover. Beyond this, since all these bounds are 
    complicated functions of the underlying parameters, we develop some asymptotics to describe 
    the growth rates of these bounds as functions of these parameters in a few natural regimes.

\subsection{The Multispecies Coalescent Model}
\label{sec:multispecies-coalescent-model}

We start with a brief discussion of the \textit{multispecies coalescent (MSC) model}, which is 
the probabilistic model underlying all the theoretical guarantees of ASTRAL and many other phylogenetic
 inference algorithms. The MSC model describes how gene tree topologies are generated from a 
 given species tree \citep{yang2003bayes,rannala2020msc}. It is in a sense the simplest model 
 that can explain the phenomenon of \textit{incomplete lineage sorting (ILS)}, the observation 
 that the most common gene tree topology can differ from the species tree topology. 
 This phenomenon makes species tree inference difficult: the intuitive ``majority vote'' estimator
  is not consistent. 

The MSC model is essentially just a constrained version of Kingman's coalescent: going backwards in 
time along each branch of the species tree, we assume the surviving gene lineages coalesce according 
to Kingman's coalescent. Namely, each pair of lineages coalesce at rate one. Figure~\ref{fig:msc_model} 
gives an example of the MSC and illustrates how gene tree topologies can differ from the species tree. 
More concretely, let

\begin{equation}
    g_{ij}(T) := \sum_{k=j}^i \frac{\exp(-{k \choose 2}T) (2k-1)(-1)^{k-j}j_{(k-1)} i_{[k]}}{j!(k-j)!i_{(k)}}
    \label{eqn:def_of_g_ij}
\end{equation}

\noindent be the probability that $i$ lineages coalesce into exactly $j$ lineages in
 time $T$ under Kingman's coalescent. Here $a_{(k)} = a(a+1) \cdot ... \cdot (a+k-1)$ and $a_{[k]} = a(a-1)\cdot ... \cdot (a-k+1)$ 
 denote the rising and falling factorial respectively, and the time $T$ is measured in coalescent units. The function $g_{ij}(T)$ 
 is the basis of the original bound in \citet{uricchio2016_bipartition_cover} and
  all the new bounds we develop. In the setting of the MSC model, $T$ represents
   the length of a given branch of the species tree, and $g_{ij}$ describes the 
   distribution of the number of lineages that remain uncoalesced at the end 
   of that branch.

\begin{figure}
    \centering
    \includegraphics[width=0.5\linewidth]{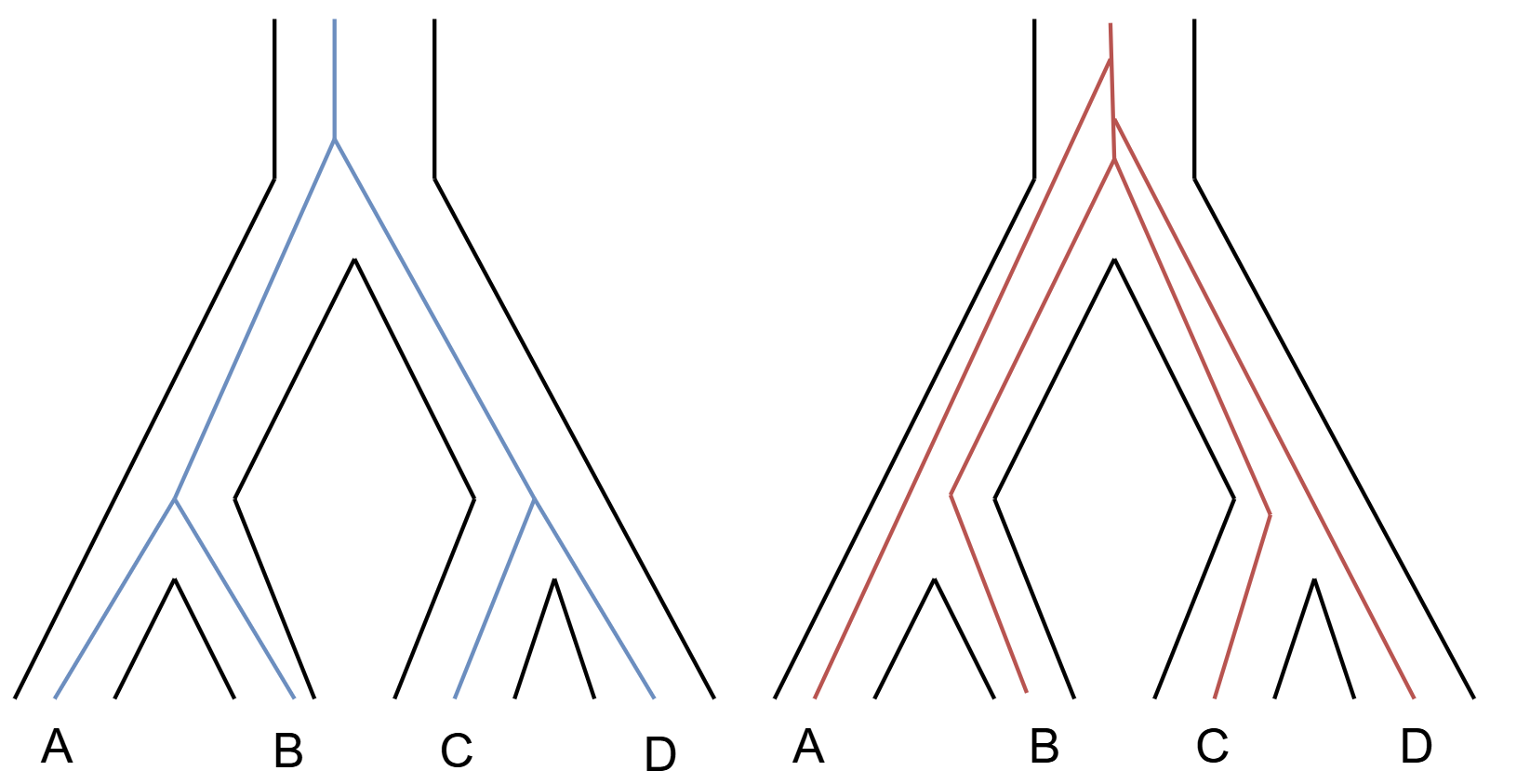}
    \caption{Two different gene trees generated from the same species tree under the multispecies coalescent model. The black lines outline the underlying species tree topology, and the colored lines indicate the gene lineages}
    \label{fig:msc_model}
\end{figure}

\subsection{The Original Bound and Main Results}
\label{sec:original-bound-main-results}

Under the MSC model, \citet{uricchio2016_bipartition_cover} show:

\begin{theorem}[Original Bipartition Cover upper bound] If we have a species tree with $k$ species and minimum branch length $T_{\mathrm{min}}$ (in coalescent units) and a set $\cG$ of at least
\begin{equation}
     M_o(k,T_{\mathrm{min}}) := \frac{\log\left(\frac{1-q}{k-3}\right)}{\log \left(1-g_{k-2, 1}(T_{\mathrm{min}})\right)}
\end{equation}
\noindent gene trees sampled independently under the MSC model, then $\cG$ forms a bipartition cover with probability at least $q$.
\label{thm:uricchios_bipartition_cover_bound}
\end{theorem}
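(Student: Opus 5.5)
Since $M_o$ does not depend on the topology, we prove a bound that holds uniformly over all unrooted binary species trees with $k$ leaves, and then apply a union bound over bipartitions. Such a tree has $k-3$ nontrivial bipartitions, one per internal edge. Trivial bipartitions (a single leaf against the rest) appear in every gene tree, so they can be ignored. Fix an internal edge $e$ with bipartition $A\mid B$, and let $p_e$ be the probability that a single gene tree drawn under the MSC contains $A\mid B$. By independence of the gene trees, the probability that none of $M$ gene trees contains $A\mid B$ is $(1-p_e)^M$. A union bound then gives
\[
\P(\cG \text{ is not a bipartition cover}) \leq \sum_{e} (1-p_e)^M \leq (k-3)\bigl(1-\min_e p_e\bigr)^M .
\]
It therefore suffices to show $p_e \geq g_{k-2,1}(T_{\mathrm{min}})$ for every internal edge. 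Given that, requiring $(k-3)(1-g_{k-2,1}(T_{\mathrm{min}}))^M \leq 1-q$ and taking logarithms yields $M \geq M_o(k,T_{\mathrm{min}})$. The direction of the inequality flips because $\log(1-g)<0$, and we assume $k\geq 4$ so that the quantity $k-3$ is positive.

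The key step is the lower bound on $p_e$. Root the species tree, for instance at some leaf or at the root of the rooted model, so that $e$ becomes a branch of length $T_e \geq T_{\mathrm{min}}$. One side, say $A$, is the clade below $e$. A sufficient condition for the gene tree to display $A\mid B$ is that every gene lineage sampled from $A$ coalesces into a single lineage on branch $e$ before reaching its top. Once the lineages from $A$ have coalesced among themselves, no lineage from $B$ can have merged with them, so $A$ forms a clade of the gene tree, and a rooted clade induces the corresponding unrooted bipartition.

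The number of lineages entering $e$ from below is some random $j$ with $1\leq j\leq |A|$. Given $j$, the conditional probability of complete coalescence on $e$ is $g_{j1}(T_e)$. We need $|A|\leq k-2$. The rooting can be chosen so that the clade below $e$ has at most $k-2$ leaves: in the unrooted tree both sides of an internal edge have at least $2$ leaves, so rooting at a leaf on the $B$ side makes $A$ the clade below $e$ with $|A| \leq k-2$. We then use two monotonicity facts about Kingman's coalescent. First, $g_{j1}(T)$ is nonincreasing in $j$, which follows from a coupling: starting with more lineages, the process reaches one lineage no sooner, since the absorption time is stochastically larger. Second, $g_{j1}(T)$ is nondecreasing in $T$. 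Together these give $p_e \geq \E[g_{J1}(T_e)] \geq g_{k-2,1}(T_{\mathrm{min}})$.

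The main obstacle is making this reduction rigorous within the MSC. Choosing the rooting is legitimate because the MSC gene tree distribution, viewed as unrooted, is often taken to be insensitive to root placement in the relevant sense. If one prefers to avoid that claim, we work with the given rooted species tree and handle the edge by taking $A$ to be the clade below $e$: since $e$ is not a root edge and is internal, the complement contains at least $2$ leaves, which gives $|A|\leq k-2$ directly. We must also verify the coupling monotonicity in $j$. The cleanest route writes the time to absorption as a sum of independent exponentials with rates $\binom{m}{2}$ for $m=2,\dots,j$; this sum is stochastically increasing in $j$, so $g_{j1}(T)=\P(\text{absorption time}\leq T)$ is decreasing in $j$ and increasing in $T$.
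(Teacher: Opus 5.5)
Your argument is essentially the paper's: a union bound over the $k-3$ nontrivial bipartitions, independence across loci, the sufficient event that every lineage of the clade below $e$ coalesces within $e$, and monotonicity of $g_{j1}(T)$ in $j$ and in $T$. Averaging over the random number $J$ of entering lineages is a harmless refinement, and the paper itself uses it later in Section~\ref{sec:coalescent-events-below-edge}. Your sum-of-exponentials proof of monotonicity is a clean alternative to Lemma~\ref{lem:cdf_of_Zi}. The fallback you describe, working in the given rooted species tree, is exactly the paper's route, and it is the one you should commit to.

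Two corrections are needed.

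First, drop the re-rooting justification entirely. The MSC is defined on the given rooted species tree, so you are not free to move the root, and ``rooting at a leaf'' does not even give a valid MSC species tree. The claim that the unrooted gene-tree distribution is insensitive to root placement is false for $k\geq 5$: Allman, Degnan and Rhodes (2011) showed that in that case the rooted species tree is identifiable from the distribution of unrooted gene-tree topologies.

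Second, in the fallback, ``since $e$ is not a root edge'' skips a case. When both children of the root are internal, the split between the two root subtrees is a nontrivial bipartition, and it is represented only by the root edges. The fix is immediate:
\begin{itemize}
\item take $A$ to be the clade below either root edge (the paper picks the one with fewer descendants);
\item its complement is the other root subtree, which has at least two leaves because the split is nontrivial, so $|A|\leq k-2$;
\item that root edge is a branch of length at least $T_{\mathrm{min}}$, so the same complete-coalescence argument gives $p_e\geq g_{k-2,1}(T_{\mathrm{min}})$.
\end{itemize}
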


\noindent In this work, we aim to develop a topology-free improvement of this upper bound. 
Our work relies on careful analysis of various ``worst-case'' topologies when it comes to phylogenetic inference. For this purpose, we define two tree types: a \textit{caterpillar tree} and a \textit{balanced tree}.

A \textit{balanced tree} is any rooted tree so that for every vertex the number of leaves in its two subtrees differs by at most one. A \textit{caterpillar tree} is a tree where every non-leaf vertex has one child who is a leaf. Figure \ref{fig:caterpillar_and_balanced_tree} provides a simple illustration. 

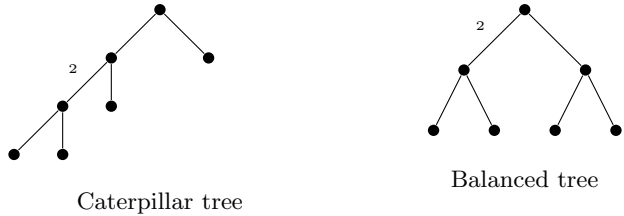
\begin{figure}[h!]
\centering
\begin{tikzpicture}[scale=0.8]

\begin{scope}[xshift=-3cm]
\node[circle, fill=black, inner sep=1.5pt] (root) at (0,0) {};
\node[circle, fill=black, inner sep=1.5pt] (n1) at (-0.8,-0.8) {};
\node[circle, fill=black, inner sep=1.5pt] (n2) at (0.8,-0.8) {};
\node[circle, fill=black, inner sep=1.5pt] (n3) at (-1.6,-1.6) {};
\node[circle, fill=black, inner sep=1.5pt] (n4) at (-0.8,-1.6) {};
\node[circle, fill=black, inner sep=1.5pt] (n5) at (-2.4,-2.4) {};
\node[circle, fill=black, inner sep=1.5pt] (n6) at (-1.6,-2.4) {};

\draw (root) -- (n1);
\draw (root) -- (n2);
\draw (n1) -- (n3) ;
\draw (n1) -- (n4);
\draw (n3) -- (n5);
\draw (n3) -- (n6);

\node at (0,-3.2) {\small Caterpillar tree};
\end{scope}

\begin{scope}[xshift=3cm]
\node[circle, fill=black, inner sep=1.5pt] (root) at (0,0) {};
\node[circle, fill=black, inner sep=1.5pt] (n1) at (-1,-1) {};
\node[circle, fill=black, inner sep=1.5pt] (n2) at (1,-1) {};
\node[circle, fill=black, inner sep=1.5pt] (n3) at (-1.5,-2) {};
\node[circle, fill=black, inner sep=1.5pt] (n4) at (-0.5,-2) {};
\node[circle, fill=black, inner sep=1.5pt] (n5) at (0.5,-2) {};
\node[circle, fill=black, inner sep=1.5pt] (n6) at (1.5,-2) {};

\draw (root) -- (n1) ;
\draw (root) -- (n2);
\draw (n1) -- (n3);
\draw (n1) -- (n4);
\draw (n2) -- (n5);
\draw (n2) -- (n6);

\node at (0,-2.8) {\small Balanced tree};
\end{scope}
\end{tikzpicture}

\caption{The two main extremal tree topologies: caterpillar and balanced trees}

\label{fig:caterpillar_and_balanced_tree}
\end{figure}


These two tree topologies represent opposite extremes for coalescent behavior: one maximally balanced and the other maximally unbalanced. Caterpillar trees present a combinatorial bottleneck for phylogenetic inference, as many of their bipartitions involve very large subsets of taxa. Balanced trees, however, present a more subtle and severe difficulty: they induce a \emph{coalescent bottleneck}, in which lineages are so evenly dispersed that coalescence is systematically delayed.

This distinction is made precise in Lemma~\ref{lem:caterpillar_maximize_increasing_sums}, which identifies caterpillar trees as extremal for increasing functions of descendant counts, and in Lemma~\ref{lem:balanced_is_worse_case}, which shows that balanced trees are stochastically worst-case for the number of surviving lineages under the multispecies coalescent. Together, these lemmas underpin our main result, which appears as Theorem~\ref{thm:improved_bipartition_cover_bound_balanced} below:

\begin{theorem*}[Main result]
For $\ell\geq 2$, let $p_\ell(T)$ denote the probability that the lineages descending 
from a balanced $\ell$-taxon tree, with all branch lengths equal to $T$, coalesce to a single 
lineage. Then $n$ independently sampled gene trees form a bipartition cover with probability at least $q$ whenever
\[
    \sum_{\ell=2}^{k-2}\bigl(1-p_\ell(T_{\min})\bigr)^n\leq 1-q.
\]
In particular, it suffices that
\[
    n\geq
    \frac{\log\!\left(\frac{k-3}{1-q}\right)}
         {-\log\!\left(1-p_{k-2}(T_{\min})\right)}.
\]
\end{theorem*}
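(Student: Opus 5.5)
The plan is a union bound over the nontrivial bipartitions (clades) of the species tree, followed by a worst-case comparison that replaces each clade by a balanced tree with all branch lengths $T_{\min}$.

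\textbf{Step 1: reduce to single clades.} A rooted binary species tree on $k$ taxa has $k-2$ nontrivial internal clades (excluding the root and the leaves). The two children of the root induce the same bipartition, so at most $k-3$ distinct nontrivial bipartitions need to be covered. A clade $A$ with $|A|=\ell$ and $2\le \ell\le k-2$ appears as a bipartition of a gene tree whenever all lineages sampled from $A$ coalesce to a single lineage inside the subtree rooted at $A$, before reaching the top of the branch above $A$. Counting the branch above $A$ together with the internal branches of $A$'s subtree, this event involves an $\ell$-taxon subtree plus a parent edge. Hence
\[
\P(\text{clade } A \text{ missing from all } n \text{ gene trees}) = (1-r_A)^n,
\]
where $r_A$ is the probability that $A$'s lineages coalesce to one by the end of its parent branch. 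A union bound over clades then gives failure probability at most $\sum_A (1-r_A)^n$.

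\textbf{Step 2: compare each clade to the balanced worst case.} I would show $r_A \ge p_\ell(T_{\min})$ whenever $|A|=\ell$. Two monotonicity facts do this.
\begin{enumerate}[label=(\roman*)]
\item Shrinking every branch length down to $T_{\min}$ can only decrease the coalescence probability. Coalescence along a branch is a monotone (coupled) pure-death process: a lineage count can be coupled so that longer times yield fewer lineages, and the subsequent branches are monotone in their input counts.
\item Among $\ell$-taxon topologies with all branches equal to $T_{\min}$, the number of surviving lineages at the top of the clade's parent edge is stochastically largest for the balanced tree. This is Lemma~\ref{lem:balanced_is_worse_case}.
\end{enumerate}
Together these give $r_A \ge p_\ell(T_{\min})$. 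I must also check that $p_\ell$ is defined to include the parent edge of the clade; if it is not, the extra edge only helps, so the inequality still holds.

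\textbf{Step 3: assemble the bound.} With Step 2 in hand, I would need to argue that the multiset of clade sizes can be matched injectively into $\{2,\dots,k-2\}$, to obtain the stated sum $\sum_{\ell=2}^{k-2}(1-p_\ell)^n$. This matching is delicate, because clade sizes may repeat; for example, a balanced tree has many clades of size $2$. I would instead try to show that $p_\ell$ is decreasing in $\ell$, which follows from embedding a smaller balanced tree as a sub-process, and use Lemma~\ref{lem:caterpillar_maximize_increasing_sums}. Since $\ell\mapsto (1-p_\ell)^n$ is increasing, the sum of this increasing function of clade sizes is maximized by the caterpillar, whose nontrivial clades have sizes exactly $2,3,\dots,k-2$ (after merging the root split). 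This yields the displayed sufficient condition. The corollary then follows from $(1-p_\ell)^n\le(1-p_{k-2})^n$ for every $\ell$, which bounds the sum by $(k-3)(1-p_{k-2})^n\le 1-q$; solving for $n$ gives the stated threshold.

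\textbf{Main obstacle.} I expect the hardest part to be Step 2(ii) combined with monotonicity in $\ell$. In particular, I need the stochastic domination to pass through the clade's parent branch and to be monotone in the tree size. I would first attempt both by a coupling argument on lineage counts, using that $g_{ij}$ defines a stochastically monotone Markov kernel in $i$.
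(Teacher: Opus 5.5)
Your proposal is correct and follows essentially the paper's route. The shared steps are the union bound with independence, the per-bipartition bound $\P(E_{i,1})\geq \E g_{X_{e_i},1}(T_{\min})\geq w_{k_i}$ (Lemma~\ref{lem:balanced_is_worse_case} pushed through the parent edge by Corollary~\ref{corollary:stochastic_dominance_composition}), monotonicity of $w_\ell$ in $\ell$, and Lemma~\ref{lem:caterpillar_maximize_increasing_sums}. The only difference is how you get monotonicity: the paper inducts on $U_\ell \overset{d}{=} Z_{U_{\lfloor \ell/2\rfloor}}+Z'_{U_{\lceil \ell/2\rceil}}$, while you embed a smaller balanced tree as a subtree, and both work. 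Note that Lemma~\ref{lem:caterpillar_maximize_increasing_sums} is exactly the injective matching of clade sizes into $\{2,\dots,k-2\}$ you were worried about, not an alternative to it.

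One small fix: Step~1 should read $\P(\text{clade } A \text{ missing from all } n \text{ gene trees})\leq(1-r_A)^n$ rather than an equality. The clade can also appear when its last lineages coalesce above the parent branch before meeting outside lineages. Only this upper bound is needed, so nothing else changes.
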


\noindent Due to the recursive structure of a balanced tree, these probabilities $p_\ell$ are easy 
to compute. Empirically, we observe this improves over Theorem \ref{thm:uricchios_bipartition_cover_bound} by
several orders of magnitude in some regimes, especially when the minimum branch length $T_{\mathrm{min}}$ is small. Moreover, through careful asymptotic analysis of the coalescent 
probabilities, we show that, in the fixed-$T$, large-$k$ regime, the improvement factor is 
bounded below asymptotically by $\pi^2/(2T)$ as $T \downarrow 0$.
 Lemma \ref{lem:improvement_ratio_asymptotics} below makes this precise. 

\section{Results and Discussion}
\label{sec:results-discussion}

To guide our work, we outline the proof of the original bound, and identify areas where it is lossy.

\begin{proof}[Proof Outline of Theorem \ref{thm:uricchios_bipartition_cover_bound}]
Every tree contains the trivial bipartitions $\{x\} | (S - \{x\})$, so we may restrict attention to nontrivial bipartitions. There are $k-3$ nontrivial bipartitions in a species tree: one for each internal edge\footnote{There's a slight subtlety: there are $k-2$ internal edges, but the two edges adjacent to the root induce the same, possibly trivial, bipartition}.
\begin{enumerate}
    \item By union bound we can just bound the probability a specific species tree bipartition occurs in the gene trees.

    \item  By independence of gene trees we can reduce to the case of a single gene tree. 

    \item Suppose $\phi_e$ is the bipartition associated to edge $e$ in a species tree. One way $\phi_e$ can occur in the gene tree is if all lineages below $e$ coalesce by the time they exit $e$. If there are $k_e$ such lineages, we can lower bound this probability by the probability $k_e$ lineages coalesce down to one along just the single edge $e$. 

    \item The worst-case for the previous step is if edge $e$  is as short as possible (length $T_{\mathrm{min}})$ and the number of lineages that have to coalesce is as large as possible $(k_{e} = k-2$ since $\phi_e$ is nontrivial). 
\end{enumerate}

If we let $E_n$ be the event all bipartitions occur in $n$ gene trees and $E_{i,n}$ be the event bipartition $\phi_i$ occurs in at least one of the $n$ gene trees, then combining the above gives

\begin{align*}
    \P(E_n) & \geq 1 - \sum_{i=1}^{k-3} (1-\P(E_{i,n}))\\
    &  \geq 1 - (k-3)  \cdot \max_i (1-\P(E_{i,n})) \\
    & = 1 - (k-3) (1- \min_i \P(E_{i,1}))^n \\
    & \geq  1 - (k-3) (1- g_{k-2, 1}(T_{\mathrm{min}}))^n
\end{align*}

\noindent from which we can invert to find the bound on $n$.
\end{proof}

\subsection{Bookkeeping of Descendant Counts}
\label{sec:first-improvement-descendant-counts}

One step at which the proof of Theorem \ref{thm:uricchios_bipartition_cover_bound} is lossy is that we essentially bound

\[\P(E_{i,1}) \geq g_{\alpha_{i}, 1}(T_{e_i}) \geq g_{k-2, 1}(T_{\mathrm{min}})\]

\noindent where $\alpha_{i}$ is the size of the part of bipartition $\phi_i$ that is ``below the root''. Namely, if $\phi_i$ is generated by cutting edge $e_i$, then $\alpha_{i}$ is the number of leaves below $e_i$, in the part not containing the root. Since both edges adjacent to the root generate the same bipartition, there is some ambiguity in the definition of our descendant count set $\{\alpha_i\}$. To resolve this, we make the decision to always choose the edge adjacent to the root with the least descendants. An example is given in Figure \ref{fig:example_descendants_alpha_i}.

\begin{figure}[h]
\centering
\begin{tikzpicture}[scale=0.7]

\node[circle, fill=black, inner sep=1.5pt] (root) at (0,0) {};
\node[circle, fill=black, inner sep=1.5pt] (n1) at (-2,-1.2) {};
\node[circle, fill=black, inner sep=1.5pt] (n2) at (2,-1.2) {};
\node[circle, fill=black, inner sep=1.5pt] (n3) at (-3,-2.4) {};
\node[circle, fill=black, inner sep=1.5pt] (n4) at (-1,-2.4) {};
\node[circle, fill=black, inner sep=1.5pt] (n5) at (1,-2.4) {};
\node[circle, fill=black, inner sep=1.5pt] (n6) at (3,-2.4) {};
\node[circle, fill=black, inner sep=1.5pt] (n7) at (-3.5,-3.6) {};
\node[circle, fill=black, inner sep=1.5pt] (n8) at (-2.5,-3.6) {};
\node[circle, fill=black, inner sep=1.5pt] (n9) at (-1.5,-3.6) {};
\node[circle, fill=black, inner sep=1.5pt] (n10) at (-0.5,-3.6) {};
\node[circle, fill=black, inner sep=1.5pt] (n11) at (0.5,-3.6) {};
\node[circle, fill=black, inner sep=1.5pt] (n12) at (1.5,-3.6) {};

\draw (root) -- (n1) node[midway, above left] {\tiny 4};
\draw (root) -- (n2) node[midway, above right] {\tiny 3};
\draw (n1) -- (n3) node[midway, above left] {\tiny 2};
\draw (n1) -- (n4) node[midway, above right] {\tiny 2};
\draw (n2) -- (n5) node[midway, above left] {\tiny 2};
\draw (n2) -- (n6);
\draw (n3) -- (n7);
\draw (n3) -- (n8);
\draw (n4) -- (n9);
\draw (n4) -- (n10);
\draw (n5) -- (n11);
\draw (n5) -- (n12);
\end{tikzpicture}
\caption{Example of the descendant counts $\{\alpha_i\}$ associated to the nontrivial bipartitions/edges of a phylogenetic tree. Note there is some subjectivity in deciding which edge adjacent to the root to take: depending on our choice our descendant counts would either be $\{\alpha_i\} = \{4,2,2,2\}$ or $\{\alpha_i\} = \{3,2,2,2\}$. To remove ambiguity, we always choose the edge with the least descendants.}
\label{fig:example_descendants_alpha_i}
\end{figure}
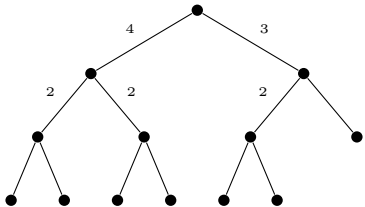

For most edges $e_i$, the number of descendants $\alpha_{i}$ is far smaller than the total number of species $k$. 
By Lemma \ref{lem:cdf_of_Zi}, we know $\ell \mapsto g_{\ell 1}(T_{\mathrm{min}})$ is a 
decreasing function of $\ell$: the more lineages we start with, the harder it is to coalesce down to one.
As a result, this second approximation $g_{\alpha_{i}, 1} \geq g_{k-2, 1}$ can be quite lossy. Nonetheless, to derive 
a topology-free bound we must assume these $\alpha_i$ are in a sense as large
as possible. The following makes this precise.

\begin{lemma}[Caterpillars Maximize Increasing Sums]
Suppose $\{\alpha_{i}\}_{i=1}^{k-3}$ are the descendant counts of our rooted binary tree $T$. Moreover, suppose $f: \N \to \R$ is nondecreasing. Then $\sum_i f(\alpha_{i})$ is maximized when $T$ is the caterpillar tree, in which case $\{\alpha_{i}\} = \{2,3,..., k-2\}$. 

\label{lem:caterpillar_maximize_increasing_sums}
\end{lemma}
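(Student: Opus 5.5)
We reduce the statement to a majorization-type claim: after sorting, the multiset of descendant counts of any rooted binary tree on $k$ leaves is dominated termwise by $\{2,3,\dots,k-2\}$. Concretely, sort the counts as $\alpha_{(1)}\leq \alpha_{(2)}\leq \dots \leq \alpha_{(k-3)}$. We will show $\alpha_{(j)}\leq j+1$ for every $j$. Since $f$ is nondecreasing, this gives $f(\alpha_{(j)})\leq f(j+1)$ termwise, and summing yields $\sum_i f(\alpha_i)\leq \sum_{m=2}^{k-2} f(m)$. Separately we will check that the caterpillar attains $\{\alpha_i\}=\{2,\dots,k-2\}$. Its internal edges along the spine have $2,3,\dots,k-1$ descendants. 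At the root, the two edges carry $k-1$ and $1$ descendants, so the pair represents the trivial bipartition and is discarded. This leaves exactly $\{2,\dots,k-2\}$, and hence the caterpillar attains the maximum.

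The key step is the domination $\alpha_{(j)}\leq j+1$. It is equivalent to the statement that for each $m$, at most $m-1$ of the counts are $\geq m+1$... it is cleaner, though, to argue by nesting. For any threshold $m\geq 2$, consider the set $E_m$ of retained edges whose counts are $\geq m$. We claim $|E_m|\leq k-1-m$; this is exactly the statement that at least $j$ counts are $\leq j+1$, i.e. that the $j$-th smallest is $\leq j+1$.

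To prove the claim, use clusters. Each retained edge corresponds to a cluster $C_e$ (its leaf set below $e$). These clusters form a laminar family, and they exclude the full leaf set and exactly one of the two root children. Choose a cluster in $E_m$ that is minimal under inclusion. Every other cluster in $E_m$ is either disjoint from it or contains it. I would then argue by induction on $k$ via contraction: replace a cherry (two sibling leaves) by a single leaf. This operation decreases every count above the cherry by one and deletes the edge count $2$ of the cherry's parent edge. Under it, the multiset for $k$ leaves becomes a multiset for $k-1$ leaves with one element removed and the others shifted down by at most one. The induction then gives, for the sorted lists, $\alpha^{(k)}_{(j)}\leq \alpha^{(k-1)}_{(j-1)}+1\leq j+1$.

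The main obstacle is the root convention. The chosen root edge is the one with fewer descendants, so it has at most $\lfloor k/2\rfloor$ descendants. In addition, a contracted cherry may sit directly under the root. Both situations need a small case check so that the inductive shift remains valid. Alternatively, I would fall back on a direct counting argument: distinct retained edges with count $\geq m$ lie on a chain or disjoint subtrees, and disjoint subtrees each of size $\geq m$ use up at least $m$ leaves each. Together with the sizes strictly increasing along a chain and being bounded by $k-2$, this gives $|E_m|\leq k-1-m$.
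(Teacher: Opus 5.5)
Your proof is correct, but it runs a different induction from the paper's.

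\textbf{The paper's route.} The paper splits at the root. If one root child is a leaf, it applies the hypothesis to the other $(k-1)$-leaf subtree and puts the single leftover edge in slot $k-3$, with bound $k-2$. Otherwise it applies the hypothesis to both subtrees and concatenates their sorted lists. It then places the three leftover edges in slots $k-5,k-4,k-3$. These are the root representative of $T$ and, for each subtree, the root edge that the subtree's own convention had discarded. This uses the fact that the two subtrees cannot both have $k-2$ leaves.

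\textbf{Your route.} You remove one leaf per step by contracting a cherry. This shows that the $k$-leaf multiset is a $(k-1)$-leaf multiset with entries raised by at most one, plus an inserted $2$. Since $2$ is the smallest possible count, $\alpha^{(k)}_{(1)}=2$, and for $j\ge 2$ you get $\alpha^{(k)}_{(j)}\le\alpha^{(k-1)}_{(j-1)}+1\le j+1$. The base case is $k=4$, where the multiset is $\{2\}$.

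\textbf{The root check you postponed.} It does go through, in two cases.
\begin{itemize}
\item If the cherry's parent $p$ is not a root child, the root children $u_1,u_2$ keep their leaf/internal status, so whether the root split is trivial does not change. The representative value $\min(n(u_1),n(u_2))$ drops by at most one, whichever child ends up smaller.
\item If $p$ is a root child, then for $k\ge 5$ it is the smaller one, so its count $2$ was the root representative. After contraction the root split becomes trivial, so exactly that $2$ disappears and no other count changes.
\end{itemize}

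\textbf{What each route buys.} Yours gives one uniform step and an explicit description of how the multiset grows with $k$. It avoids the concatenation and top-slot assignment, which need separate care when a subtree has only two or three leaves. The paper's route handles the root convention directly rather than checking it afterwards, and it matches the root-split recursion used elsewhere in the paper.

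\textbf{Minor points.}
\begin{itemize}
\item Your first reformulation (``at most $m-1$ counts are $\ge m+1$'') is wrong. The version $|E_m|\le k-1-m$ that you then state is the correct one.
\item The fallback counting sketch is not yet a proof. It needs a branching estimate: every $v$ in the upward-closed set $\{v: n(v)\ge m\}$ has at most $n(v)-m+1$ members of that set in its subtree, which holds because $m\ge 2$. It is not needed once the contraction argument is in place.
\end{itemize}
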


\noindent The proof of Lemma \ref{lem:caterpillar_maximize_increasing_sums} can be found 
in Appendix~\ref{app:caterpillar-maximization-proof}. 
This result implies:

\begin{corollary}
Suppose $h: \N \to [0,1]$ is decreasing and $\P(E_{i,1}) \geq h(k_i)$. Then
\[\P(E_n) \geq 1 - \sum_{\ell = 2}^{k-2} \Big[1 - h(\ell)\Big]^n.\]
\label{corr:bipartitions_corollary}    
\end{corollary}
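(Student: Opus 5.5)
The plan is to rerun the first two steps of the proof outline of Theorem~\ref{thm:uricchios_bipartition_cover_bound}, but to keep each bipartition's descendant count $\alpha_i$ (written $k_i$ in the statement) instead of replacing it by the worst case $k-2$. The sum this produces has the form $\sum_i f(\alpha_i)$ with $f$ nondecreasing, so Lemma~\ref{lem:caterpillar_maximize_increasing_sums} controls it.

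By the union bound over the $k-3$ nontrivial bipartitions, exactly as in the outline,
\[
\P(E_n) \geq 1 - \sum_{i=1}^{k-3}\bigl(1-\P(E_{i,n})\bigr).
\]
The gene trees are independent, and $E_{i,n}^c$ is the event that none of the $n$ trees displays $\phi_i$. Hence
\[
1-\P(E_{i,n}) = \bigl(1-\P(E_{i,1})\bigr)^n \leq \bigl(1-h(\alpha_i)\bigr)^n,
\]
where the inequality uses the hypothesis $\P(E_{i,1})\geq h(\alpha_i)$. It follows that
\[
\P(E_n)\geq 1-\sum_{i=1}^{k-3} f(\alpha_i), \qquad f(\ell):=\bigl(1-h(\ell)\bigr)^n .
\]

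Next I check that $f$ is nondecreasing. Since $h$ is decreasing and takes values in $[0,1]$, the quantity $1-h(\ell)$ is nondecreasing and lies in $[0,1]$. The map $x\mapsto x^n$ is nondecreasing on $[0,1]$, so $f$ is nondecreasing on $\N$. Lemma~\ref{lem:caterpillar_maximize_increasing_sums} then says that $\sum_i f(\alpha_i)$ is largest for the caterpillar tree. For the caterpillar the descendant counts are $\{2,3,\dots,k-2\}$, which gives
\[
\sum_{i=1}^{k-3} f(\alpha_i)\leq \sum_{\ell=2}^{k-2}\bigl(1-h(\ell)\bigr)^n ,
\]
and substituting this into the previous display proves the corollary.

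The main point needing care is that the descendant counts appearing in the hypothesis $\P(E_{i,1})\geq h(k_i)$ follow the same convention as in Lemma~\ref{lem:caterpillar_maximize_increasing_sums}. That convention counts one edge per nontrivial bipartition and, for the bipartition at the root, uses the root edge with fewer descendants. I will read $k_i$ as $\alpha_i$ under this convention, so that both the index set of size $k-3$ and the multiset of counts agree with those in the lemma. Apart from this, the argument is routine.
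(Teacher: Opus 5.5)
Your proposal is correct and follows essentially the same argument as the paper. The union bound and independence give $1-\P(E_n)\leq\sum_{i=1}^{k-3}[1-h(k_i)]^n$, and Lemma~\ref{lem:caterpillar_maximize_increasing_sums} applied to the nondecreasing function $f(x)=[1-h(x)]^n$ replaces this with the caterpillar sum $\sum_{\ell=2}^{k-2}[1-h(\ell)]^n$; your remark that $k_i$ should be read as the descendant counts $\alpha_i$ under the lemma's root-edge convention makes explicit a point the paper leaves implicit.
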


\begin{proof}[Proof of \ref{corr:bipartitions_corollary}]
    By union bound and independence of the gene trees

    \[1-\P(E_n)  \leq  \sum_{i=1}^{k-3} (1-\P(E_{i,n})) =  \sum_{i=1}^{k-3} \Big[1-\P(E_{i,1})\Big]^n \leq  \sum_{i=1}^{k-3} \Big[1-h(k_i)\Big]^n. \]
\noindent  As $h$ is decreasing and bounded in $[0,1]$, the function $f(x) = [1-h(x)]^n$ is increasing. Applying Lemma \ref{lem:caterpillar_maximize_increasing_sums} to this $f$ gives the desired result.
\end{proof}

\noindent From this, the main bound follows shortly.

\begin{theorem}
    Suppose $h: \N \to [0,1]$ is a decreasing function and $\P(E_{i,1}) \geq h(k_i)$. If we have a species tree with $k$ species and a set $\cG$ of at least
\begin{equation}
    n_h := \inf\left\{n \in \N : \sum_{\ell=2}^{k-2}\left[1-h(\ell)\right]^n \leq 1-q\right\}
\end{equation}
\noindent gene trees sampled independently under the MSC model, then $\cG$ forms a bipartition cover of the species tree with probability at least $q$.
\label{thm:caterpillar_bound}
\end{theorem}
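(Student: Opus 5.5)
The plan is to read Theorem~\ref{thm:caterpillar_bound} off Corollary~\ref{corr:bipartitions_corollary}, checking monotonicity in $n$ so that the bound holds for every sample size at least $n_h$, not only at $n_h$.

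First I will fix the standing hypotheses: $h:\N\to[0,1]$ is decreasing, and for every nontrivial bipartition $\phi_i$ we have $\P(E_{i,1})\geq h(k_i)$, where $k_i=\alpha_i$ is its descendant count. Corollary~\ref{corr:bipartitions_corollary} then gives, for every $n\in\N$,
\[
\P(E_n)\geq 1-\sum_{\ell=2}^{k-2}\bigl[1-h(\ell)\bigr]^n .
\]
Next I will note that each term satisfies $0\leq 1-h(\ell)\leq 1$. Hence $n\mapsto[1-h(\ell)]^n$ is nonincreasing, and so is the whole sum $S(n):=\sum_{\ell=2}^{k-2}[1-h(\ell)]^n$.

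Now take $|\cG|=n\geq n_h$, and assume the infimum defining $n_h$ is attained, i.e.\ the set is nonempty, so $n_h$ is its minimum since it is a subset of $\N$. Then $S(n)\leq S(n_h)\leq 1-q$, and therefore $\P(E_n)\geq q$. Since $E_n$ is exactly the event that all $k-3$ nontrivial bipartitions appear among the gene trees, and trivial bipartitions appear in every gene tree, $E_n$ is the bipartition-cover event. This proves the claim.

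Two edge cases need a sentence each. If the set defining $n_h$ is empty, then $n_h=\infty$ and the statement is vacuous. This can happen only when some $h(\ell)=0$ and $q$ is large; in particular it does not occur if $h>0$ and $q<1$. If $k\leq 3$, the sum is empty and the result is trivial.

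There is no real obstacle; the substantive work sits in Lemma~\ref{lem:caterpillar_maximize_increasing_sums}. The only points needing care are the monotonicity of $S(n)$ in $n$, which is what lets the bound extend from $n_h$ to all larger $n$, and matching the hypothesis $\P(E_{i,1})\geq h(k_i)$ with the descendant counts $\alpha_i$ used in the Corollary.
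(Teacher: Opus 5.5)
Your proposal is correct and follows the paper's route: the theorem is read off Corollary~\ref{corr:bipartitions_corollary} together with the definition of $n_h$, and your check that $n\mapsto\sum_{\ell=2}^{k-2}[1-h(\ell)]^n$ is nonincreasing (because $1-h(\ell)\in[0,1]$) is exactly the step the paper leaves implicit when it extends the bound from $n_h$ to every larger sample size. One small correction to an aside: the set defining $n_h$ can also be empty when $q=1$ and some $h(\ell)<1$, not only when some $h(\ell)=0$; since you treat the empty case as vacuous anyway, this does not affect the argument.
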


\noindent Note that if $h$ is independent of the species tree topology, 
then so is the bound in Theorem~\ref{thm:caterpillar_bound}. If we let 
$h(x) = g_{x,1}(T_{\mathrm{min}}) \in [0,1]$, which is decreasing by Corollary~\ref{corr:g_i1_is_decreasing},
 then Theorem~\ref{thm:caterpillar_bound} and the original bound $\P(E_{i,1}) \geq g_{k_i,1}(T_{\mathrm{min}})$ 
 of \citet{uricchio2016_bipartition_cover} give a new topology-free bound.

\begin{corollary}[Caterpillar Bipartition Cover Bound]
If we have a species tree with $k$ species and a set $\cG$ of at least
\begin{equation}
    M_c(k, T_{\mathrm{min}}):=  \inf\left\{n \in \N : \sum_{\ell=2}^{k-2}\left[1-g_{\ell,1 }(T_{\mathrm{min}})\right]^n \leq 1-q\right\}
    \label{eqn:caterpillar_bound_eqn}
\end{equation}
\noindent gene trees sampled independently under the MSC model, then $\cG$ forms a bipartition cover of the species tree with probability at least $q$. Here $T_{\mathrm{min}}$ is the minimum branch length (in coalescent units) in the species tree.
\label{cor:improved_bipartition_cover_bound_caterpillar}
\end{corollary}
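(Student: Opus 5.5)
The plan is to obtain the Corollary directly from Theorem~\ref{thm:caterpillar_bound} by choosing $h(x) = g_{x,1}(T_{\mathrm{min}})$. Three things need to be checked: that this $h$ maps into $[0,1]$, that it is decreasing, and that $\P(E_{i,1}) \geq h(k_i)$ holds for every nontrivial bipartition $\phi_i$. Once these hold, $n_h$ coincides with $M_c(k,T_{\mathrm{min}})$ in \eqref{eqn:caterpillar_bound_eqn}, and the conclusion is exactly that of Theorem~\ref{thm:caterpillar_bound}.

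The range condition is immediate, since $g_{x,1}(T)$ is a probability. Monotonicity is Corollary~\ref{corr:g_i1_is_decreasing}, equivalently Lemma~\ref{lem:cdf_of_Zi}, which I take as given. If I wanted a direct argument, I would use a coupling. Run Kingman's coalescent on $\ell+1$ lineages and ignore one of them. The remaining $\ell$ lineages then evolve as Kingman's coalescent on $\ell$ lineages, and whenever all $\ell+1$ have merged to one by time $T$, so have the $\ell$. This gives $g_{\ell+1,1}(T)\leq g_{\ell,1}(T)$.

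The substantive step is the lower bound $\P(E_{i,1}) \geq g_{k_i,1}(T_{\mathrm{min}})$, where $k_i=\alpha_i$ is the number of leaves below the chosen edge $e_i$. This is step 3 of the proof outline of Theorem~\ref{thm:uricchios_bipartition_cover_bound}, and I would make it precise as follows.

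First, suppose all gene lineages from the $\alpha_i$ leaves below $e_i$ have coalesced into a single lineage by the top of $e_i$. Under the MSC, no lineage outside the clade can enter that subtree below the top of $e_i$. So the gene tree contains a clade with exactly those $\alpha_i$ taxa, and therefore displays $\phi_i$.

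Second, let $m\leq\alpha_i$ be the number of lineages entering $e_i$ from below. Conditional on $m$, the chance of coalescing to one lineage along $e_i$ is $g_{m,1}(T_{e_i})$. By monotonicity in the starting count this is at least $g_{\alpha_i,1}(T_{e_i})$, and averaging over $m$ preserves the bound.

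Third, $g_{\alpha_i,1}(T)$ is nondecreasing in $T$, because the number of surviving lineages only decreases over time and one is absorbing. Since $T_{e_i}\geq T_{\mathrm{min}}$, this gives $g_{\alpha_i,1}(T_{e_i})\geq g_{\alpha_i,1}(T_{\mathrm{min}})$.

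For the root edges I would use the convention that $e_i$ is the root-adjacent edge with fewer descendants. Either root-adjacent edge induces the same bipartition, so the argument applies to the chosen one.

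I expect the only real obstacle to be the bookkeeping needed to connect the descendant counts $\alpha_i$ with the $\{2,\dots,k-2\}$ indexing in Theorem~\ref{thm:caterpillar_bound}. That connection is supplied by Lemma~\ref{lem:caterpillar_maximize_increasing_sums} through Corollary~\ref{corr:bipartitions_corollary}, both of which are already established. The remaining point to confirm is that every nontrivial $\alpha_i$ lies in $[2,k-2]$, so that $h$ is evaluated only where the caterpillar comparison applies.
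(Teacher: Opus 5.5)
Your proposal is correct and follows the paper's argument. The paper obtains the corollary by substituting $h(x)=g_{x,1}(T_{\mathrm{min}})$ into Theorem~\ref{thm:caterpillar_bound}, citing Corollary~\ref{corr:g_i1_is_decreasing} for monotonicity and the original bound $\P(E_{i,1})\geq g_{k_i,1}(T_{\mathrm{min}})$ of \citet{uricchio2016_bipartition_cover}; your three-step justification of that bound (coalescence to one lineage implies the clade, then conditioning on $m\leq\alpha_i$ entering lineages, then monotonicity in $T$) just makes explicit what the paper takes from its proof outline.
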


\noindent As with the original bound~\ref{thm:uricchios_bipartition_cover_bound}, Equation~\eqref{eqn:caterpillar_bound_eqn} 
depends on the species tree only through $k$ and $T_{\mathrm{min}}$. Heuristically, this just 
allows us to replace the worst-case $g_{k-2, 1}(T_{\mathrm{min}})$ with more of an 
average case across all of $\{g_{\ell, 1}(T_{\mathrm{min}})\}_{\ell = 2}^{k-2}.$ 

\subsection{Accounting for Coalescent Events Below an Edge}
\label{sec:coalescent-events-below-edge}

\subsubsection{Local balancing} Theorem \ref{thm:caterpillar_bound} suggests that one avenue for improvement is to
obtain a tighter bound $\P(E_{i,1}) \geq h(k_i)$ than the one obtained by choosing $h(x) = g_{x,1}(T_{\mathrm{min}})$.
For small values of $T_{\mathrm{min}}$ the bound in Theorem \ref{cor:improved_bipartition_cover_bound_caterpillar}
is dominated by the terms $g_{k_i,1}$ for which $k_i$ are quite large. These are the bipartitions
 corresponding to edges $e$ close to the root of the tree which have many descendants. In \citet{uricchio2016_bipartition_cover}, 
 the authors essentially assume \textit{none} of the descendants coalesce before they reach edge $e$. This is conservative, especially when $e$ is far from the leaves:
  in this setting, descendants have a long distance to travel through the tree and potentially coalesce.

Hence to improve this bound we should try to account for coalescent events that occur below the edge $e$. Namely, we ought to somehow replace $g_{k_e,1}(T_{\mathrm{min}})$ by $\E g_{X_e, 1}(T_{\mathrm{min}})$ where $X_e$ is the (random) number of remaining lineages entering edge $e$. Clearly $\P(E_{i,1}) \geq \E g_{X_{e_i}, 1}(T_{\mathrm{min}})$ by the same logic that Theorem \ref{thm:uricchios_bipartition_cover_bound} employs. However, $X_e$ will of course depend on the topology of the species tree below the edge $e$. Hence if we hope to develop a topology-free bound, we must somehow develop a notion of the worst-case topology in this setting. 

Recall that random variable $X$ is said to \textit{first-order stochastically dominate} $Y$, and denote this by $X \geq_{st} Y$, 
if $\P(X > t) \geq \P(Y > t)$ for all $t$, or equivalently if $\E u(X) \geq \E u(Y)$ for all nondecreasing 
functions $u$. The random variable $X$ is said to \textit{second-order stochastically dominate} 
$Y$ if $\E u(X) \geq \E u(Y)$ for all nondecreasing concave functions, and we denote this
 by $X \geq_{sst} Y$. First-order stochastic dominance naturally implies second-order.

 Since $x \mapsto g_{x,1}(T)$ is decreasing, convex (Corollary \ref{corr:g_i1_is_decreasing}, Lemma \ref{lem:g_i1_is_convex}), we immediately get
\begin{lemma}
If $U_{e_i} \geq_{sst} X_{e_i}$ then $\P(E_{i,1}) \geq \E g_{U_{e_i}, 1}(T_{\mathrm{min}})$. 
\label{lem:bounding_prob_with_second_order_stochastic_dominance}
\end{lemma}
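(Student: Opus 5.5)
The argument has two inequalities: a conditioning step giving $\P(E_{i,1}) \geq \E g_{X_{e_i},1}(T_{\mathrm{min}})$, and a stochastic-dominance step replacing $X_{e_i}$ by $U_{e_i}$. Throughout, write $e=e_i$.

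\textbf{Step 1: conditioning.} Let $X_e$ be the number of lineages entering edge $e$ from below. Condition on the coalescent history strictly below $e$, which determines $X_e$. Given $X_e = x$, the lineages on $e$ evolve as Kingman's coalescent for time $T_e$. This holds by the Markov property of the MSC: coalescence on $e$ depends only on the lineages present at the bottom of $e$.

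Now suppose all $x$ lineages coalesce to one before leaving $e$. Then the gene tree has a clade consisting exactly of the taxa below $e$, so bipartition $\phi_i$ appears. Hence
\[
\P(E_{i,1}\mid X_e = x) \geq g_{x,1}(T_e).
\]
Since $T \mapsto g_{x,1}(T)$ is nondecreasing, and $T_e \geq T_{\mathrm{min}}$, this is at least $g_{x,1}(T_{\mathrm{min}})$. Monotonicity in $T$ holds because the event that the lineages have coalesced to one only grows with time. Averaging over $X_e$ gives
\[
\P(E_{i,1}) \geq \E g_{X_e,1}(T_{\mathrm{min}}).
\]

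One point needs care in this step: for the root-adjacent edge chosen by convention, the bipartition is still realized when that edge's descendants coalesce to one. So the same bound applies there.

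\textbf{Step 2: stochastic dominance.} Set $u(x) := -g_{x,1}(T_{\mathrm{min}})$. By Corollary~\ref{corr:g_i1_is_decreasing} and Lemma~\ref{lem:g_i1_is_convex}, $x\mapsto g_{x,1}(T_{\mathrm{min}})$ is decreasing and convex, so $u$ is nondecreasing and concave. The hypothesis $U_e \geq_{sst} X_e$ then gives $\E u(U_e) \geq \E u(X_e)$, that is,
\[
\E g_{U_e,1}(T_{\mathrm{min}}) \leq \E g_{X_e,1}(T_{\mathrm{min}}).
\]
Chaining this with Step 1 yields the claim.

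\textbf{Main obstacle.} The only subtle point is that $g_{\cdot,1}$ is defined only on the integers. Concavity must therefore be read as discrete concavity: nonincreasing increments. The definition of $\geq_{sst}$ has to be applied to functions on $\N$, or equivalently to their piecewise-linear extensions, which remain nondecreasing and concave. I would state this convention explicitly. With it in place, the proof is a direct two-line chain of inequalities.
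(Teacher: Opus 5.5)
Your proposal is correct and follows the paper's own reasoning: the paper first notes $\P(E_{i,1}) \geq \E g_{X_{e_i},1}(T_{\mathrm{min}})$ by the same conditioning argument used for the original bound. It then applies the definition of $\geq_{sst}$ to the nondecreasing concave function $x \mapsto -g_{x,1}(T_{\mathrm{min}})$, justified by Corollary~\ref{corr:g_i1_is_decreasing} and Lemma~\ref{lem:g_i1_is_convex}. Your extra remarks on the root-adjacent edge convention and on reading concavity discretely (or via piecewise-linear extension) just make explicit what the paper leaves implicit.
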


\begin{remark}
    If $U_e$ is a function of $k_e$ alone and is (second order) stochastically increasing in this parameter,
    applying Theorem~\ref{thm:caterpillar_bound} with $h(\ell) = \E g_{U_\ell, 1}(T_{\mathrm{min}})$ yields
    a new topology-free bound.
\end{remark}

 To apply this observation, we begin by analyzing the coalescent events 
occurring in the two edges directly below $e$. 

Let $Z_i^T$ be the random variable with distribution
 $\P(Z_i^T = j) = g_{ij}(T)$. Namely, if we start $i$ lineages then $Z_i^T$ represents the (random) 
 number of lineages that remain uncoalesced after running Kingman's coalescent 
 for time $T$. When we drop the $T$ superscript, it is implicitly assumed that $T = T_{\mathrm{min}}$. 
 For a positive integer-valued random variable $A$, let $Z_A^T$ denote the independent mixture
    \[\P(Z_A^T = i) = \sum_{\ell \geq 1} \P(A = \ell) \P(Z_\ell^T = i).\]
 \noindent When we write expressions such as $Z_A^T + Z_B^T$ including multiple such mixtures, 
 we implicitly mean underlying families $\{Z_\ell\}_{\ell \geq 1}, \{Z_\ell'\}_{\ell\geq 1}$ are
 independent of each other.

Suppose below the edge $e$ our tree splits into two subtrees of size $m, k-m$. Clearly then
\[X_e \leq_{st}  S := Z_{m}^{T_1} + Z_{k-m}^{T_2},\]
\noindent where $T_1, T_2$ are the lengths of the branches connecting the two subtrees 
to $e$.  Moreover
    \[Z_{m}^{T_1} + Z_{k-m}^{T_2} \leq_{st} Z_m^{T_{\mathrm{min}}} + Z_{k-m}^{T_{\mathrm{min}}},\]
\noindent by Lemma~\ref{lem:cdf_of_Zi}. Figure~\ref{fig:coalescent_tree} illustrates the setup.

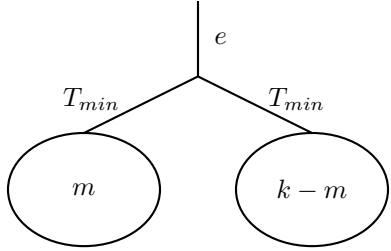
\begin{figure}[t!]
\centering
\begin{tikzpicture}[scale=1]

\coordinate (root) at (0, 3);
\coordinate (left_child) at (-1.5, 1.5);
\coordinate (right_child) at (1.5, 1.5);

\draw[thick] (root) -- (0, 4);
\node at (0.3, 3.5) {$e$};

\draw[thick] (root) -- (-1.5, 2.25);  
\draw[thick] (root) -- (1.5, 2.25);   

\node[draw, ellipse, minimum width=2cm, minimum height=1.5cm, thick] at (left_child) {};
\node at (left_child) {$m$};

\node[draw, ellipse, minimum width=2cm, minimum height=1.5cm, thick] at (right_child) {};
\node at (right_child) {$k-m$};

\node at (-1.4, 2.7) {$T_{\mathrm{min}}$};
\node at (1.3, 2.7) {$T_{\mathrm{min}}$};

\end{tikzpicture}
\caption{Coalescent tree structure showing edge $e$ with two subtrees of sizes $m$ and $k-m$. The subtrees are connected to $e$ via a branch with minimal length $T_{\mathrm{min}}$.}
\label{fig:coalescent_tree}
\end{figure}

\begin{lemma}[Deterministic Balancing Lemma] Let $k$ and $T$ be fixed. Then
    \[\quad Z^T_{i} + Z^T_{k-i} \leq_{st}  Z^T_{j} + Z^T_{k-j}, \quad\forall 1 \leq i \leq j \leq \frac{k}{2}.\]
    \noindent In particular, the stochastic maximizer is $i = \lfloor k/2\rfloor$. 
    \label{lem:deterministic_balancing_lemma}
\end{lemma}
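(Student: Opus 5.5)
It suffices to prove the one-step comparison
\[
Z^T_{i} + Z^T_{k-i} \leq_{st} Z^T_{i+1} + Z^T_{k-i-1}, \qquad 1 \leq i < i+1 \leq k/2,
\]
since stochastic order is transitive and chaining these steps from $i$ up to $j$ gives the statement; the maximizer at $\lfloor k/2\rfloor$ then follows immediately. Moving one lineage from the larger group to the smaller one should never decrease the number of survivors stochastically. My plan is a \emph{coupling} argument based on the structure of Kingman's coalescent, rather than a manipulation of the formula \eqref{eqn:def_of_g_ij}.

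\textbf{Step 1: time-change representation.} Run a single Kingman process on each group and view the number of surviving lineages as a pure-death chain with rates $\binom{n}{2}$. The key observation is that the number of survivors in a group of size $a$ at time $T$ can be obtained from a process started with $a+1$ lineages, tracking only $a$ of them. Coalescences among the tracked lineages then occur at the right rates, and the extra lineage only merges into them. In other words, $Z^T_{a+1}$ can be coupled with $Z^T_a$ so that
\[
Z^T_a \leq Z^T_{a+1} \leq Z^T_a + 1,
\]
with the excess equal to $1$ exactly on the event $D_a$ that the extra lineage has not merged with the others. This monotone coupling is consistent with Lemma~\ref{lem:cdf_of_Zi}.

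\textbf{Step 2: reduce to a comparison of absorption events.} Write $a=i$ and $b=k-i-1$, so that $a < b$. Apply the Step 1 coupling twice: to $Z^T_{a+1}$ against $Z^T_a$, and to $Z^T_{b+1}$ against $Z^T_b$. This gives
\[
Z_{a+1}^T + Z_b^T = W + \ind_{D_a}, \qquad Z_a^T + Z_{b+1}^T = W + \ind_{D_b},
\]
where $W = Z_a^T + Z_b^T$ is the common part and $D_a, D_b$ are the events that the extra lineage has not been absorbed. The claim then reduces to showing that $W + \ind_{D_b} \leq_{st} W + \ind_{D_a}$, i.e. that $\P(W + \ind_{D_b} > t) \leq \P(W + \ind_{D_a} > t)$ for every $t$. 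Heuristically this holds because an extra lineage joining a smaller group is less likely to be absorbed.

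\textbf{Step 3: the main obstacle.} The difficulty is that $\ind_{D_a}$ is correlated with $Z_a^T$. A single extra lineage among $n$ others merges at rate $n$ when the group currently has $n$ lineages, so
\[
\P\bigl(D_a \mid \text{trajectory of group } a\bigr) = \exp\Bigl(-\int_0^T N_a(s)\,ds\Bigr),
\]
where $N_a(s)$ is the number of lineages in group $a$ at time $s$. I would therefore condition on both trajectories and compare
\[
\P(W+1 > t,\ D_a) \quad \text{with} \quad \P(W+1 > t,\ D_b).
\]
Both probabilities reduce to integrals of $\exp(-\int N)$ over the event $\{Z_a^T + Z_b^T \geq t\}$.

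Because $b > a$, I would couple $N_b \geq N_a'$, where $N_a'$ is an independent copy, and try to show pathwise that swapping the roles of the two groups can only lower the absorption weight. This uses symmetry under exchanging two independent copies of the $a$-trajectory together with the monotonicity of $\exp(-\int N)$. If this pathwise argument fails, my fallback is to verify the inequality directly from the explicit formula \eqref{eqn:def_of_g_ij}. That would mean checking that $\P(Z_a + Z_{b+1} \leq t) \geq \P(Z_{a+1} + Z_b \leq t)$ through a discrete log-concavity argument for the sequences $\P(Z_\ell \leq \cdot)$ in $\ell$, which amounts to a majorization/Schur-type statement. I expect establishing this concavity-type property to be the crux.
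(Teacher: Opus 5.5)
\textbf{There is a genuine gap: your Steps 1--2 only reformulate the lemma, and Step 3, which carries all the content, is not proved.}

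Steps 1 and 2 are correct. Restricting Kingman's coalescent to $a$ of $a+1$ lineages is again Kingman's coalescent, and the untracked lineage is absorbed at rate $N_a(s)$. Hence $Z_{a+1}+Z_b\overset{d}{=}W+\ind_{D_a}$ and $Z_a+Z_{b+1}\overset{d}{=}W+\ind_{D_b}$. Since $\P(W+\ind_D>t)=\P(W>t)+\P(W=t,\,D)$, the one-step claim is \emph{equivalent} to
\[
\E\Bigl[e^{-\int_0^T N_b(s)\,ds};\ N_a(T)+N_b(T)=w\Bigr]\le\E\Bigl[e^{-\int_0^T N_a(s)\,ds};\ N_a(T)+N_b(T)=w\Bigr]\qquad\text{for every } w,
\]
and Step 3 does not prove this.

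The target you actually state is the wrong quantity. You compare $\P(W+1>t,D_a)$ with $\P(W+1>t,D_b)$, i.e.\ you integrate over $\{W\ge t\}$. But the comparison must hold on each atom $\{W=t\}$, and the cumulative version does not imply it.

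The exchange symmetry you invoke gives equality only when $b=a$. Once you couple $N_b\ge N_a'$, the endpoint $N_b(T)$ moves too, so the event $\{N_a(T)+N_b(T)=w\}$ is not preserved under the swap. The monotonicity of $e^{-\int N}$ then competes with the upward shift of $W$. Balancing these two effects for each fixed $w$ is the whole lemma. Your fallback, a Schur/log-concavity property of $\ell\mapsto\P(Z_\ell\le\cdot)$ extracted from \eqref{eqn:def_of_g_ij}, is likewise a restatement with no argument attached.

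\textbf{The missing idea is to use the dynamics of the two-group process itself, rather than a single extra lineage.}

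The paper inducts on $k$ by conditioning on the first coalescence time $\tau_{k,m}\sim\mathrm{Exp}\bigl(\binom m2+\binom{k-m}2\bigr)$, using three facts:
\begin{enumerate}
\item A more balanced split waits stochastically longer for its first coalescence.
\item Conditional on $\tau_{k,m}=t<T$, the state is a mixture of the splits $(m-1,k-m)$ and $(m,k-m-1)$ of $k-1$ lineages run for time $T-t$. The inductive hypothesis orders this mixture in $m$.
\item The conditional tail is increasing in $t$.
\end{enumerate}
Lemma~\ref{lem:mixtures_and_stochastic_dominance} then combines the three.

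Alternatively, a direct Markovian coupling in the spirit of your plan works. For sorted pairs, write $x\succeq y$ if $\min x\ge\min y$ and $x_1+x_2\ge y_1+y_2$, and couple as follows.
\begin{enumerate}
\item If the totals agree and $\min x>\min y$, the total jump rate of $x$ is at most that of $y$, by convexity of $n\mapsto\binom n2$. So every jump of $x$ can be paired with a jump of $y$.
\item If the minima agree and $x$ has the larger total, pair the jumps of $x$ that lower its minimum with jumps of $y$ that lower its minimum; the latter have at least as large a rate.
\item If $x=y$, move them together.
\item Otherwise, run the two chains independently.
\end{enumerate}
Each transition preserves $\succeq$. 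Starting from $(j,k-j)\succeq(i,k-i)$, this gives $Z_i+Z_{k-i}\le Z_j+Z_{k-j}$ pathwise, with no one-step chaining needed.
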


\noindent The proof can be found in Appendix~\ref{app:deterministic-balancing-proof}.

Heuristically, $i = \lfloor k/2 \rfloor$, where the two subtrees in Figure~\ref{fig:coalescent_tree} are as evenly 
balanced as possible, is the maximizer because it minimizes the total number of pairs ${m \choose 2} + {k-m \choose 2}$ 
available in the two subpopulations, hampering coalescence. Lemma \ref{lem:deterministic_balancing_lemma} implies a 
suitable upper bound
\begin{equation}
    X_e \leq_{st}  Z_{\lfloor k_e/2 \rfloor} + Z_{\lceil k_e/2 \rceil}
    \label{eqn:local-balancing-Xe-upper-bound}
\end{equation}
 This upper bound~\eqref{eqn:local-balancing-Xe-upper-bound} motivates applying the same balancing principle recursively throughout the subtree below $e$.

\subsubsection{From Local Balancing to Balanced Trees}
\label{sec:recursive-balancing}

By iterating the ideas from the previous section, it is natural to believe the topology
minimizing coalescence is the one where \textit{all} splits are even, namely when the 
topology below edge $e$ is the balanced tree. This is true in the following sense.

\begin{lemma}
     Given a binary tree $\cT$ (with branch lengths) let $X_\cT$ denote the number of lineages that have not coalesced by the time they reach the root. If $\cB_k$ is the balanced tree with $k$ leaves and all branch lengths equal to $T_{\mathrm{min}}$ then $X_{\cB_k} \geq_{st} X_\cT$ for any other tree $\cT$ with $k$ leaves and minimum branch length $T_{\mathrm{min}}$. 
    \label{lem:balanced_is_worse_case}
\end{lemma}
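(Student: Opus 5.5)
We argue by strong induction on the number of leaves $k$, comparing an arbitrary tree $\cT$ with $k$ leaves and minimum branch length $T_{\mathrm{min}}$ against $\cB_k$. We need a precise convention for "the root". Following the setup of Figure~\ref{fig:coalescent_tree}, $X_\cT$ counts lineages arriving at the root node, that is, after traversing the two pendant branches (each of length at least $T_{\mathrm{min}}$) that join the root to its two subtrees. Under this convention the leaves themselves carry no edge above them. For $k=1$ we have $X=1$ deterministically, so there is nothing to prove. For $k\ge 2$, let $\cT$ split at its root into subtrees $\cT_1,\cT_2$ with $m$ and $k-m$ leaves, joined to the root by branches of lengths $T_1,T_2\ge T_{\mathrm{min}}$. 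By the Markov property of Kingman's coalescent along each branch, and the independence of the coalescent in disjoint subtrees under the MSC,
\[ X_\cT \overset{d}{=} Z^{T_1}_{X_{\cT_1}} + Z^{T_2}_{X_{\cT_2}}, \]
with independent mixtures.

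The key auxiliary fact is a monotonicity statement. If $A\le_{st} A'$, $B\le_{st} B'$ and $T'\le T$, then
\[ Z^T_A + Z^{T}_B \le_{st} Z^{T'}_{A'} + Z^{T'}_{B'}. \]
This requires two facts about the pure-death chain. First, $Z^T_i$ is stochastically increasing in $i$. This follows from a coupling: run the coalescent on $i+1$ lineages and ignore one of them; the death process is monotone because it only jumps down by one, so coupled paths started from ordered states cannot cross. Second, $Z^T_i$ is stochastically decreasing in $T$, which is Lemma~\ref{lem:cdf_of_Zi}. Sums of independent variables preserve $\le_{st}$, as one sees by realizing each comparison with a monotone coupling. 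Combining these with the inductive hypothesis $X_{\cT_1}\le_{st} X_{\cB_m}$ and $X_{\cT_2}\le_{st} X_{\cB_{k-m}}$ gives
\[ X_\cT \le_{st} Z_{X_{\cB_m}} + Z'_{X_{\cB_{k-m}}}, \]
where all times are now $T_{\mathrm{min}}$.

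The remaining step, which I expect to be the main obstacle, is to show that replacing the split $(m,k-m)$ by the balanced split $(\lfloor k/2\rfloor,\lceil k/2\rceil)$ increases this quantity stochastically. Lemma~\ref{lem:deterministic_balancing_lemma} handles only deterministic inputs $Z_i+Z_{k-i}$, whereas here the inputs are the random variables $X_{\cB_m}$ and $X_{\cB_{k-m}}$. My plan is to introduce the intermediate configuration "$m$ lineages present at time $0$ in one population and $k-m$ in the other", run the two Kingman processes, and view $\cB_m$ and $\cB_{k-m}$ as additional internal structure that only delays the starting time. I would then try to prove a strengthened form of the balancing lemma by induction on the depth of the balanced tree. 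The target statement is that $X_{\cB_i}+X_{\cB_{k-i}}$, each then passed through time $T_{\mathrm{min}}$, is stochastically increasing in $i$ for $i\le k/2$.

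The first approach I would try is to reduce to a statement about total pair counts. The heuristic is that the balanced split minimizes $\binom{m}{2}+\binom{k-m}{2}$ at every level. To make this rigorous, I would couple the two configurations, the $(m,k-m)$ split and the $(m+1,k-m-1)$ split, transferring one leaf at a time as in the proof of Lemma~\ref{lem:deterministic_balancing_lemma}. I would verify at each level of the recursion that moving a leaf from the larger side to the smaller side yields a stochastic increase, using the inductive hypothesis on the subtrees. If this direct coupling fails, the fallback is to strengthen the inductive statement to the joint law of the vector of lineage counts on the two sides, ordered under majorization. The idea is that a more balanced count vector coalesces more slowly, giving a Schur-type monotonicity. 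I would then check that Kingman's dynamics preserve this ordering.
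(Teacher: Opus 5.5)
Your first step is correct and matches the paper's reduction. The induction hypothesis on the two root subtrees, together with monotonicity of $Z^T_i$ in $i$ and $T$, gives $X_\cT \le_{st} Z_{X_m}+Z'_{X_{k-m}}$, where $X_j := X_{\cB_j}$ and all branch lengths are $T_{\mathrm{min}}$. The gap is the second step, which carries all the content. You must show $Z_{X_m}+Z'_{X_{k-m}} \le_{st} Z_{X_{\lfloor k/2\rfloor}}+Z'_{X_{\lceil k/2\rceil}}$ (the paper's Lemma~\ref{lem:the_more_balanced_the_better}), and the proposal only lists strategies, none of which closes:
\begin{itemize}
\item Treating the subtrees as a ``delay'' and comparing with deterministic starting counts gives $Z_{X_m}+Z'_{X_{k-m}} \le_{st} Z_m+Z'_{k-m} \le_{st} Z_{\lfloor k/2\rfloor}+Z'_{\lceil k/2\rceil}$. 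But the target is also dominated by that last quantity, so this compares nothing. It discards exactly the internal coalescence that differs between the two configurations.
\item The leaf-transfer coupling aims at monotonicity of $Z_{X_i}+Z'_{X_{k-i}}$ in $i$. That is stronger than needed, and the paper leaves it as a conjecture. Moreover, the first-coalescence recursion behind Lemma~\ref{lem:deterministic_balancing_lemma} does not close for trees: after a coalescence inside $\cB_m$, the residual configuration is no longer a pair of balanced trees.
\item The majorization fallback does not say which ordering survives the merging of populations at internal nodes, and that is the whole difficulty.
\end{itemize}

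The missing idea is a balancing lemma with random inputs, applied one level deeper. Write $X_m = Z_{X_{\lfloor m/2\rfloor}}+Z_{X_{\lceil m/2\rceil}}$, and likewise for $X_{k-m}$. The quantity becomes $Z_{a_1+a_2}+Z_{b_1+b_2}$ with independent $a_1\le_{lr}a_2\le_{lr}b_1\le_{lr}b_2$. Lemma~\ref{lem:balancing_lemma} then lets you swap grandchildren: the sum is $\le_{st} Z_{a_1+b_2}+Z_{a_2+b_1}$ for $k$ even, and $\le_{st} Z_{a_1+b_1}+Z_{a_2+b_2}$ for $k$ odd. Each new subscript is the root count of a tree whose two root subtrees are balanced, with sizes summing to $k/2$ (resp.\ $(k-1)/2$ and $(k+1)/2$). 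The induction hypothesis at half size and Corollary~\ref{corollary:stochastic_dominance_composition} then finish.

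The swap genuinely needs likelihood-ratio order rather than first-order dominance. Its proof conditions on $A+A'$ and $|A'-A|$ and requires $\P(A'=x)\P(A=y)\ge \P(A'=y)\P(A=x)$ for $x\ge y$. Obtaining $Z_{X_i}\le_{lr}Z_{X_j}$ for $i\le j$ (Lemma~\ref{lem:xi_zxi_increasing_in_lro}) in turn uses:
\begin{itemize}
\item total positivity of the Kingman transition kernel;
\item closure of the likelihood-ratio order under mixtures;
\item closure under convolution, which requires log-concavity of $Z_{X_i}$ (Lemma~\ref{lem:xm_zxm_are_ultra_log_concave}, via Theorem~\ref{thm:kingman_preserves_ultralogconcave}).
\end{itemize}
Your toolkit of first-order dominance and monotone couplings supplies none of this, and it is not clear how to perform the swap without it.
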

\noindent The proof is postponed to Appendix~\ref{app:balanced-worst-case-proof}, reducing 
the result to our \textit{balancing lemma} \ref{lem:balancing_lemma}. This extension of
Lemma \ref{lem:deterministic_balancing_lemma} allows for random 
subpopulation sizes, and essentially by iteratively applying it we can transform any tree
$\cT$ to the corresponding balanced tree $\cB_k$.

Lemma \ref{lem:balanced_is_worse_case} suggests a natural way of upper-bounding 
the number of lineages $X_e$ entering a given edge $e$: $X_e \leq_{st} X_{\cB_{k_e}}$. From this, we get our final bound.

\begin{theorem}[Balanced Bipartition Cover Bound]
If we have a species tree with $k$ species and a set $\cG$ of at least
\begin{equation}
    M_b(k, T_{\mathrm{min}}):=  \inf\left\{n \in \N : \sum_{\ell=2}^{k-2}\left[1-w_\ell\right]^n \leq 1-q\right\}
\label{eqn:balanced-bound}
\end{equation}
\noindent gene trees sampled independently under the MSC model, where

\begin{equation}
    w_\ell := \P(W_{\ell} = 1)
    \label{eqn:def_of_zl}
\end{equation}

\noindent and where the distributions of $W_\ell := Z_{X_{\cB_\ell}}$ are defined recursively by $W_1 \equiv 1$ and
\begin{equation}
\P(W_\ell=j)
=
\sum_{a\geq 1}\sum_{b\geq 1}
\P\left(W_{\lfloor\ell/2\rfloor}=a\right)
\P\left(W_{\lceil\ell/2\rceil}=b\right)
g_{a+b,j}(T_{\mathrm{min}}),
\qquad \ell\geq 2.
\label{eqn:balanced-W-recursion}
\end{equation}

\noindent Then $\cG$ forms a bipartition cover with probability at least $q$. Here $T_{\mathrm{min}}$ is the minimum branch length (in coalescent units) in the species tree.
\label{thm:improved_bipartition_cover_bound_balanced}
\end{theorem}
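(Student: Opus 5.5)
The plan is to apply Theorem~\ref{thm:caterpillar_bound} with the choice $h(\ell) := w_\ell = \P(W_\ell = 1)$. Two things then need checking: that $\P(E_{i,1}) \geq w_{k_i}$ for every nontrivial bipartition $\phi_i$, and that $\ell \mapsto w_\ell$ is decreasing. Once both hold, Theorem~\ref{thm:caterpillar_bound} gives $M_b(k,T_{\mathrm{min}})$ directly. Since $w_\ell$ depends only on $\ell$ and $T_{\mathrm{min}}$, the bound is topology-free.

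\textbf{Per-edge bound.} Fix an edge $e_i$ with $k_i$ descendant leaves, and let $\cT_i$ be the subtree below $e_i$. As in the proof outline of Theorem~\ref{thm:uricchios_bipartition_cover_bound}, $\phi_i$ appears in the gene tree whenever the $X_{e_i}$ lineages entering $e_i$ coalesce to one before leaving it. Conditioning on $X_{e_i}$ and using that the coalescent on $e_i$ runs for time $T_{e_i} \geq T_{\mathrm{min}}$, Lemma~\ref{lem:cdf_of_Zi} gives
\[
\P(E_{i,1}) \geq \E\, g_{X_{e_i},1}(T_{e_i}) \geq \E\, g_{X_{e_i},1}(T_{\mathrm{min}}).
\]
Lemma~\ref{lem:balanced_is_worse_case} applied to $\cT_i$ yields $X_{e_i} = X_{\cT_i} \leq_{st} X_{\cB_{k_i}}$. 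Because $x \mapsto g_{x,1}(T_{\mathrm{min}})$ is decreasing (Corollary~\ref{corr:g_i1_is_decreasing}), first-order dominance suffices here, and we do not need Lemma~\ref{lem:bounding_prob_with_second_order_stochastic_dominance}. Hence
\[
\P(E_{i,1}) \geq \E\, g_{X_{\cB_{k_i}},1}(T_{\mathrm{min}}) = \P\bigl(Z_{X_{\cB_{k_i}}} = 1\bigr) = w_{k_i}.
\]

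\textbf{The recursion.} I still need to confirm that \eqref{eqn:balanced-W-recursion} is the law of $Z_{X_{\cB_\ell}}$. By the recursive structure of $\cB_\ell$ (all branch lengths $T_{\mathrm{min}}$), the root of $\cB_\ell$ has two children whose subtrees are $\cB_{\lfloor \ell/2\rfloor}$ and $\cB_{\lceil \ell/2\rceil}$. Each subtree is attached to the root by a pendant edge of length $T_{\mathrm{min}}$, so the lineage count leaving each child's pendant edge is distributed as $W_{\lfloor\ell/2\rfloor}$ and $W_{\lceil\ell/2\rceil}$ respectively. These counts are independent by the independence of the coalescent on disjoint subtrees. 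Therefore $X_{\cB_\ell} \overset{d}{=} W_{\lfloor\ell/2\rfloor} + W'_{\lceil\ell/2\rceil}$. Running one more edge of length $T_{\mathrm{min}}$ produces the convolution-then-$g_{a+b,j}$ formula, and $W_1 \equiv 1$ handles a single leaf. The conventions for pendant edges versus the edge $e$ itself must match Lemma~\ref{lem:balanced_is_worse_case}; I would take that bookkeeping from the lemma's statement.

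\textbf{Monotonicity (main obstacle).} The remaining step, which I expect to be the hardest, is showing that $w_\ell$ is nonincreasing in $\ell$; Theorem~\ref{thm:caterpillar_bound} requires it. I would first try a coupling argument: for $\ell < \ell'$, take $\cB_{\ell'}$ and delete $\ell'-\ell$ leaves so that the result is a (non-balanced, possibly with longer merged edges) tree $\cT$ on $\ell$ leaves with minimum branch length $\geq T_{\mathrm{min}}$. Removing lineages can only decrease the surviving count under the natural coupling of Kingman's coalescent, so $X_{\cT} \leq_{st} X_{\cB_{\ell'}}$. Lemma~\ref{lem:balanced_is_worse_case} gives $X_{\cT}\leq_{st} X_{\cB_\ell}$, but this points the wrong way, so instead I would prove $X_{\cB_\ell} \leq_{st} X_{\cB_{\ell+1}}$ directly by induction on $\ell$. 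The two halves of $\cB_{\ell+1}$ are balanced trees of sizes at least those of $\cB_\ell$'s halves. Monotonicity of $Z^T_i$ in $i$ (Lemma~\ref{lem:cdf_of_Zi}) then propagates through the recursion \eqref{eqn:balanced-W-recursion}, since stochastic order is preserved under independent sums and under mixing $Z_A$ with $A$ stochastically larger. Alternatively, if monotonicity proves troublesome, one can replace $h$ by its running minimum $\tilde h(\ell)=\min_{m\le \ell} w_m$. This still gives a valid bound via Theorem~\ref{thm:caterpillar_bound}, but it no longer matches $M_b$ exactly, so the induction is the preferred route.
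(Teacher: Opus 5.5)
Your proposal is correct and follows essentially the same route as the paper's proof. In both, the recursion is verified by splitting $\cB_\ell$ into its two balanced subtrees plus one extra branch of length $T_{\mathrm{min}}$. Lemma~\ref{lem:balanced_is_worse_case} is combined with the per-edge bound, $X_{\cB_\ell}\leq_{st} X_{\cB_{\ell+1}}$ is proved by induction via Corollary~\ref{corollary:stochastic_dominance_composition}, and Theorem~\ref{thm:caterpillar_bound} is applied with $h(\ell)=w_\ell$. Your observation that first-order dominance plus the monotonicity of $x\mapsto g_{x,1}(T_{\mathrm{min}})$ suffices, without the second-order lemma, is accurate and is in effect what the paper's argument uses as well.
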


\begin{proof}[Reduction to Lemma \ref{lem:balanced_is_worse_case}]
    First, we show the recursion above does produce the distribution of 
    $W_\ell = Z_{X_{\cB_\ell}}$. We do this by induction on $\ell$. The base 
    case of $\ell =1$ is trivial as $W_1 = Z_{X_1} = Z_1 \equiv 1$. For the inductive 
    step, note that the  balanced tree with $\ell$ leaves has balanced 
    subtrees with $\lceil \ell/2 \rceil$ and $\lfloor \ell/2\rfloor$ leaves respectively.
     By induction, $W_{\lceil \ell/2 \rceil}$ and $W_{\lfloor \ell/2 \rfloor}$ give the distributions 
     of the numbers of lineages exiting these subtrees, respectively. These counts are independent under the MSC.
     Conditional on their values being $a$ and $b$, the resulting $a+b$ lineages traverse one more branch of length
     $T_{\mathrm{min}}$, giving Equation~\eqref{eqn:balanced-W-recursion}. 

    Lemma \ref{lem:balanced_is_worse_case} implies $X_e \leq_{st} X_{\cB_{k_e}}$ 
    so by the remark below Lemma \ref{lem:bounding_prob_with_second_order_stochastic_dominance} it 
    suffices to show that $U_\ell := X_{\cB_\ell}$ is stochastically increasing in $\ell$. This follows from
    induction as
        \[U_\ell \overset{d}{=} Z_{U_{\lfloor \ell/2 \rfloor}} + Z'_{U_{\lceil \ell/2 \rceil}}.\]
    \noindent When $\ell$ increases by one, exactly one of $\lfloor \ell/2\rfloor$ and
    $\lceil \ell/2\rceil$ increases by one. Hence the inductive hypothesis and
    Corollary~\ref{corollary:stochastic_dominance_composition} imply
    $U_{\ell+1} \geq_{st} U_\ell$. It follows that
    \[
        h(\ell):=\E g_{U_\ell,1}(T_{\mathrm{min}})
        =\P(W_\ell=1)=w_\ell
    \]
    is decreasing. Applying Theorem~\ref{thm:caterpillar_bound} with this choice of
    $h$ gives the result.
\end{proof}

\noindent As the subtrees of a balanced tree are balanced, we can calculate $w_\ell$ in a recursive fashion. In our asymptotic analysis below, 
we will discuss the following convenient upper bound
\[M_b(k,T) \leq \frac{\log\left(\frac{k-3}{1-q}\right)}{-\log\left(1-g_{\E X_{\cB_{k-2}}, 1}(T)\right)}.\]
\noindent Hence our analysis has allowed us to replace $k-2$ by $\E X_{\cB_{k-2}}$, which depending on
 $T$ may be a large improvement.

\section{Simulations}
\label{sec:simulations}

In this section we compare the performance of our new bounds both to the original bound of \citet{uricchio2016_bipartition_cover} and to empirical results coming from simulations of the MSC model under various species tree topologies. All relevant code can be found on GitHub \citep{github_page}.

\subsection{Bound Growth Rates}
\label{sec:bound-growth-rates}

First, we empirically explored how all the bounds grow as functions of the two key parameters: the minimum branch length $T_{\mathrm{min}}$ and the species count $k$.  Figure \ref{fig:bound_comparison} highlights our results. We can observe that all bounds increase with $k$ and decrease with $T_{\mathrm{min}}$ as is natural. However, our newer bounds are dramatically lower in essentially all regimes.

While it varies species to species, the maximal number of independent genes is typically on the order of $10^3$ to $10^5$ (\cite{Loman2012BacterialGenomes}, \cite{Pertea2023HumanGeneCatalogue}). One drawback of the original bipartition cover bound \ref{thm:uricchios_bipartition_cover_bound} that Figure \ref{fig:bound_comparison} highlights is that it dips above this threshold even for relatively few species $k$, especially when the minimum branch length is short. This can limit its practicability because scientists might not have access to enough genes to achieve their desired coverage probability.

Thus, one attractive feature of our new bound is that it remains below these biologically reasonable thresholds for a much wider choice of the relevant parameters $k, T_{\mathrm{min}}$, greatly improving its utility.

\subsection{Improvement Ratios}
\label{sec:improvement-ratios}

To quantify how much these new bounds improve on the bound of \cite{uricchio2016_bipartition_cover} and over each other, we plot the ratios $I := M_{\mathrm{old}}/M_{\mathrm{new}}$ again as functions of the two nontrivial parameters $T_{\mathrm{min}}, k$. Values of $I > 1$ indicate an improvement over the original bound, and values $I < 1$ a degradation. Figure \ref{fig:improvement_ratios} compares the various bounds we have constructed throughout this paper. 

As we discussed above, the balanced bound improves on the caterpillar bound, which improves on the original bound of \citet{uricchio2016_bipartition_cover}. As such, all improvement ratios $I \geq 1$. But as Figure \ref{fig:improvement_ratios} highlights, most of the improvement comes from the more sophisticated balanced bound (Theorem \ref{thm:improved_bipartition_cover_bound_balanced}), which improves on the original bound by several orders of magnitude in the challenging high $k$ and low $T_{\mathrm{min}}$ regimes. 

On the other hand, the caterpillar bound only improves by a small constant factor. As we discuss in Section \ref{sec:estimated_growth_rates} below, this is likely due to the fact that most terms in the sum $\sum_{\ell=2}^{k-2} (1-g_{\ell,1}(T))^n$ quickly saturate for large $\ell$, so we gain little by replacing the maximum $g_{k-2,1}(T)$ by $g_{\ell, 1}(T)$.

\subsection{Quantifying Overestimation}
\label{sec:quantifying-overestimation}

In this section, we aim to quantify the level to which our bounds overestimate the number of gene trees required to obtain a cover. Namely, if $n_e$ is the true $q$-quantile and $n_b$ is one of our upper bounds, define the \textit{overestimation ratio} to be the quantity $n_b/n_e$. To estimate $n_e$, we empirically generate gene trees from the MSC model until we get a bipartition cover, and repeat this for $N$ independent trials to get an estimate of the desired quantile. In this study, we chose $N = 10^4$ trials and $q = 0.9$.

We explore a few different species tree topologies: the caterpillar tree with equal branch lengths, the balanced tree with equal branch lengths, and trees generated randomly from the Yule model. For Yule trees we normalize the branch lengths so they have the desired minimum length $T_{\mathrm{min}}$. We sample $100$ different Yule trees in this way in order to study the distribution of overestimation ratios. 

Since caterpillar and balanced trees represent the two worst cases we explored in this paper, they in a sense measure how close our bound is to optimal in a topology-free sense. We expect our bound to be tighter in these scenarios as compared to the more average case of Yule trees. Our results for caterpillar and balanced trees are displayed in Figures \ref{fig:empirical_overestimation_ratios_for_caterpillar_trees} and \ref{fig:empirical_overestimation_ratios_for_balanced_trees} respectively. The overestimation ratio is significantly higher for balanced trees as compared to caterpillars. Since our bound is topology-free, this aligns with the empirical observation that caterpillar trees are difficult to recover under the MSC. Overall, the new bound is still fairly far from tight, suggesting incorporating even partial topological information might be necessary.

Examining its dependence on the model parameters, we observe that the overestimation ratio of our new bound varies less with the species parameter $k$ as compared to the original bound, indicating that the balanced bound should scale more favorably to larger values of $k$ compared to the original bound. On the other hand, performance appears to deteriorate significantly as $T_{\mathrm{min}}$ decreases, suggesting less favorable scaling in the small $T_{\mathrm{min}}$ regime.

Since most trees differ substantially from these worst-case configurations, the performance on Yule trees provides a more realistic measure of the typical behavior of the bound. To assess this, we sample a range of species tree topologies from the Yule model and compute the corresponding overestimation ratios. Figure \ref{fig:empirical_overestimation_ratios_for_yule_trees} shows the resulting distribution of these ratios across various choices of $k$ and $T_{\min}$. 

As expected, performance in this setting is noticeably worse than in the structured worst-case scenarios, but it remains substantially better than that of the original bound. Increasing the species parameter $k$ appears to have a much larger impact than in the previous settings, suggesting worse scaling. This highlights that incorporating even partial topological information may be essential for achieving further improvements.

\subsection{Estimated Growth Rates}
\label{sec:estimated_growth_rates}

 Since the bounds are complicated functions of $T_{\mathrm{min}}$ and $k$, we develop estimates of their scaling with respect to these two parameters.

 Here we confirm our intuition that our new bounds drastically improve performance in the small $T$ regime. For fixed $T$, we show that all the bounds are $\Theta_T(\log(k))$, and that this is essentially the best one can do while relying on the union bound. We start with a result that specifies the asymptotics of the function $g_{k,1}(T)$ underlying all our bounds.

\begin{lemma}[Asymptotics of $g_{k,1}(T)$]
The function $T \mapsto g_{k,1}(T)$ has asymptotics
\begin{equation}
     \begin{cases} 1 - g_{k,1}(T) \sim \frac{3(k-1)}{k+1} \exp(-T), & T \uparrow \infty, \\ 
 g_{k,1}(T) \sim \frac{k!}{2^{k-1}} T_k^{k-1}, &  k^2T_k = o(1).
\end{cases}
\label{eqn:asymptotics_T_mapsto_g_k1(T)}
\end{equation}
\noindent Moreover, for fixed $T$, we have $g_{k,1}(T) \downarrow \P(S_\infty \leq T) =: s(T)$ where $S_\infty := \sum_{i=2}^\infty \tau_i$ for $\tau_i \sim \mathrm{Exp}\left({i \choose 2}\right)$ independent. Furthermore, the gap $\Delta_k(T) := g_{k,1}(T) - s(T)$ satisfies
\[\Delta_k(T) = \frac{2f_{S_\infty}(T)}{k} + o\left(\frac{1}{k}\right) \sim \frac{2f_{S_\infty}(T)}{k}.\]
\label{lem:asymptotics_g_k1(T)}
\end{lemma}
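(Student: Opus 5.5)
The plan rests on one probabilistic representation. Under Kingman's coalescent started from $k$ lineages, the time to reach a single lineage is $S_k:=\sum_{i=2}^k\tau_i$ with $\tau_i\sim\mathrm{Exp}\bigl({i\choose 2}\bigr)$ independent, so $g_{k,1}(T)=\P(S_k\leq T)$. Each part of Lemma~\ref{lem:asymptotics_g_k1(T)} then reduces to a statement about sums of independent exponentials. The one exception is the large-$T$ expansion, which I read off directly from \eqref{eqn:def_of_g_ij}.

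\emph{Large $T$.} Put $j=1$ in \eqref{eqn:def_of_g_ij}. The summand indexed by $m$ has exponential factor $e^{-{m\choose 2}T}$.
\begin{itemize}
\item The $m=1$ term equals exactly $1$.
\item The $m=2$ term equals $-3\,e^{-T}\,\frac{1_{(1)}k_{[2]}}{2!\,k_{(2)}}\cdot 2!/1!$ after simplification, that is $-\frac{3(k-1)}{k+1}e^{-T}$.
\item All remaining terms, $m\geq 3$, are $O(e^{-3T})$ for fixed $k$.
\end{itemize}
Hence $1-g_{k,1}(T)=\frac{3(k-1)}{k+1}e^{-T}+O(e^{-3T})$. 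Before relying on this I will recheck the factorial bookkeeping in the $m=2$ coefficient.

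\emph{Small $T$, uniformly with $k^2T_k\to 0$.} The joint density of $(\tau_2,\dots,\tau_k)$ is $\prod_i\lambda_i e^{-\lambda_i t_i}$ with $\lambda_i={i\choose 2}$. On the simplex $\{t_i\geq 0,\ \sum t_i\leq T\}$ we have $\sum\lambda_it_i\leq\lambda_k T={k\choose 2}T$, so the density lies between $e^{-{k\choose 2}T}\prod\lambda_i$ and $\prod\lambda_i$. The simplex has volume $T^{k-1}/(k-1)!$. Therefore
\[
e^{-{k\choose 2}T}\,\frac{\prod_{i=2}^k\lambda_i}{(k-1)!}\,T^{k-1}\;\leq\; g_{k,1}(T)\;\leq\;\frac{\prod_{i=2}^k\lambda_i}{(k-1)!}\,T^{k-1}.
\]
Since $\prod_{i=2}^k{i\choose 2}=k!(k-1)!/2^{k-1}$, the middle expression is $\frac{k!}{2^{k-1}}T^{k-1}$ up to these factors. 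The exponential factor tends to $1$ exactly when $k^2T_k\to 0$, which gives the second asymptotic.

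\emph{Monotone limit and the gap.} The sums increase, $S_k\uparrow S_\infty$, and $S_\infty<\infty$ a.s. because $\E S_\infty=\sum_{i\geq 2}2/(i(i-1))=2$. Hence $g_{k,1}(T)\downarrow s(T)$. For the gap, write $S_\infty=S_k+R_k$ with $R_k:=\sum_{i>k}\tau_i$ independent of $S_k$. Then
\[
\Delta_k(T)=\P(S_k\leq T<S_k+R_k)=\E\int_{T-R_k}^{T}f_{S_k}(x)\,dx .
\]
The tail has $\E R_k=\sum_{i>k}\frac{2}{i(i-1)}=\frac 2k$ and $\operatorname{Var}R_k=\sum_{i>k}\frac{4}{i^2(i-1)^2}=O(k^{-3})$. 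So $R_k$ concentrates at scale $2/k$, and
\[
\Delta_k=\E[R_k]\bigl(f_{S_\infty}(T)+o(1)\bigr)=\frac{2f_{S_\infty}(T)}{k}+o(1/k),
\]
provided $f_{S_k}\to f_{S_\infty}$ uniformly on a neighbourhood of $T$. The neighbourhood issue is harmless: the part of the expectation where $R_k$ is large has $\E[R_k;R_k>\delta]=o(1/k)$ by Chebyshev, using that the densities are bounded.

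\emph{Main obstacle: uniform convergence of the densities.} I would handle this by convolution smoothing. Write $S_k=\tau_2+\tau_3+V_k$ with $V_k$ independent of $(\tau_2,\tau_3)$. The density $h$ of $\tau_2+\tau_3$ is bounded and Lipschitz on $\R$, since it is continuous, vanishes at $0$, and has bounded derivative. Consequently $f_{S_k}=h*\mu_{V_k}$ is bounded and Lipschitz uniformly in $k$. Also $f_{S_k}(x)=\E h(x-V_k)$, and $V_k\to V_\infty$ almost surely. Bounded convergence gives pointwise convergence to $f_{S_\infty}(x)$, and the uniform Lipschitz bound upgrades this to locally uniform convergence.
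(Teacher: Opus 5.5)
Your proof is correct. It shares the paper's skeleton but replaces the two technical ingredients.

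The monotone limit and the gap decomposition $\Delta_k(T)=\E[F_k(T)-F_k(T-R_k)]$, with $\E R_k=2/k$ and $\E R_k^2=O(k^{-2})$, are exactly what the paper does. Your large-$T$ step is the same ``slowest exponential dominates'' argument. You read it off from Tavar\'e's formula \eqref{eqn:def_of_g_ij}, whereas the paper uses the hypoexponential partial-fraction expansion with $C_{1,k}$ found by induction. Your coefficient $-3(k-1)/(k+1)$ is right; the two factors $2!$ in your bookkeeping just cancel.

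For small $T$, the paper computes $F_k^{(k+r)}(0)$ through complete homogeneous symmetric polynomials. It then bounds the Taylor tail by a geometric series in $s_kT/k$. This buys a full asymptotic expansion, in which every truncation has negligible relative error, for arbitrary distinct rates. Your simplex sandwich
\[
e^{-\binom{k}{2}T}\,\frac{k!}{2^{k-1}}\,T^{k-1}\leq g_{k,1}(T)\leq \frac{k!}{2^{k-1}}\,T^{k-1}
\]
gives only the leading term. However, it is far more elementary and is an explicit two-sided bound valid for all $k$ and $T$. Moreover, Jensen's inequality over the simplex, whose centroid has all coordinates equal to $T/k$, improves the lower factor to $\exp\bigl(-\tfrac{T}{k}\sum_{i=2}^{k}\binom{i}{2}\bigr)$. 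That is precisely the paper's average-rate regime.

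For the densities, the paper bounds every $F_k^{(m)}$ uniformly via Young's inequality and then appeals to Boos' converse to Scheff\'e's lemma. Your smoothing by $\tau_2+\tau_3$ replaces all of this, and in fact gives more than you use. Since $V_\infty=V_k+R_k$,
\[
\|f_{S_k}-f_{S_\infty}\|_\infty\leq \mathrm{Lip}(h)\,\E R_k=O(1/k).
\]
So the truncation at $\delta$ and the Chebyshev step are unnecessary. Together with the Lipschitz continuity of $f_{S_\infty}$, you obtain $\Delta_k(T)=2f_{S_\infty}(T)/k+O(k^{-2})$ uniformly in $T$.
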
 

\noindent The proof can be found in Appendix~\ref{app:coalescent-asymptotics-proof}. 
Using this, we develop the following asymptotics for our two bounds. Recall that $M_o(k,T)$ denotes the original bound of \citet{uricchio2016_bipartition_cover} and $M_b(k,T)$ our balanced bound (Theorem~\ref{thm:improved_bipartition_cover_bound_balanced}). 

\begin{lemma}[Bound Asymptotics]
    Let $\kappa_{q,k} := \log((k-3)/(1-q))$ and $u(T) := \frac{2-\exp(-T/2)}{1-\exp(-T/2)}$. Then
    \[M_o(k,T)  \sim \begin{cases}
    \frac{\kappa_{q,k}}{T}, & T \uparrow \infty, \\
    \frac{2^{k-3}\kappa_{q,k}}{(k-2)! T^{k-3}}, & T = o(k^{-2}), \\
    \frac{\kappa_{q,k}}{-\log(1-s(T))}, & T \text{ fixed}, k \to \infty
\end{cases}\]

    \[M_b(k,T) \lesssim 
        \frac{\kappa_{q,k}}{-\log(1-g_{u(T), 1}(T))}, \qquad T \text{ fixed}, k \to \infty\]

    \noindent Moreover, in the fixed $T$ regime, the improvement factor satisfies
    \[
    \liminf_{k\to\infty}\frac{M_o(k,T)}{M_b(k,T)}
    \geq \beta_T
    :=\frac{\log(1-g_{u(T),1}(T))}{\log(1-s(T))},
    \qquad
    \beta_T\sim \frac{\pi^2}{2T}
    \quad\text{as }T\downarrow0.
    \]
    \label{lem:bound_asymptotics}
\end{lemma}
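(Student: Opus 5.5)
The plan is to derive every statement from Lemma~\ref{lem:asymptotics_g_k1(T)} together with a first-moment analysis of the balanced-tree recursion. For $M_o$ the numerator $\kappa_{q,k}$ is unchanged, so only the denominator $-\log(1-g_{k-2,1}(T))$ needs analysing, case by case.
\begin{itemize}
\item As $T\uparrow\infty$, Lemma~\ref{lem:asymptotics_g_k1(T)} gives $1-g_{k-2,1}(T)\sim \frac{3(k-3)}{k-1}e^{-T}$. Hence $-\log(1-g_{k-2,1}(T)) = T + O(1) \sim T$.
\item When $T=o(k^{-2})$, the second case of the same lemma with $k-2$ in place of $k$ gives $g_{k-2,1}(T)\sim \frac{(k-2)!}{2^{k-3}}T^{k-3}\to 0$. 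Then $-\log(1-g)\sim g$, which yields the stated formula.
\item For fixed $T$ and $k\to\infty$, monotone convergence $g_{k-2,1}(T)\downarrow s(T)\in(0,1)$ makes the denominator converge to $-\log(1-s(T))$.
\end{itemize}

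For $M_b$ I would first justify the displayed upper bound $M_b(k,T)\le \kappa_{q,k}/(-\log(1-g_{\E X_{\cB_{k-2}},1}(T)))$. Since $w_\ell=\E g_{X_{\cB_\ell},1}(T)$ and $x\mapsto g_{x,1}$ is convex (Lemma~\ref{lem:g_i1_is_convex}, extended to real arguments by linear interpolation, which preserves convexity), Jensen gives $w_\ell\ge g_{\E X_{\cB_\ell},1}$. Since $w_\ell$ is decreasing, each term of the sum in \eqref{eqn:balanced-bound} is at most $(1-w_{k-2})^n$, and inverting gives the bound.

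It then suffices to show $m_\ell:=\E X_{\cB_\ell}\le u(T)$ for all $\ell$, since $g_{\cdot,1}$ is decreasing. To do this I would bound $\E Z_n^T$. The lineage count $N_t$ has drift $-\binom{N_t}{2}$, and Jensen gives $\frac{d}{dt}\E N_t\le -\frac12 \E N_t(\E N_t-1)$. Comparing with the logistic ODE yields
\[\E Z_n^T\le f(n):=\frac{1}{1-(1-1/n)e^{-T/2}}.\]
The function $f$ is increasing and concave in $n$, so $\E Z_A^T\le f(\E A)$ for random $A$. Using $U_\ell\overset{d}{=}Z_{U_{\lfloor\ell/2\rfloor}}+Z'_{U_{\lceil\ell/2\rceil}}$ gives $m_\ell\le f(m_{\lfloor \ell/2\rfloor})+f(m_{\lceil \ell/2\rceil})$. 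The fixed point of $m=2f(m)$ solves $m-me^{-T/2}+e^{-T/2}=2$, i.e.\ $m=u(T)$. Since $m_1=1\le u$ and $2f$ is increasing with $2f(u)=u$, induction gives $m_\ell\le u(T)$. The liminf statement then follows by dividing the fixed-$T$ asymptotic for $M_o$ by the upper bound for $M_b$, since the $\kappa_{q,k}$ factors cancel.

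The main obstacle is $\beta_T\sim\pi^2/(2T)$ as $T\downarrow0$. Here $u(T)\sim 2/T$, and both $g_{u,1}(T)$ and $s(T)$ tend to $0$, so $\beta_T\sim g_{u(T),1}(T)/s(T)$. Writing $g_{n,1}(T)=\P(S_n\le T)$ with $S_n=\sum_{i=2}^n\tau_i$ and $s(T)=\P(S_n+R_n\le T)$ with independent remainder $R_n=\sum_{i>n}\tau_i$, the ratio is $1/\P(R_n\le T-S_n\mid S_n\le T)$.

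I would try the following, in order.
\begin{enumerate}
\item Get sharp small-$T$ asymptotics for $s(T)$ and its density from the alternating series, using the Jacobi theta-type transformation to obtain the form $C\,T^{-a}e^{-c/T}$.
\item Show that, conditioned on $S_n\le T$, the slack $T-S_n$ has an exponential-type law at scale $T^2$, coming from the exponential tilting at the saddle point.
\item Note that $R_n$, with $n\approx 2/T$, has mean $\approx T$ and behaves in the tilted measure like a sum of many small exponentials.
\item Evaluate the resulting Laplace-transform integral, where the constant $\pi^2/2$ should emerge from $\sum_i 1/\binom{i}{2}$-type tilted sums.
\end{enumerate}
If the direct computation proves unwieldy, I would fall back to proving only the lower bound $\beta_T\gtrsim \pi^2/(2T)$, which is all the liminf statement needs.
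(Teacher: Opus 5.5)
\paragraph{Where your proposal stands.} Everything up to and including the $\liminf$ inequality is correct and is essentially the paper's argument.
\begin{itemize}
\item The three regimes for $M_o$ come directly from Lemma~\ref{lem:asymptotics_g_k1(T)}. For fixed $T$ you rightly need only convergence of the denominator, where the paper uses a Taylor expansion.
\item The bound on $M_b$ is Jensen with convexity of $g_{\cdot,1}$, plus $\E X_{\cB_\ell}\le u(T)$.
\item Your fixed-point induction ($m_\ell\le 2f(u)=u$) is a tidier version of the dyadic iteration in Corollary~\ref{corr:upperbound_lineages_balanced}. It needs no coupling to pass from $2^m$ to general $\ell$, and it makes the monotonicity of $w_\ell$ unnecessary.
\end{itemize}
The genuine gap is the last claim, $\beta_T\sim\pi^2/(2T)$. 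You give a list of strategies rather than an argument, and your fallback yields only a lower bound, not an asymptotic equivalence. The paper argues differently:
\begin{itemize}
\item It writes $\beta_T\sim 1+\Delta/s$ with $\Delta=g_{u(T),1}(T)-s(T)$.
\item It substitutes the gap asymptotic $\Delta_k(T)\approx 2f_{S_\infty}(T)/k$ at $k=u(T)\approx 2/T$, giving $\Delta/s\approx T\,(\log s)'(T)$.
\item It then uses a Tauberian theorem ($\log\E e^{-xS_\infty}\sim-\pi\sqrt{2x}$, hence $\log s(T)\sim-\pi^2/(2T)$) and log-concavity of $s$ to conclude $(\log s)'(T)\sim\pi^2/(2T^2)$.
\end{itemize}

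\paragraph{Why this gap cannot be closed.} The stated equivalence appears to be false, and your own steps 2--3 already show why. Conditioned on $S_n\le T$, the slack $T-S_n$ lives on scale $T^2$, whereas $\E R_n=2/n\approx T$. So $\P(R_n\le T-S_n\mid S_n\le T)$ is exponentially small in $1/T$. A saddle-point computation gives $\log\beta_T\approx c/T$ with $c\approx\pi^2/2-2.59\approx 2.3$. Thus the constant $\pi^2/2$ does not emerge as you anticipate in step 4.

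The same mechanism breaks the paper's key step. Its gap theorem controls $\Delta_k-f_\infty r_k$ only additively, by $\|f_k-f_\infty\|_\infty r_k+O(r_k^2)$. For $k\approx 2/T$ this error is vastly larger than $s(T)\approx e^{-\pi^2/(2T)}$. Equivalently, the linearization $F_k(T)-F_k(T-R_k)\approx f_k(T)R_k$ fails, because $R_k$ is of size $T$ while $\log F_k$ varies on scale $T^2$.

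\paragraph{Numerical check.} I used Tavar\'e's formula for $g_{n,1}$ and the $\eta^3$ transformation
$s(T)=e^{T/8}(2\pi/T)^{3/2}\sum_{n\ge0}(-1)^n(2n+1)e^{-(2n+1)^2\pi^2/(2T)}$.
\begin{itemize}
\item At $T=1/2$: $u\approx 5.5$, $s\approx 2.45\times10^{-3}$, $g_{5,1}\approx 0.081$, $g_{6,1}\approx 0.059$. Hence $\beta_T\approx 25$--$35$, against $\pi^2/(2T)\approx 9.9$.
\item At $T=1/4$: $u\approx 9.5$, $s\approx 3.5\times10^{-7}$, $g_{9,1}\approx 1.15\times10^{-3}$, $g_{10,1}\approx 7.6\times10^{-4}$. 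Hence $\beta_T$ lies between about $2\times10^3$ and $3\times10^3$, against $19.7$.
\end{itemize}
What is provable is a lower bound on $\beta_T$, which is your fallback. That is all the qualitative message ``the improvement is at least of order $1/T$'' needs. The ``$\sim$'' in the statement should not be claimed.
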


\noindent The proof can be found in Appendix~\ref{app:bound-asymptotics}.

To see why the $\Theta(\log(k))$ growth rate is natural for the $k$ asymptotics, note that $g_{k,1}(T)$ is roughly constant and equal to $s(T) > 0$ for all large $k$. Hence in large caterpillar trees, where most of the edges have large descendant counts, most of their bipartitions have $g_{k,1}(T)$ saturating in this way. 

In this setting, we are essentially saying each bipartition corresponds to a $\mathrm{Geometric}(s(T))$ random variable which specifies how many gene trees we need for that specific bipartition to show up. Each such variable has tail $\P(\mathrm{Geo}(s) > m) = (1-s)^{m}$. Solving for $p(m) = k^{-1}$ we see $m = \log(k)/[-\log(1-s)]$, so the maximum of $k$ independent such geometric random variables ought to be roughly of this order.

\section{Conclusion}
\label{sec:conclusion}

We developed topology-free improvements to the bipartition cover bound of
\citet{uricchio2016_bipartition_cover} by identifying two distinct extremal
features of species trees. Caterpillar trees maximize the descendant-count
profiles that enter the union-bound argument, whereas balanced trees
stochastically maximize the number of lineages that remain uncoalesced for a
fixed number of descendants. Combining these observations yields a recursively
computable balanced bound that can improve on the original bound by several
orders of magnitude across the parameter regimes considered in our simulations.

In analyzing coalescence under the MSC, our key technical tools are the balancing lemmas, Lemmas \ref{lem:deterministic_balancing_lemma} and
\ref{lem:balancing_lemma}. These formalize the intuition that, when a fixed collection of 
lineages is distributed more evenly across descendant populations, more lineages tend to remain 
uncoalesced. While short internal branches and anomaly-zone behavior are familiar obstacles to 
species-tree inference, our analysis identifies a distinct topology-dependent mechanism: balanced 
splits systematically delay coalescence, allowing ancestral lineages to persist longer 
and thereby increasing the potential for incomplete lineage sorting.

For fixed branch length, the new bound grows as $\Theta_T(\log k)$, which is the
best possible order within the union-bound framework used here. The bound need
not be globally tight, however, because its two extremal ingredients are attained
by different tree topologies. Our simulations likewise show that some
conservativeness remains, particularly for typical rather than extremal trees.
Incorporating partial information about the species tree topology therefore
provides a natural direction for obtaining sharper finite-sample guarantees.


\newpage
\begin{center}
    \captionsetup{type=figure, justification=raggedright, singlelinecheck=false}
    \includegraphics[height=5cm]{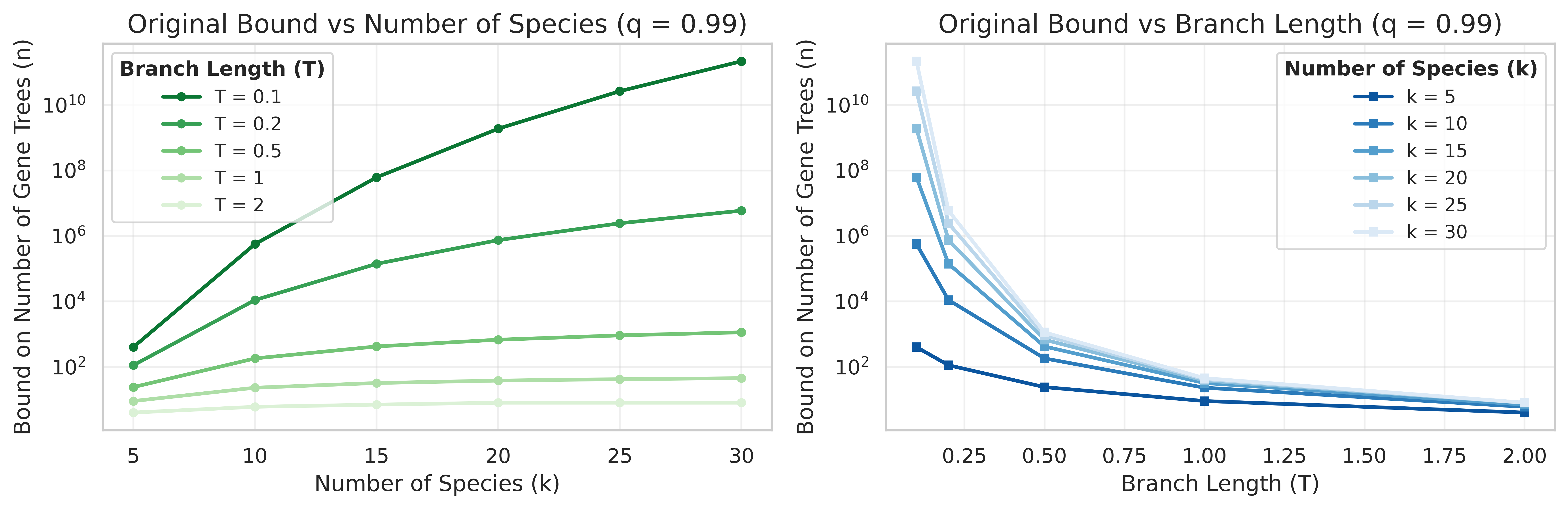}
    
    \includegraphics[height=5cm]{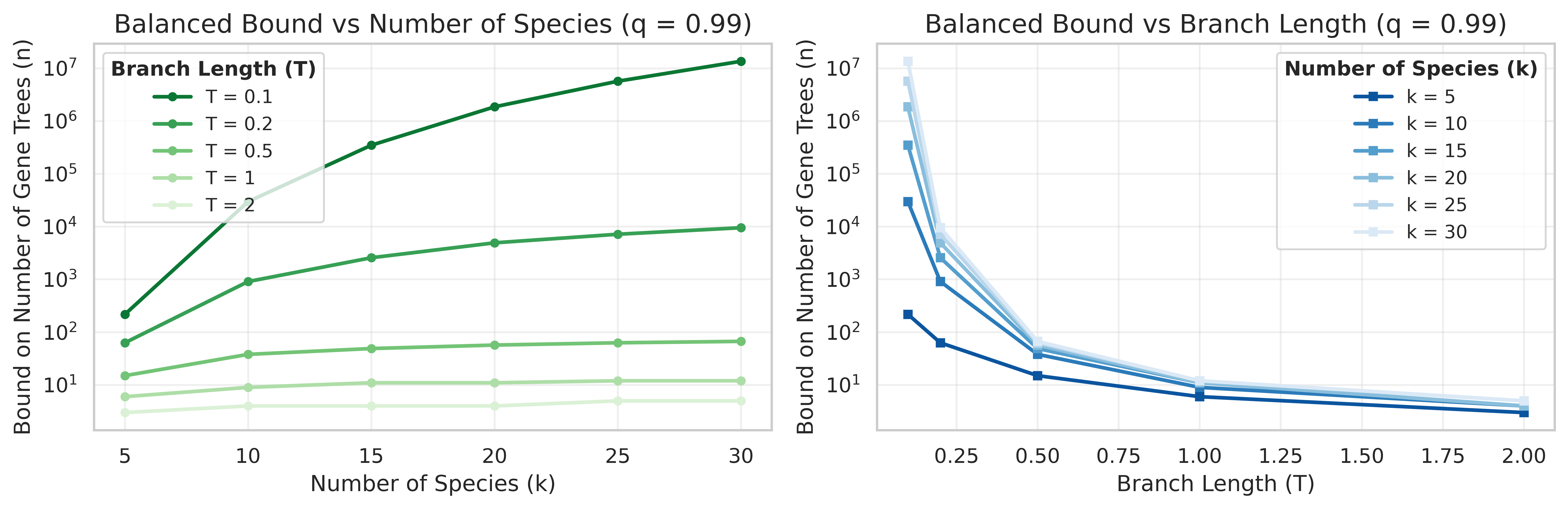}
    \captionof{figure}{Bipartition cover bounds as functions of the two key parameters: $k$, $T_{\min}$. (top) The original bound \citep{uricchio2016_bipartition_cover}. (bottom) Our balanced bound (Theorem~\ref{thm:improved_bipartition_cover_bound_balanced})}
    \label{fig:bound_comparison}
\end{center}

\begin{center}
    \captionsetup{type=figure, justification=raggedright, singlelinecheck=false}
    \includegraphics[height=5cm]{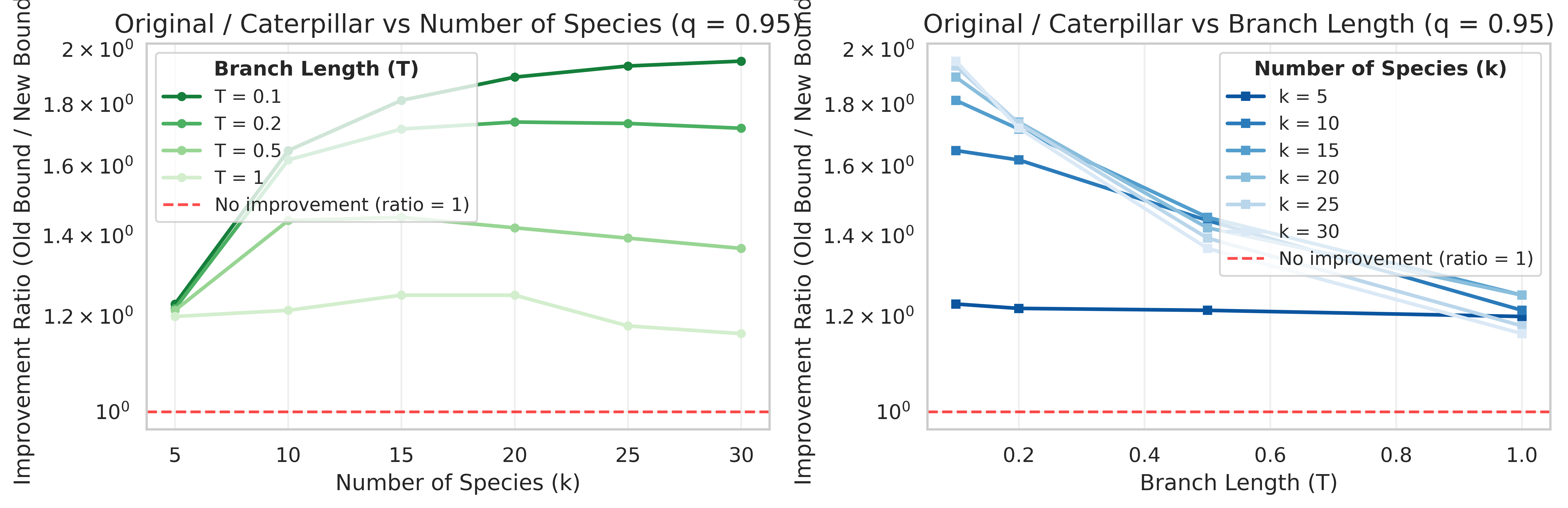}
    
    \includegraphics[height=5cm]{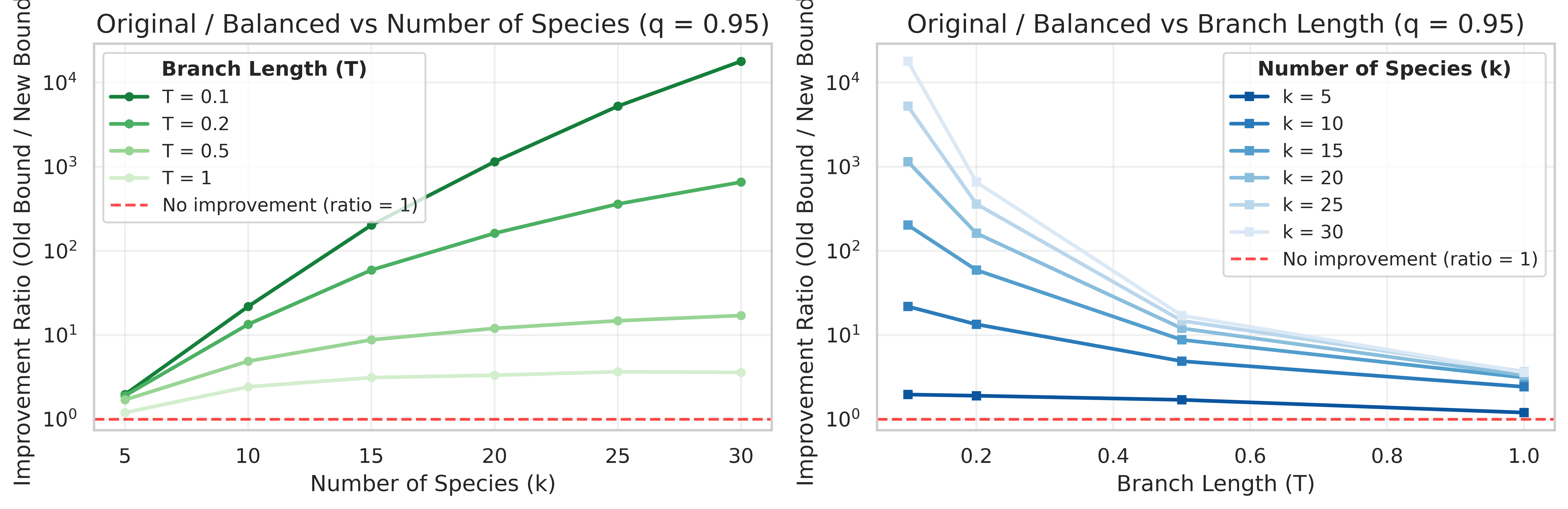}
    \captionof{figure}{Improvement ratios. (top) Improvement of caterpillar bound (Theorem~\ref{thm:caterpillar_bound}) over the original bound (Theorem~\ref{thm:uricchios_bipartition_cover_bound}). (bottom) Improvement of the balanced bound over original}
    \label{fig:improvement_ratios}
\end{center}

\newpage
\begin{center}
    \captionsetup{type=figure, justification=raggedright, singlelinecheck=false}
    \includegraphics[height=5cm]{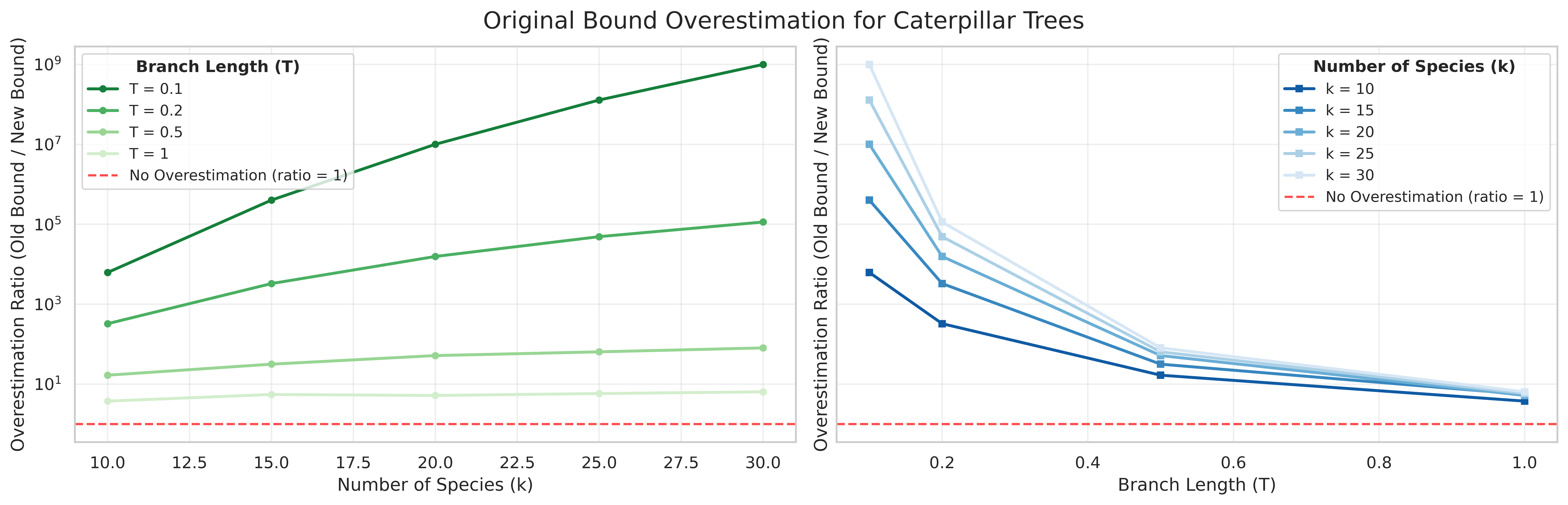}
    
    \includegraphics[height=5cm]{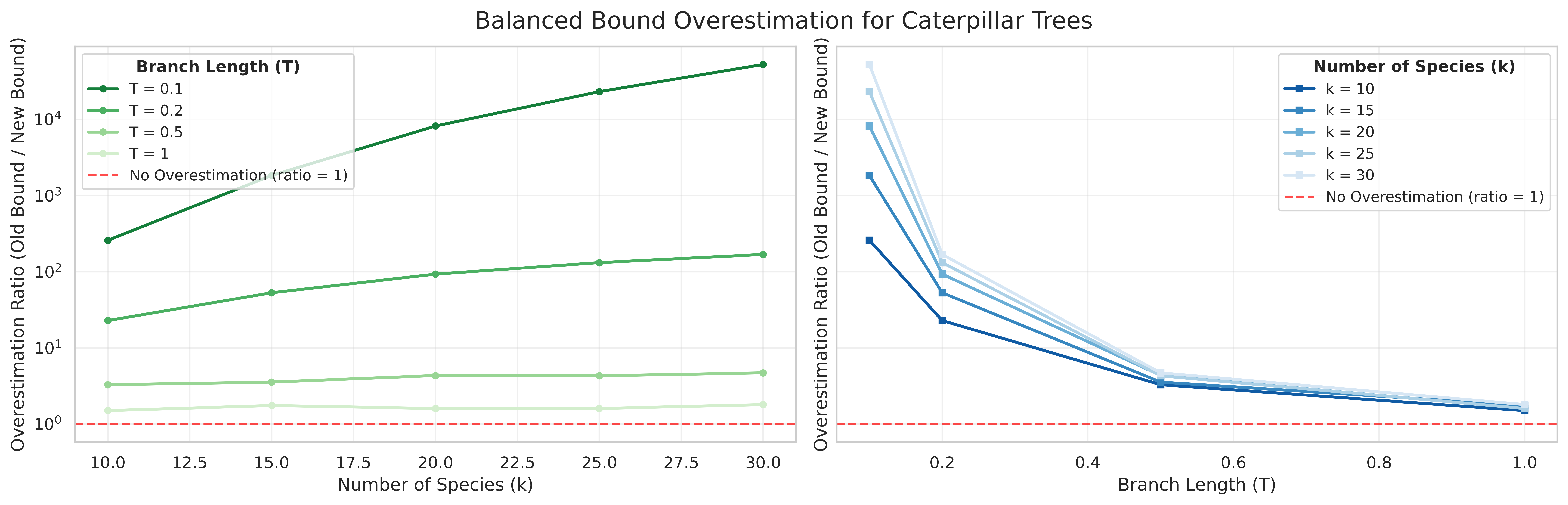}
    \captionof{figure}{Empirical overestimation ratios for caterpillar trees. (top) The original bound (Theorem~\ref{thm:uricchios_bipartition_cover_bound}) and (bottom) our balanced bound (Theorem~\ref{thm:improved_bipartition_cover_bound_balanced})}
    \label{fig:empirical_overestimation_ratios_for_caterpillar_trees}
\end{center}

\begin{center}
    \captionsetup{type=figure, justification=raggedright, singlelinecheck=false}
    \includegraphics[height=5cm]{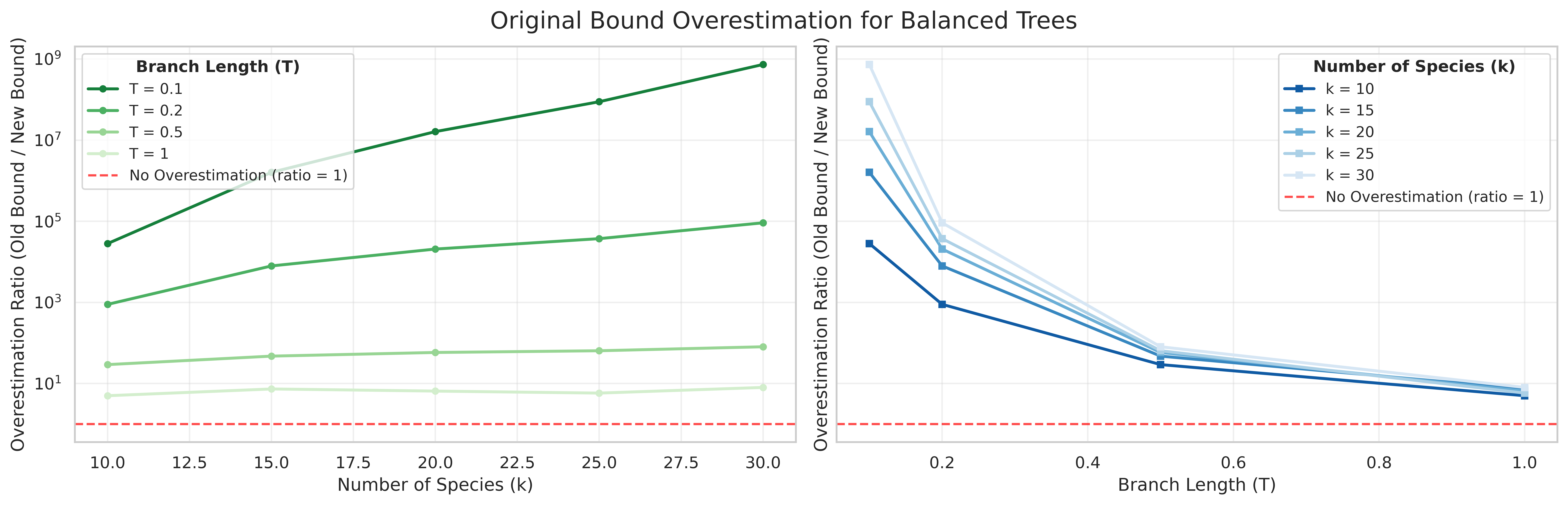}
    
    \includegraphics[height=5cm]{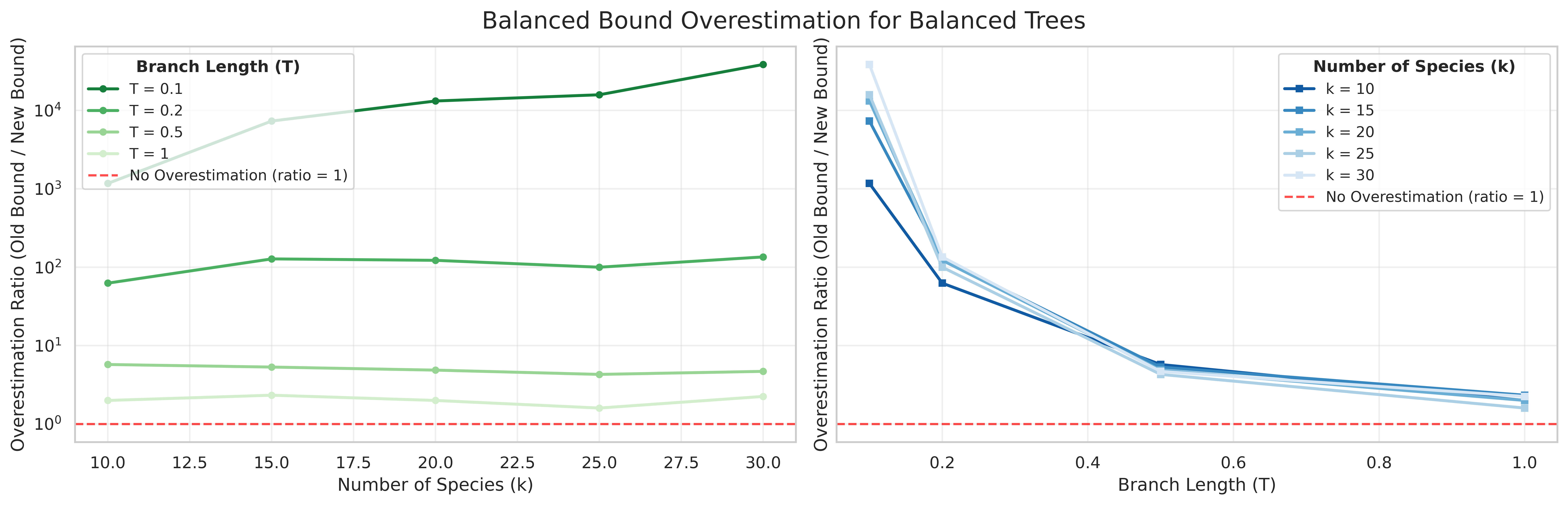}
    \captionof{figure}{Empirical overestimation ratios for balanced trees. (top) The original bound (Theorem~\ref{thm:uricchios_bipartition_cover_bound}) and (bottom) our balanced bound (Theorem~\ref{thm:improved_bipartition_cover_bound_balanced})}
    \label{fig:empirical_overestimation_ratios_for_balanced_trees}
\end{center}

\begin{figure}[htbp]
    \centering
    \captionsetup{justification=raggedright,singlelinecheck=false}
    \includegraphics[height=0.85\textheight,keepaspectratio]{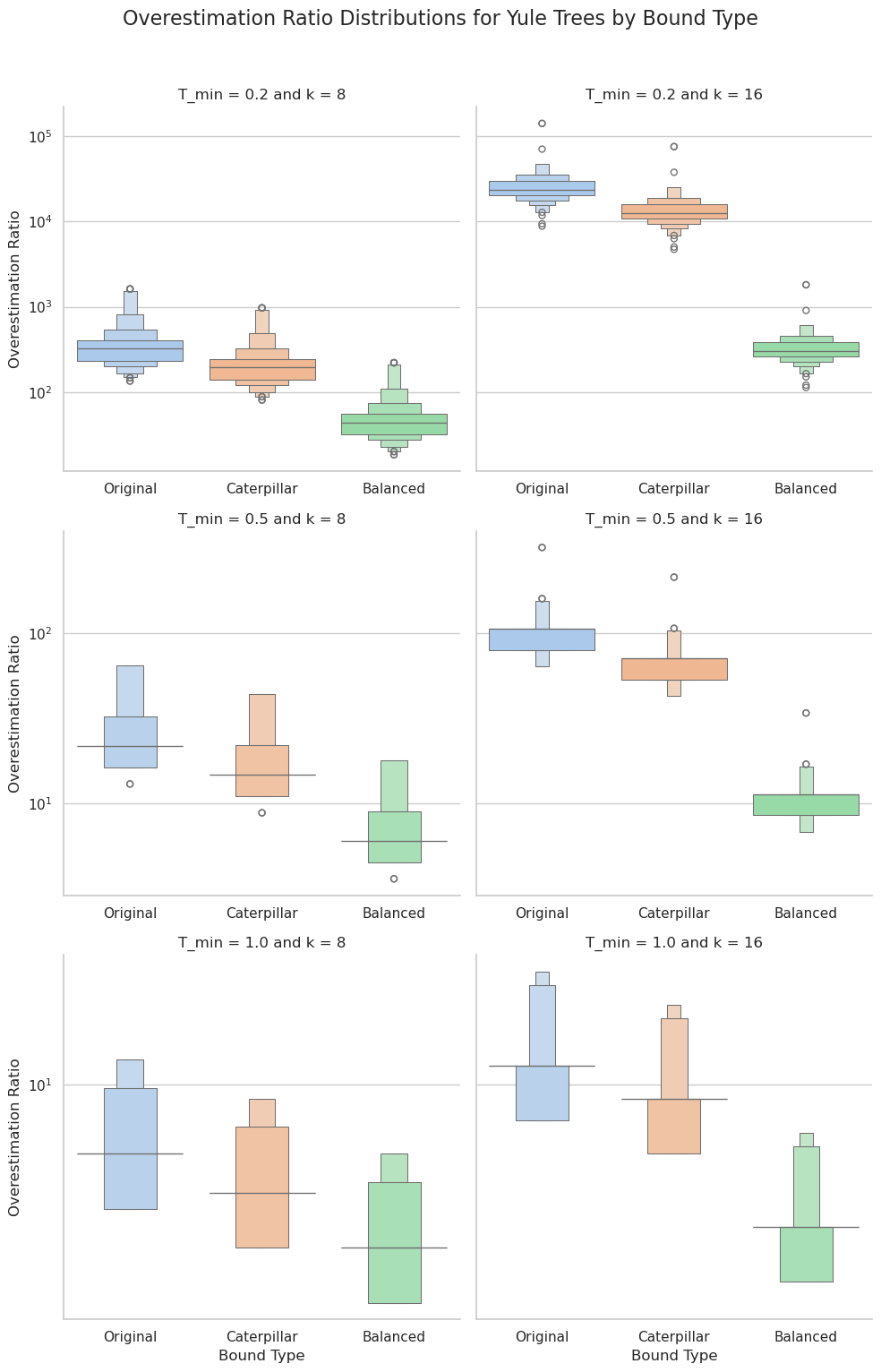}

    \caption{Boxenplots for the distribution of (log) overestimation ratios for Yule trees across different choices of our parameters $T_{\mathrm{min}}, k$. Each row corresponds to a specific choice of $T_{\mathrm{min}} \in \{0.2, 0.5, 1\}$ and each column a choice of $k \in \{8, 16\}$}
    \label{fig:empirical_overestimation_ratios_for_yule_trees}
\end{figure}

\newpage
\appendix
\numberwithin{figure}{section}
\numberwithin{table}{section}
\section{Basic Results}
\label{app:basic-results}

    \subsection{Basic Properties of Log-Concavity}
    \label{app:basic-log-concavity}

    A sequence $\{x_k\}$ is \textit{log-concave} if $x_{k}^2 \geq x_{k-1}x_{k+1}$. We say a random variable $X$ is log-concave if the sequence $x_k = \P(X = k)$ is log-concave. 
    
    In order to apply Theorem \ref{thm:convolutions_and_likelihood_ratio_order}, we will need to determine whether various variables of interest are log-concave. For this purpose, we introduce a few basic closure properties of log-concave random variables. It will turn out to be useful to work with a strengthening of log-concavity. We say a sequence $\{x_k\}$ is \textit{ultra log-concave (ULC)} if the sequence $\{k! \cdot x_k\}$ is log-concave, or equivalently $kx_k^2 \geq (k+1)x_{k-1}x_{k+1}$. Clearly ultra log-concavity implies log-concavity. The following result, and much more, can be found in Chapter 4 of \cite{saumard2014logconcavitystronglogconcavityreview}.

    \begin{theorem}[Pr\'ekopa]
        If $X, Y$ are independent, (ultra) log-concave, integer-valued random variables, then $X+Y$ is (ultra) log-concave. Equivalently, the class of (ultra) log-concave distributions is closed under convolution. 
        \label{thm:prekopas_theorem}
    \end{theorem}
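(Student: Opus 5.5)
The statement to prove is Prékopa's theorem in its discrete form: if $X,Y$ are independent, integer-valued and (ultra) log-concave, then $X+Y$ is (ultra) log-concave. I will treat the two cases separately. In both, the first step is to restrict to the support. A log-concave sequence with no internal zeros (the standard convention) has support equal to an interval of integers, and the support of a convolution of two such sequences is again an interval, namely the Minkowski sum of the two. So throughout I may take $x_k=\P(X=k)$ and $y_k=\P(Y=k)$ positive on intervals, and write $z_n=\sum_k x_k y_{n-k}$.

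\textbf{Log-concave case.} I need to show $z_n^2\geq z_{n-1}z_{n+1}$. The plan is a classical rearrangement argument.
\begin{itemize}
\item Expand both sides as double sums:
\[
z_n^2-z_{n-1}z_{n+1}=\sum_{i,j}x_ix_j\bigl(y_{n-i}y_{n-j}-y_{n-1-i}y_{n+1-j}\bigr).
\]
\item Symmetrize in $(i,j)$. Pairing the term $(i,j)$ with the term $(j+1,i-1)$ converts the expression into a sum over $i\le j$ of terms of the form
\[
(x_ix_j-x_{i-1}x_{j+1})\,(y_{n-i}y_{n-j}-y_{n-i+1}y_{n-j-1}).
\]
\item Sign-check each factor. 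Log-concavity of a sequence is equivalent to $x_ix_j\geq x_{i-1}x_{j+1}$ for all $i\le j$, which follows by telescoping the ratios $x_{k+1}/x_k$, which are nonincreasing. So the first factor is nonnegative. Applying the same fact to $y$ at the indices $n-j\le n-i$ shows the second factor is nonnegative too.
\end{itemize}
Hence every term is nonnegative and the inequality follows.

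\textbf{Ultra log-concave case.} The same direct bookkeeping gets messy, so I will use the standard Liggett/Walkup route instead.
\begin{itemize}
\item Let $a_k=k!\,x_k$ and $b_k=k!\,y_k$; these are log-concave by assumption.
\item Rewrite the convolution as
\[
n!\,z_n=\sum_k \binom{n}{k}a_k b_{n-k},
\]
the binomial convolution of $a$ and $b$.
\item The required statement is that binomial convolution preserves log-concavity. This is Walkup's theorem, equivalently Liggett's 1997 result that ULC is closed under convolution.
\end{itemize}
I would prove this by an induction on $n$, using the Pascal identity $\binom{n}{k}=\binom{n-1}{k}+\binom{n-1}{k-1}$ to express $w_n=n!z_n$ through shifted sums. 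Alternatively, I could follow Liggett and reduce to checking a quadratic form built from pairs $(i,j)$ and $(i',j')$, pairing terms as in the log-concave case but now weighted by binomial coefficients.

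The main obstacle is this ULC step. In the log-concave case the pairing works because the coefficients are uniform, whereas here the binomial weights break the symmetry of the pairing. Following the paper, I would simply cite \cite{saumard2014logconcavitystronglogconcavityreview}, Chapter 4, for this, and write out only the log-concave rearrangement argument in full.
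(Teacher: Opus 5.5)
Your proposal is correct. It differs from the paper mainly in how much is actually proved. The paper gives no argument and simply cites Section~4 of Saumard and Wellner for both halves. You instead prove the log-concave half directly by the classical rearrangement argument. For the ultra log-concave half you pass to $a_k=k!\,x_k$, $b_k=k!\,y_k$ and reduce to the statement that binomial convolution preserves log-concavity (Walkup; equivalently Liggett's closure theorem), which is exactly what the cited survey records.

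So on the ULC side you and the paper rest on the same literature. That is also the half the paper actually invokes: Lemma~\ref{lem:xm_zxm_are_ultra_log_concave} uses ULC closure, and log-concavity of $Z_{X_m}$ is then deduced from ULC. Your sketched Pascal-identity induction would not replace the citation as it stands. It writes $n!\,z_n$ as a sum of two binomial convolutions of shifted sequences, and a sum of log-concave sequences need not be log-concave.

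What your version buys is the explicit no-internal-zeros convention, which is genuinely needed. Under the paper's literal definitions (only $x_k^2\geq x_{k-1}x_{k+1}$, resp.\ $kx_k^2\geq(k+1)x_{k-1}x_{k+1}$) the statement is false. Take $X$ uniform on $\{0,3\}$ and $Y$ uniform on $\{0,1\}$: both satisfy both inequalities, while $X+Y$, uniform on $\{0,1,3,4\}$, violates them at $k=2$. The paper's applications are unaffected, because $X_m$ and $Z_{X_m}$ have interval supports.

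One presentational fix in your log-concave step: the pairing must be done after re-indexing the subtracted sum, so that $z_n^2-z_{n-1}z_{n+1}=\sum_{i,j}(x_ix_j-x_{i-1}x_{j+1})\,y_{n-i}y_{n-j}$. The involution $(i,j)\mapsto(j+1,i-1)$ then does the work:
\begin{itemize}
\item it flips the sign of the $x$-factor;
\item its fixed points $i=j+1$ contribute zero;
\item each two-element orbit contains exactly one pair with $i\le j$.
\end{itemize}
Absolute convergence justifies the rearrangement when the supports are infinite.
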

    
    \noindent The following shows us that Kingman's coalescent preserves log-concavity in our setting of interest.

\begin{theorem}[Preservation of Ultra Log-Concavity]
\noindent Suppose $P_t$ is the transition semigroup of a pure-death process with death rates $\lambda_i$ so that (a) $\lambda_i$ is convex (b) $\eta_i := \lambda_i/i$ is concave and (c) at least one of these is strictly convex/concave. Moreover, suppose that $\pi$ is ultra log-concave with either $\mathrm{supp}(\pi) = \{2,..., M\}$ or $\mathrm{supp}(\pi) = \{1,..., M\}$ for some $M \in \N \cup \{\infty\}$. Then $\pi(t) := \pi P_t$ is ultra log-concave for all $t \geq 0$. In particular, this is true for Kingman's coalescent with $\lambda_i = {i \choose 2}$. 
\label{thm:kingman_preserves_ultralogconcave}
\end{theorem}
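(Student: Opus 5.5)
The plan is to prove the statement directly from the Kolmogorov forward equations, using a ``first crossing time'' maximum-principle argument on the ratio sequence. First I would rewrite ultra log-concavity in ratio form. For a sequence $\pi$ with contiguous support, the inequality $k\pi_k^2\geq (k+1)\pi_{k-1}\pi_{k+1}$ is equivalent to monotonicity of
\[
\rho_k := \frac{k\,\pi_k}{\pi_{k-1}},
\]
namely to $\rho_{k+1}\leq\rho_k$ on the support. Here $\rho_{M+1}=0$ at the top of the support.

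\textbf{Step 1 (support).} For a pure-death process, $\pi(t)$ has contiguous support for every $t$. If $\mathrm{supp}(\pi)=\{2,\dots,M\}$, then for $t>0$ the support is $\{2,\dots,M\}$ or $\{1,\dots,M\}$ according to whether $\lambda_2=0$ or not. Once a state is charged it stays charged. So the ratios $\rho_k(t)$ are well defined and smooth on the interior of the support for $t>0$, and they tend to the initial ratios as $t\downarrow 0$. Only the newly charged state $k=1$ has $\rho_2(t)\to\infty$ as $t\downarrow0$, which is harmless near $t=0$: there the constraint $\rho_2\leq\rho_3$ simply does not yet bind.

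\textbf{Step 2 (dynamics of the ratios).} The forward equation is
\[
\dot\pi_k=-\lambda_k\pi_k+\lambda_{k+1}\pi_{k+1}.
\]
Writing $\lambda_{k+1}\pi_{k+1}/\pi_k=\eta_{k+1}\rho_{k+1}$ gives
\[
\frac{d}{dt}\log\rho_k=-(\lambda_k-\lambda_{k-1})+\eta_{k+1}\rho_{k+1}-\eta_k\rho_k .
\]
Suppose $t^*$ were the first time some $\rho_{k+1}$ overtakes $\rho_k$. At $t^*$ we have $\rho_{k+1}=\rho_k=:\rho$ and $\rho_{k+2}\leq\rho$ (by minimality of $t^*$), and $\eta\geq 0$. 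Hence
\[
\frac{d}{dt}\bigl(\log\rho_{k+1}-\log\rho_k\bigr)\Big|_{t^*}\;\leq\;-\bigl(\lambda_{k+1}-2\lambda_k+\lambda_{k-1}\bigr)+\rho\,\bigl(\eta_{k+2}-2\eta_{k+1}+\eta_k\bigr).
\]
By (a) the first bracket is $\geq 0$, and by (b) the second is $\leq0$. By (c) one of them is strict, so the right side is strictly negative. Therefore the difference $\log\rho_{k+1}-\log\rho_k$ cannot cross from $\leq0$ to $>0$ at $t^*$, which is a contradiction. The boundary index $k+1=M$, where $\rho_{M+1}=0$, is covered by the same computation with the $\eta_{k+2}$ term dropped. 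I would make the crossing argument rigorous in the standard way: take $t^*=\inf\{t:\max_k(\rho_{k+1}-\rho_k)>0\}$ and use finitely many indices when $M<\infty$.

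\textbf{Step 3 (infinite $M$ and Kingman).} When $M=\infty$, I would truncate: take $\pi^{(N)}$ proportional to $\pi\mathbf 1_{\{\leq N\}}$, which is still ultra log-concave with finite support. Then $\pi^{(N)}P_t\to\pi P_t$ pointwise, because the death process started below $N$ never exceeds $N$ and $\pi^{(N)}\to\pi$ in total variation. The ultra log-concavity inequalities are closed under pointwise limits, which finishes this case. Finally, for Kingman's coalescent $\lambda_i=\binom i2$ is strictly convex ($\Delta^2\lambda=1$) and $\eta_i=(i-1)/2$ is linear, hence concave, so the theorem applies.

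I expect the main obstacle to be Step 2's rigor at degenerate moments: the starting time when a new state gets charged, and the non-strict ties in the initial data, where $\rho_{k+1}=\rho_k$ already at $t=0$. The strictness in (c) is what handles ties. I would first try arguing that the strict negative derivative pushes the difference strictly below zero immediately after $t=0$. If that proves delicate, the fallback is to perturb $\pi$ to a strictly ultra log-concave initial law and pass to the limit.
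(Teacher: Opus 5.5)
Your proposal is correct and is essentially the paper's proof. Your $\rho_k=k\pi_k/\pi_{k-1}$ satisfies $\rho_k/\rho_{k+1}=r_k$ and $\eta_k\rho_k=v_k$ in the paper's notation, and the forward-equation computation, the first-touch argument (convexity of $\lambda$, concavity of $\eta$, strictness from (c)) and the handling of the newly charged state $1$ all coincide with it. Your truncation step for $M=\infty$ and your finite-index crossing time are extra rigor the paper omits; the latter already settles ties at $t=0$, since the strictly negative derivative applies even when $t^*=0$. One small slip: the constraint near $t=0$ is $\rho_3\leq\rho_2$ (not $\rho_2\leq\rho_3$), and it holds because $\rho_2\to\infty$.
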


\begin{proof}[Proof of Theorem \ref{thm:kingman_preserves_ultralogconcave}]
     Recall Kolmogorov's forward equations imply
    \[\frac{d}{dt} \pi_i(t) = \lambda_{i+1}\pi_{i+1}(t) - \lambda_i \pi_i(t).\]
    \noindent First, we handle the interior. For $i \in \mathrm{Int}(\mathrm{supp}(\pi))$ we can define the ratios
    \[v_i := \lambda_i \cdot \frac{\pi_i}{\pi_{i-1}}, \qquad r_i := \frac{i\pi_i^2}{(i+1)\pi_{i-1}\pi_{i+1}}  = \frac{v_i}{v_{i+1}} \cdot \frac{\eta_{i+1}}{\eta_i}.\]
    \noindent From here, using the forward equations we can see
    \begin{align*}
        \frac{\dot{r_i}}{r_i} & = \frac{d}{dt} \log(r_i)\\
        & = 2 \frac{\dot{\pi_i}}{\pi_i} - \frac{\dot{\pi}_{i-1}}{\pi_{i-1}} - \frac{\dot{\pi}_{i+1}}{\pi_{i+1}}\\ 
        & = \left(-2\lambda_i + \lambda_{i-1} + \lambda_{i+1}\right) + 2\lambda_{i+1}\frac{\pi_{i+1}}{\pi_i} - \lambda_i\frac{\pi_i}{\pi_{i-1}} - \lambda_{i+2} \frac{\pi_{i+2}}{\pi_{i+1}}\\
        & = \left(\lambda_{i-1} -2\lambda_i + \lambda_{i+1}\right) + 2v_{i+1} - v_i - v_{i+2}.
    \end{align*}
    \noindent By assumption, $r_i(0) \geq 1$ for all $i$. Since $r_i(t)$ is continuous in $t$, the only way for $r_i(t) < 1$ to occur is if $r_i(s) = 1$ for some $0 \leq s < t$. We will show at this time that $\dot{r}_i(s) > 0$, and hence we can never drop below one. Suppose $r_i(s) = 1$ and $r_k(s) \geq 1$ for all other $k$. From these, we see respectively
    \[v_i = \frac{\eta_i}{\eta_{i+1}} \cdot v_{i+1}, \qquad v_{i+2} \leq \frac{\eta_{i+2}}{\eta_{i+1}} \cdot v_{i+1}.\]
    \noindent Using these two bounds, we can see
    \[\frac{\dot{r}_i}{r_i} \geq (\lambda_{i-1} - 2\lambda_i + \lambda_{i+1}) - \left[\eta_{i} - 2\eta_{i+1} + \eta_{i+2}\right] \cdot \frac{v_{i+1}}{\eta_{i+1}}.\]
    \noindent Since $\lambda_i$ is convex, the first term is nonnegative. Since $\eta_i$ is concave, the second term is nonnegative. Moreover, by assumption one of these must be strictly positive, and thus $\dot{r}_i/r_i > 0$ as desired. 
    
    Now, we handle the boundaries. In the case $M < \infty$, we trivially have 
        \[M\pi^2_M(t) \geq (M+1)\pi_{M-1}(t)\pi_{M+1}(t) = 0\] 
    \noindent as $\pi_{M+1}(t) = 0$ for all $t$ -- in a pure-death process on $\N$ the support can never increase. For the lower boundary, first consider the case $\mathrm{supp}(\pi) = \{1,..., M\}$. Then again we trivially have $\pi_1(t) \geq 2\pi_0(t)\pi_2(t) = 0$ for all $t$ as we can never drop below one in a pure-death process on $\N$. In the case $\mathrm{supp}(\pi) = \{2,..., M\}$ note that $2\pi_2(0) > 3\pi_1(0)\pi_3(0) = 0$. By continuity, we must have $2\pi_2(t) > 3\pi_1(t) \pi_3(t)$ for small enough $t > 0$. For such a $t$, we see that $\pi(t)$ is ultra log-concave. Moreover, since $\pi_1(t) > 0$ for any positive $t$ we see $\mathrm{supp}(\pi(t)) = \{1,..., M\}$ so we can now apply the previous case to show ultra log-concavity is preserved for all further times.
\end{proof}

\subsection{Basic Properties of $g_{i,j}(T)$ and $Z^T_i$}
\label{app:basic-coalescent-properties}

We will start by proving some basic properties about the function $g_{i,j}(T)$ and the associated random variables $Z_i^T$ with distribution $\P(Z_i^T = j) = g_{i,j}(T)$. Recall that $Z^T_i$ gives the distribution of the number of lineages remaining after starting with $i$ lineages and running Kingman's coalescent for $T$ seconds.

Moreover, recall $X_{\cB_k}$ denotes the number of lineages reaching the root of a balanced tree with $k$ leaves and branch lengths all equal to $T$. For convenience, we denote $X_k := X_{\cB_k}$. Using the above, we will show that all the variables of interest in this paper are log-concave.

\begin{lemma}
The random variables $X_m, Z_{X_m}$ are ultra log-concave, and hence log-concave.
\label{lem:xm_zxm_are_ultra_log_concave}
\end{lemma}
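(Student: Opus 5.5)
We will prove both claims together by strong induction on $m$, using the recursive structure of the balanced tree. Recall that $X_m$ is the number of lineages reaching the root of $\cB_m$, i.e. entering the branch above the root. The key relation, already used in the reduction proof of Theorem~\ref{thm:improved_bipartition_cover_bound_balanced}, is
\[
X_m \overset{d}{=} Z_{X_{\lfloor m/2\rfloor}} + Z'_{X_{\lceil m/2\rceil}}, \qquad m\geq 2,
\]
with $X_1\equiv 1$, and with the two summands independent under the MSC. The variable $Z_{X_m}$ is the law obtained by running Kingman's coalescent for time $T$ from the initial distribution $\pi$ of $X_m$, that is, $\pi P_T$. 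So there are only two mechanisms to control: convolution, handled by Theorem~\ref{thm:prekopas_theorem}, and evolution under Kingman's semigroup, handled by Theorem~\ref{thm:kingman_preserves_ultralogconcave}.

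\textbf{Base case.} $X_1\equiv 1$ is a point mass at $1$, which is trivially ultra log-concave: every inequality $k x_k^2\geq (k+1)x_{k-1}x_{k+1}$ has right-hand side zero. Its support is $\{1\}$, of the form $\{1,\dots,M\}$ with $M=1$. Hence Theorem~\ref{thm:kingman_preserves_ultralogconcave} gives that $Z_{X_1}=Z_1\equiv 1$ is ultra log-concave (trivially so).

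\textbf{Inductive step.} Assume $X_j$ and $Z_{X_j}$ are ultra log-concave for all $j<m$. Then $Z_{X_{\lfloor m/2\rfloor}}$ and $Z'_{X_{\lceil m/2\rceil}}$ are independent and ultra log-concave, so by Theorem~\ref{thm:prekopas_theorem} their sum $X_m$ is ultra log-concave. To pass to $Z_{X_m}$ we apply Theorem~\ref{thm:kingman_preserves_ultralogconcave} with $\lambda_i=\binom{i}{2}$, which the theorem already covers. Its remaining hypothesis is a support condition: $\mathrm{supp}(X_m)$ must be $\{1,\dots,M\}$ or $\{2,\dots,M\}$. We establish this inductively as part of the claim. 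For $m\geq2$ and $T>0$, each $Z_{X_j}$ with $j\geq 1$ has support $\{1,\dots,j\}$, because any number of lineages between $1$ and the starting count survives with positive probability and $X_j=j$ has positive probability. Hence $X_m$ has support $\{2,\dots,m\}$, an interval of the required form. (For $T=0$ everything is deterministic and trivial.) This closes the induction. Log-concavity then follows from ultra log-concavity, as already noted in the paper.

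\textbf{Main obstacle.} The only delicate point is the support bookkeeping. Ultra log-concavity as defined via $k x_k^2\geq (k+1)x_{k-1}x_{k+1}$ does not by itself preclude internal zeros in a way compatible with Theorem~\ref{thm:kingman_preserves_ultralogconcave}, so we must carry the "contiguous support starting at $1$ or $2$" property along the induction. We would verify it via the positivity $g_{ij}(T)>0$ for $1\leq j\leq i$, $T>0$. This positivity is immediate from the pure-death description: the path $i\to i-1\to\cdots\to j$ followed by no further jump has positive probability.
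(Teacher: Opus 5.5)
Your proposal is correct and follows the paper's proof essentially verbatim: induction on $m$ via $X_m \overset{d}{=} Z_{X_{\lfloor m/2\rfloor}} + Z'_{X_{\lceil m/2\rceil}}$, Pr\'ekopa's theorem for the convolution step, and Theorem~\ref{thm:kingman_preserves_ultralogconcave} to pass from $X_m$ to $Z_{X_m}$. Your check that $\mathrm{supp}(X_m)=\{2,\dots,m\}$, using positivity of $g_{ij}(T)$, supplies a fact the paper only asserts; it also gives the summands contiguous support, which the convolution-closure result implicitly needs under the inequality-only definition of ultra log-concavity.
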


\begin{proof}
    We will prove this by induction on $m$. Trivially $X_1 = Z_1 \equiv 1$ are ultra log-concave. Now suppose for induction that $X_i, Z_{X_i}$ are log-concave for $i < m$. Note
    \[X_m = Z_{X_{\lfloor m/2 \rfloor}} + Z_{X_{\lceil m/2\rceil}}.\]
    \noindent By Pr\'ekopa's Theorem \ref{thm:prekopas_theorem}, the sum of independent ultra log-concave distributions is ultra log-concave, so $X_m$ is ultra log-concave. Since $\mathrm{supp}(X_m) = \{2,..., m\}$, by Theorem \ref{thm:kingman_preserves_ultralogconcave}, Kingman's coalescent preserves ultra log-concavity, so $Z_{X_m}$ is ultra log-concave. This completes the induction. 
\end{proof}

\noindent Now we show our variables $Z_i^T$ have some simple monotonicity properties.

\begin{lemma}
    Let $i \leq j$ and $t \leq T$. Then $Z_i^T \leq_{st} Z_j^T$ and $Z_{i}^T \leq_{st} Z_i^t$.
    \label{lem:cdf_of_Zi}
\end{lemma}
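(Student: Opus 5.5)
Both claims will follow from a coupling argument rather than from the explicit formula \eqref{eqn:def_of_g_ij}, using that Kingman's coalescent is a pure-death Markov chain on $\N$ with rates $\lambda_n = {n \choose 2}$ and only downward jumps of size one.

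\emph{Second claim ($Z_i^T \leq_{st} Z_i^t$ for $t \leq T$).} Run a single coalescent started from $i$ lineages and let $N(s)$ be the number of lineages at time $s$. Then $Z_i^t \overset{d}{=} N(t)$ and $Z_i^T \overset{d}{=} N(T)$. A pure-death path is nonincreasing, so $N(T) \leq N(t)$ pathwise, which gives the dominance.

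\emph{First claim ($Z_i^T \leq_{st} Z_j^T$ for $i \leq j$).} I will construct two chains $N^{(i)}$ and $N^{(j)}$ on a common probability space with $N^{(i)}(s) \leq N^{(j)}(s)$ for all $s$. Run them independently until the first time they are equal, and identically afterward. By the strong Markov property each marginal is still Kingman's coalescent from its starting state. Since jumps are of size one and both chains are continuous-time, almost surely they never jump simultaneously. The upper chain therefore cannot pass below the lower one without first landing on it, at which point the two chains merge. This ordering at time $T$ yields the first claim.

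An alternative for the first claim: condition the chain from $j$ on the hitting time $\sigma$ of state $i$. At time $\sigma$ it restarts from $i$ and then decreases monotonically. Combining this with the second claim gives $N^{(j)}(T) \geq_{st} Z_i^{T-\sigma} \geq_{st} Z_i^T$ on the event $\sigma \leq T$. On the complement, $N^{(j)}(T) > i \geq Z_i^T$ trivially.

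The only delicate point is the coupling and no-crossing argument, specifically that simultaneous jumps occur with probability zero. The hitting-time version avoids this by relying only on the strong Markov property and the second claim, so I would present that version.
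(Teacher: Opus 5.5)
Your proposal is correct and is essentially the paper's argument. Monotonicity in time comes from the pathwise single-chain coupling, and monotonicity in the starting count comes from a strong-Markov restart combined with that time-monotonicity, with your event $\{\sigma > T\}$ playing the role of the paper's convention $\P(Z_{i-1}^{T-t}\geq z)=1$ for $t>T$ (and $z\leq i$). The only cosmetic difference is that the paper conditions on the first coalescence time to compare $i$ with $i-1$ and then iterates, whereas you condition directly on the hitting time of state $i$ from $j$.
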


\noindent Heuristically this just states the obvious: the more lineages we start with at the beginning of a branch and the less time we have to coalesce, the more lineages we tend to end with. 

\begin{proof}[Proof of Lemma \ref{lem:cdf_of_Zi}] Again the fact the function is decreasing in $T$ follows immediately via a coupling argument. To see that it is increasing in $i$, we just condition on the first coalescence event. Assume $z \leq i$, otherwise both $\P(Z_i^T \geq z) = \P(Z_{i-1} \geq z) = 0$. Note if $\tau \sim \exp\left({i \choose 2}\right)$ is the time of the first coalescent event

\[\P(Z_i^T \geq z) = \int_0^\infty \P(Z_{i-1}^{T-t} \geq z) \tau(dt) \geq \int_0^\infty \P(Z_{i-1}^T \geq z) \; \tau(dt) = \P(Z_{i-1}^T \geq z)\]

\noindent where we let $\P(Z^{T}_{i-1} \geq z) = 1$ if $T < 0$. This gives $Z_i^T \geq Z_{i-1}^T$ stochastically as desired.
\end{proof}

\noindent Note that $g_{i1}(T) = 1 - \P(Z_i^T \geq 2)$, from which we get the immediate corollary:

\begin{corollary}
    The function $g_{i1}(T)$ is decreasing in $i$ and increasing in $T$.
    \label{corr:g_i1_is_decreasing}
\end{corollary}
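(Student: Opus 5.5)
The statement follows almost directly from Lemma~\ref{lem:cdf_of_Zi}. The plan is to express $g_{i1}(T)$ as the complement of a tail probability of $Z_i^T$, and then to transfer the two stochastic orderings of that lemma into monotonicity of $g_{i1}$.

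First, since $Z_i^T$ takes values in $\{1,\dots,i\}$ (a pure-death process started at $i$ never drops below one lineage), we have $\P(Z_i^T = 1) = 1 - \P(Z_i^T \geq 2)$. Hence $g_{i1}(T) = 1 - \P(Z_i^T \geq 2)$, which is the identity noted just before the corollary.

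Second, monotonicity in $i$. For $i \leq j$, Lemma~\ref{lem:cdf_of_Zi} gives $Z_i^T \leq_{st} Z_j^T$. By the definition of first-order stochastic dominance, applied at the threshold $t = 1$ (equivalently, the event $\{Z \geq 2\}$ for integer-valued variables), this yields $\P(Z_i^T \geq 2) \leq \P(Z_j^T \geq 2)$. Therefore $g_{i1}(T) \geq g_{j1}(T)$, so $g_{i1}$ is (weakly) decreasing in $i$.

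Third, monotonicity in $T$. For $t \leq T$, the same lemma gives $Z_i^T \leq_{st} Z_i^t$. Hence $\P(Z_i^T \geq 2) \leq \P(Z_i^t \geq 2)$, that is, $g_{i1}(t) \leq g_{i1}(T)$, so $g_{i1}$ is increasing in $T$.

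There is no real obstacle here. The only point needing care is whether ``decreasing'' is meant strictly. If strictness is required, I would strengthen the conditioning argument in the proof of Lemma~\ref{lem:cdf_of_Zi}: for $i \geq 2$ and $T > 0$, the first coalescence time $\tau$ falls in $(0,T)$ with positive probability, and on that event $\P(Z_{i-1}^{T-\tau} \geq 2) > \P(Z_{i-1}^{T} \geq 2)$ whenever $i-1 \geq 2$. This gives strict inequality. The weak version, however, is all that is used downstream, namely in Corollary~\ref{corr:bipartitions_corollary} and Theorem~\ref{thm:caterpillar_bound}, where only a nondecreasing $f$ is needed.
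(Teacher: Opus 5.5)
Your proof is correct and is exactly the paper's argument: the paper notes $g_{i1}(T) = 1 - \P(Z_i^T \geq 2)$ and reads off both monotonicity statements directly from the two stochastic orderings in Lemma~\ref{lem:cdf_of_Zi}. Your optional strictness remark is also fine; for $i=2$ versus $i=1$ the strict inequality comes from the event $\tau > T$ rather than $\tau < T$, but, as you say, only the weak version is used later.
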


 Thus $g_{i,j}(T)$ has some natural monotonicity properties. The following Lemmas show that most of the variability in $g_{i,j}(T)$ occurs when both $T$ and $i$ are small. 

    \begin{lemma}
        For fixed $n,k$, the function $\P(Z^T_n = k) = g_{n,k}(T)$ is log-concave in $T$. 
    \label{lem:log_concavity_gi(T)_in_T}
    \end{lemma}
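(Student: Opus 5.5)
The plan is to write $g_{n,k}(T)$ as a convolution of log-concave functions of $T$ and then invoke the continuous version of Pr\'ekopa's theorem, the analogue of Theorem~\ref{thm:prekopas_theorem} for densities (Chapter~3 of \cite{saumard2014logconcavitystronglogconcavityreview}). Let $\lambda_i := {i \choose 2}$. Under Kingman's coalescent started from $n$ lineages, the holding times $\tau_n, \tau_{n-1}, \dots, \tau_2$ in states $n, n-1, \dots, 2$ are independent with $\tau_i \sim \mathrm{Exp}(\lambda_i)$. State $1$ is absorbing, so we formally set $\lambda_1 = 0$.

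First dispose of the trivial case $k = n$. Here $g_{n,n}(T) = e^{-\lambda_n T}$, which is log-linear and hence log-concave. For $k < n$, let $S := \sum_{i=k+1}^n \tau_i$ be the hitting time of state $k$. The event $\{Z_n^T = k\}$ is exactly $\{S \leq T < S + \tau_k\}$. Conditioning on $S$ and using the independence of $\tau_k$ from $S$ gives
\[
g_{n,k}(T) = \int_0^T f_S(s)\, e^{-\lambda_k (T-s)}\, ds = (f_S * e_k)(T),
\qquad e_k(t) := e^{-\lambda_k t}\,\ind\{t \geq 0\}.
\]
When $k = 1$ we have $e_1 = \ind_{[0,\infty)}$, so this formula reduces to $g_{n,1}(T) = \P(S \leq T)$.

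Next, check that both factors are log-concave functions on $\R$:
\begin{itemize}
\item[(a)] Each exponential density $\lambda e^{-\lambda t}\ind\{t\geq 0\}$ is log-concave, since its logarithm is linear on a convex set and $-\infty$ elsewhere.
\item[(b)] Consequently $f_S$, a convolution of such densities, is log-concave by Pr\'ekopa's theorem.
\item[(c)] The kernel $e_k$ is log-concave for the same reason as in (a); this includes $k=1$, where it is the indicator of a half-line.
\end{itemize}
Applying Pr\'ekopa's theorem once more to $f_S * e_k$ then gives log-concavity of $T \mapsto g_{n,k}(T)$ on $\R$, and in particular on $T \geq 0$. For $k=1$ this is the familiar fact that the CDF of a log-concave density is log-concave.

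The only real obstacle is justifying the convolution step. Prékopa's theorem is stated earlier in the paper for integer-valued variables, so I would either cite the continuous version (Prékopa--Leindler), or observe directly that
\[
(f * g)(T) = \int \ind\{(s,T)\}\, f(s)\, g(T-s)\, ds
\]
is the marginal of a jointly log-concave function of $(s,T)$. Such marginals are log-concave by Pr\'ekopa's integration theorem. A minor remaining point is that $e_k$ and $f_S$ are not strictly positive, which is harmless because log-concavity allows the value $0$ on convex sets.
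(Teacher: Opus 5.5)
Your proof is correct and is essentially the paper's argument with the induction unrolled: the paper conditions on the first coalescence to write $g_{n,k}(T)=\int_0^T g_{n-1,k}(T-t)\,{n \choose 2}e^{-{n \choose 2}t}\,dt$ and applies Pr\'ekopa's theorem inductively in $n$, and iterating that recursion gives exactly your representation $f_S * e_k$. Your version is slightly more careful on two points. You start from the correct base $g_{k,k}(T)=e^{-\lambda_k T}$, whereas the paper's recursion drops the no-coalescence term, so it fails at $k=n$ and its base case $n=1$ does not propagate for $k\geq 2$. You also explicitly invoke the continuous form of Pr\'ekopa's theorem rather than the discrete version stated in the appendix.
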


    \begin{proof}[Proof of Lemma \ref{lem:log_concavity_gi(T)_in_T}]
    We prove this by induction on $n$. For $n=1$ we see $g_{nk}(T) = \ind\{k=1\}$ which is trivially log-concave. For the inductive step, recall that by conditioning on the first coalescent event and using the memoryless property of Kingman's coalescent, we observed
        \[g_{n,k}(T) = \int_0^T g_{n-1, k}(T-t) {n \choose 2} \exp\left(-{n \choose 2}t\right) \; dt. \]
    Namely, we are just convolving the log-concave function $g_{n-1,k}$ with the log concave function $\exp(-{n\choose 2}t)$. By Pr\'ekopa's theorem \ref{thm:prekopas_theorem}, convolutions preserve log-concavity. Thus $g_{n, k}$ is log-concave. 
    \end{proof}

    \begin{lemma}
        For fixed $T$, the function $i \mapsto g_{i,1}(T)$ is convex.
        \label{lem:g_i1_is_convex}
    \end{lemma}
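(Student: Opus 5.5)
The plan is to prove convexity as a statement about first differences. Set $D_i := g_{i,1}(T) - g_{i+1,1}(T)$, which is nonnegative by Corollary~\ref{corr:g_i1_is_decreasing}. Convexity of $i \mapsto g_{i,1}(T)$ is equivalent to $D_{i+1} \leq D_i$ for all $i \geq 1$. I would obtain this inequality from a coupling based on the sampling consistency of Kingman's coalescent, not from the explicit formula \eqref{eqn:def_of_g_ij}.

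\textbf{Consistency.} The key input is the standard projective property of Kingman's coalescent. Run the coalescent on labelled lineages $\{1,\dots,N\}$ and restrict the induced partition process to any subset of $m$ labels. The result is again Kingman's coalescent on $m$ lineages, with each pair merging at rate one. I would justify this directly from the pairwise exponential clocks: a merger involving a lineage outside the subset does not change the restricted partition, and the blocks of the restriction merge exactly when some pair of their representatives' clocks ring.

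\textbf{Interpreting $D_i$.} Run the coalescent on labels $\{1,\dots,i+1\}$ for time $T$. Let $A_i$ be the event that labels $1,\dots,i$ lie in a single block at time $T$. By consistency, $\P(A_i) = g_{i,1}(T)$. The event that all $i+1$ labels form one block is contained in $A_i$ and has probability $g_{i+1,1}(T)$. Hence
\[
D_i = \P\bigl(\text{labels } 1,\dots,i \text{ share a block, and label } i+1 \text{ is not in it}\bigr).
\]
By exchangeability of labels, $D_i$ equals the same probability with label $i+1$ replaced by any other distinguished label.

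\textbf{The inclusion.} Now run the coalescent on $\{1,\dots,i+2\}$. By the previous paragraph,
\[
D_{i+1} = \P(\text{labels } 1,\dots,i+1 \text{ share a block}, \ i+2 \text{ not in it}).
\]
On this event, labels $1,\dots,i$ share a block and label $i+2$ is not in that block. By consistency applied to the subset $\{1,\dots,i,i+2\}$ together with exchangeability, the probability of this larger event is exactly $D_i$. Therefore $D_{i+1} \leq D_i$, which gives convexity.

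\textbf{Main obstacle.} The only real care is needed in the consistency and exchangeability steps, namely that restricting to a subset of labels really gives Kingman's coalescent on that subset, run for the same time $T$. I expect to handle this with the exponential-clock construction sketched above. A purely analytic route that writes $S_{i+1} = S_i + \tau_{i+1}$ and compares $D_i$ and $D_{i+1}$ pointwise in $S_i$ fails, because the pointwise inequality is false. That failure is why I would commit to the labelled coupling.
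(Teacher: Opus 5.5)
Your proof is correct, and it takes a genuinely different route from the paper.

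\textbf{The paper's route.} The paper argues in two lines. First, the matrix $P_T=[g_{ij}(T)]$ is totally positive (Karlin--McGregor). Second, a convexity-preservation theorem for such matrices, applied to the convex vector $e_1$, gives convexity of $P_Te_1=(g_{i,1}(T))_i$.

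\textbf{Your route.} You realize $D_i$ and $D_{i+1}$ as probabilities of nested events in a single labelled coalescent on $\{1,\dots,i+2\}$, using sampling consistency and exchangeability. This argument is elementary and self-contained. It also yields an explicit identity for the second difference:
\[
g_{i,1}(T)-2g_{i+1,1}(T)+g_{i+2,1}(T)=\P\bigl(\text{labels }1,\dots,i\text{ share a block containing neither }i+1\text{ nor }i+2\bigr).
\]
It works verbatim for any exchangeable, consistent coalescent, for example $\Lambda$-coalescents.

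\textbf{Why the extra structure matters.} Your argument uses exactly the Kingman-specific structure the conclusion needs, and total positivity alone cannot supply it. Every pure-death chain $X$ has a totally positive transition matrix, yet convexity can fail. Take death rates $\lambda_2\gg 1\gg\lambda_3$ and $T=1$. Then $\P_1(X_T=1)=1$, $\P_2(X_T=1)\approx 1$ and $\P_3(X_T=1)\approx 0$, which is not convex.

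So the paper's argument can only work through extra hypotheses of the cited preservation theorem. A typical such hypothesis is that the kernel maps affine functions to affine functions. That fails here, because $\E Z_i^T$ is bounded but nonconstant in $i$, and the paper does not verify any such hypothesis.

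\textbf{What each buys.} When its hypotheses hold, the total-positivity route is shorter and handles all convex test functions at once, not just $e_1$. Your route needs no such hypotheses and gives a probabilistic meaning to the second difference.
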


    \begin{proof}[Proof of Lemma \ref{lem:g_i1_is_convex}]
      It is a classical result of Karlin that the transition matrix $P_T := [g_{i,j}(T)]_{i,j\in \N}$ is totally positive (of any order) for any fixed $T$. See for example \cite{karlin1959coincidence}. Theorem 2 of the paper \cite{eppen1966convexity}, again a result of Karlin, then shows such matrices preserve convexity. Since $e_1 = (1,0,0,...)$ is convex and $(g_{i,1}(T)) = P_T e_1$ this implies the sequence $\{g_{i,1}(T)\}_{i \in \N}$ is convex. 
    \end{proof}

    \noindent Next, we study the expected number of lineages that survive under Kingman's coalescent when the number of starting lineages is large. Since the initial coalescent rate ${i\choose 2}$ grows so fast in the number of lineages, eventually adding more lineages has barely any influence on the number remaining at time $T$. This is essentially the well-known observation that Kingman's coalescent can come down from infinity in any positive time. 

    \begin{lemma}[Upper Bound for Expected Number of Lineages]
        For any $i \in N$ and $T > 0$
            \[\E Z_i^T \leq \frac{1}{1-(1-1/i)\exp(-\frac{T}{2})} \leq\frac{1}{1-\exp\left(-\frac{T}{2}\right)}.\]
        In particular, for any fixed $T > 0$ we see $\E Z_i^T = \Theta_T(1)$.
        \label{lem:expected_number_lineages}
    \end{lemma}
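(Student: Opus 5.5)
Let $m_i(t) := \E Z_i^t$, the expected number of lineages at time $t$ when starting from $i$ lineages. The plan is to derive a differential inequality for $m_i(t)$ and compare it with an explicit Riccati-type solution. The process $Z_i^t$ is a pure-death chain with death rate $\binom{Z}{2}$. Dynkin's formula, or equivalently the forward equations used in the proof of Theorem~\ref{thm:kingman_preserves_ultralogconcave}, gives
\[
\frac{d}{dt} m_i(t) = -\E\binom{Z_i^t}{2} = -\tfrac12\bigl(\E (Z_i^t)^2 - m_i(t)\bigr).
\]
Since the state space is finite, differentiation under the expectation is justified. By Jensen, $\E (Z_i^t)^2 \geq m_i(t)^2$, so
\[
\dot m \leq -\tfrac12 m(m-1), \qquad m(0)=i .
\]

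Next I solve the comparison ODE $\dot y = -\tfrac12 y(y-1)$ with $y(0)=i$. This is logistic. Setting $w = 1 - 1/y$ gives $\dot w = \dot y / y^2 = -\tfrac12 (1-1/y) = -\tfrac12 w$. Hence $w(t) = (1-1/i)e^{-t/2}$, and so
\[
y(t) = \frac{1}{1-(1-1/i)e^{-t/2}} .
\]
A standard comparison argument then gives $m_i(t) \leq y(t)$. To make it clean, I apply the same substitution to $m$ itself. Since $m_i(t) \geq 1$, the quantity $\tilde w := 1 - 1/m$ satisfies $\dot{\tilde w} = \dot m/m^2 \leq -\tfrac12 \tilde w$. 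Grönwall then yields $\tilde w(t) \leq (1-1/i)e^{-t/2}$, and rearranging gives the first inequality at $t=T$.

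The second inequality is immediate. The map $i \mapsto 1/(1-(1-1/i)e^{-T/2})$ is increasing in $i$ and is bounded by its limit $1/(1-e^{-T/2})$. For the $\Theta_T(1)$ statement, the lower bound $\E Z_i^T \geq 1$ holds trivially, and the upper bound just established is independent of $i$.

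The step that needs care is the direction of the inequality. Jensen gives $\E\binom{Z}{2} \geq \binom{\E Z}{2}$ because $x \mapsto x(x-1)/2$ is convex. Therefore the rate of decrease of $m$ is at least that of the comparison ODE, and $m$ stays below $y$. This is exactly the direction we need, so the main obstacle is only verifying the differential identity for $m_i$, which is routine.
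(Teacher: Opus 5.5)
Your proof is correct and follows essentially the same route as the paper: the identity $\frac{d}{dt}\E Z_i^t = -\E\binom{Z_i^t}{2}$, Jensen's inequality for the convex map $x \mapsto x(x-1)/2$, and comparison with the logistic ODE $\dot y = -\tfrac12 y(y-1)$, $y(0)=i$, whose explicit solution gives the bound. Your substitution $\tilde w = 1 - 1/m$ followed by Gr\"onwall is a clean way to make rigorous the paper's terse appeal to the comparison principle; the paper writes ``$\dot z \leq \dot y$'', which really means $\dot z \leq f(z)$ versus $\dot y = f(y)$.
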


    \noindent Note that in the small $T$ regime, where $\exp(-T/2) \approx 1-T/2$, this bound is roughly $2/T$, recovering the well-known asymptotics of Kingman's coalescent as it comes down from infinity. Hence in a sense this bound is roughly tight in that regime. 

    \begin{proof}[Proof of Lemma \ref{lem:expected_number_lineages}]
    Kolmogorov's backwards equations followed by Jensen's inequality implies
        \[\frac{d}{dt} \E Z_i^t \bigg \vert_{t=T} = \E \left(-{ Z_i^T \choose 2} \right) \leq -{ \E Z_i^T \choose 2}.  \]
    \noindent Hence letting $z(t) := \E Z^t_i$ we see $z(t)$ satisfies
        \[\dot{z}(t) \leq -\frac{z(z-1)}{2}, \qquad z(0) = i.\]
    \noindent Let $\dot{y} = - y(y-1)/2$ with $y(0) = i$. Applying separation of variables
    \[y(t) = \frac{1}{1-\left(1-1/i\right) \exp(-t/2)}.\]
    \noindent Since $\dot{z} \leq \dot{y}$ and $z(0) = y(0)$, applying the comparison principle yields
    \[z(t) \leq y(t) \leq \frac{1}{1- \exp(-t/2)},\]
    \noindent as we desired.
    \end{proof}

    \begin{corollary}
        Suppose that $k \in (2^{m-1}, 2^m]$. Then
        \[\E X^T_{\cB_k} \leq \frac{2-\exp(-T/2)}{1-\exp(-T/2) + \exp(-mT/2)/2^m} \leq \frac{2-\exp(-T/2)}{1-\exp(-T/2)} \]
        \label{corr:upperbound_lineages_balanced}
    \end{corollary}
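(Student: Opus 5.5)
The plan is to set up a scalar recursion for the expected number of lineages reaching the root of a balanced tree, close it with Jensen's inequality using Lemma~\ref{lem:expected_number_lineages}, and then solve the recursion explicitly. Write $c := \exp(-T/2)$. Define
\[
y(i) := \frac{1}{1-(1-1/i)c} = \frac{i}{i(1-c)+c},
\]
which is the bound on $\E Z_i^T$ from Lemma~\ref{lem:expected_number_lineages}. The first step is to observe that $y$, extended to real $x \geq 1$, is increasing and concave. This is immediate from $y'(x) = c/(x(1-c)+c)^2$, which is positive and decreasing. Consequently, for any positive integer-valued random variable $A$ independent of the coalescent,
\[
\E Z_A^T = \E\bigl[\E[Z_A^T \mid A]\bigr] \leq \E\, y(A) \leq y(\E A)
\]
by Jensen's inequality.

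Next I apply this to the balanced-tree decomposition $X_k \overset{d}{=} Z_{X_{\lfloor k/2\rfloor}} + Z'_{X_{\lceil k/2\rceil}}$. This gives
\[
\E X_k \leq y(\E X_{\lfloor k/2\rfloor}) + y(\E X_{\lceil k/2\rceil}) \leq 2\,y(\E X_{\lceil k/2\rceil}).
\]
The last inequality uses two facts: $X_\ell$ is stochastically increasing in $\ell$, which was shown in the proof of Theorem~\ref{thm:improved_bipartition_cover_bound_balanced}, so $\E X_\ell$ is nondecreasing; and $y$ is increasing.

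Now define $x_0 := 1$ and $x_m := 2y(x_{m-1})$. By induction on $m$, combined with monotonicity of $\E X_\ell$ in $\ell$, we get $\E X_k \leq \E X_{2^m} \leq x_m$ for every $k \leq 2^m$, and in particular for $k \in (2^{m-1},2^m]$. The base case is $X_1 \equiv 1$. For the inductive step, note that $\lceil k/2\rceil \leq 2^{m-1}$ whenever $k \leq 2^m$.

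The remaining step is to solve the recursion, which linearizes under reciprocals. Setting $u_m := 1/x_m$, the relation $x_m = 2y(x_{m-1})$ becomes
\[
u_m = \tfrac{1}{2}\bigl(1-c + c\,u_{m-1}\bigr), \qquad u_0 = 1.
\]
This is an affine recursion with fixed point $u^* = (1-c)/(2-c)$. Its solution is
\[
u_m = u^* + (c/2)^m (1-u^*) = \frac{1-c + c^m/2^m}{2-c}.
\]
Inverting gives
\[
x_m = \frac{2-c}{1-c+e^{-mT/2}/2^m},
\]
which is exactly the first claimed bound. The second bound follows by dropping the positive term $e^{-mT/2}/2^m$ from the denominator.

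I expect the main obstacle to be the justification of the Jensen step. Lemma~\ref{lem:expected_number_lineages} only gives a pointwise bound $\E Z_i^T \leq y(i)$ at integers, so the argument relies on the concavity of $y$ as a function of a real argument, together with the independence of the mixing variable $A$ from the coalescent run along the final branch. Both hold here, but they should be stated carefully.
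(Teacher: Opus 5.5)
Your proposal is correct and follows essentially the same route as the paper: concavity of the bound from Lemma~\ref{lem:expected_number_lineages} plus Jensen gives $a_{m+1}\le 2u(a_m)$, which becomes an affine recursion in reciprocals and yields exactly the stated closed form. The only cosmetic difference is the reduction step. The paper first reduces to $k=2^m$ by a coupling and iterates on $a_m=\E X_{\cB_{2^m}}$, whereas you carry the induction through $\lceil k/2\rceil$ using the stochastic monotonicity of $X_\ell$; both are valid.
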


    \begin{proof}[Proof of Corollary \ref{corr:upperbound_lineages_balanced}]
        Fix $T$ and let
        \[u(n) := \frac{1}{1-(1-1/n)\alpha}, \qquad \alpha := \exp(-T/2)\]
        \noindent be from our upper bound in Lemma \ref{lem:expected_number_lineages}. Then $u$ is increasing and concave in $n$. Let $Y_m := X_{\cB_{2^m}}$ and $a_m := \E Y_m$. Clearly by a coupling argument $\E X_{\cB_k} \leq a_m$. Moreover, by concavity
        \[a_{m+1} = \E (Z_{Y_{m}} + Z_{Y_{m}}) \leq 2 \E u(Y_{m}) \leq 2u(a_m)\]
        \noindent In particular, this implies
        \[\frac{1}{a_{m+1}} \geq \frac{1-\alpha}{2} + \frac{\alpha}{2} \cdot \frac{1}{a_m}.\]
        \noindent Iterating this, we see
        \[a_m \leq \frac{2-\alpha}{1-\alpha + (\alpha/2)^m} \leq \frac{2-\alpha}{1-\alpha} \]

    \end{proof}

\subsection{Basic Properties of Stochastic Orderings}
\label{app:basic-stochastic-orderings}

\noindent The following results are just simple general properties of stochastic orderings which we have sometimes reinterpreted in our current setting. The book \cite{shaked2007stochastic_orderings} contains more information on properties of such stochastic orderings, far beyond what we cover here. 

\subsubsection{First-Order Stochastic Dominance}
\label{app:first-order-stochastic-dominance}

In this section, we prove some lemmas relating to first-order stochastic dominance and the random variables $Z^T_i$. Below we typically suppress the $T$ notation unless it is explicitly necessary. The following generalizes Lemma \ref{lem:cdf_of_Zi} by allowing for the number of initial lineages to be random. 

\begin{corollary}[Stochastic Dominance of Compositions]
    If $X \geq_{st} Y$ then
    
    \[Z_X \geq_{st} Z_{Y}.\]
    
    \noindent Moreover, if $X' \geq_{st} Y'$ and $X \indep Y, X' \indep Y'$, or more generally if $(X,Y) \geq_{st} (X',Y')$ coordinate-wise then
        \[Z_X + Z_{Y} \geq_{st} Z_{X'}+ Z_{Y'}.\]
\label{corollary:stochastic_dominance_composition}
\end{corollary}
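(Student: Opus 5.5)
The plan is to prove both parts by coupling, using Lemma~\ref{lem:cdf_of_Zi} as the basic input. Since $\{Z_\ell\}$ is defined as an independent mixture, it is convenient to realize $Z_X$ as $F_X^{-1}(U)$ for a single uniform $U$ independent of $X$, where $F_\ell$ is the cdf of $Z_\ell^T$. Lemma~\ref{lem:cdf_of_Zi} says $\ell\mapsto Z_\ell$ is stochastically increasing, i.e.\ $F_\ell(t)$ is nonincreasing in $\ell$ for every $t$. Consequently the quantile map $\ell\mapsto F_\ell^{-1}(u)$ is nondecreasing for each fixed $u$.

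For the first claim, I will avoid couplings and compute directly. Fix $t$ and set $\phi_t(\ell):=\P(Z_\ell>t)$. By Lemma~\ref{lem:cdf_of_Zi}, $\phi_t$ is nondecreasing in $\ell$. Conditioning on the number of starting lineages gives $\P(Z_X>t)=\E\,\phi_t(X)$ and likewise $\P(Z_Y>t)=\E\,\phi_t(Y)$. The characterization of $\geq_{st}$ by expectations of nondecreasing functions then yields $\E\phi_t(X)\geq\E\phi_t(Y)$, hence $Z_X\geq_{st}Z_Y$.

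For the second claim I will use the coordinatewise (multivariate usual) stochastic order: $(X,Y)\geq_{st}(X',Y')$ means $\E\psi(X,Y)\geq\E\psi(X',Y')$ for every $\psi$ that is nondecreasing in each coordinate. Fix $t$ and define
\[\Psi_t(a,b):=\P(Z_a+Z'_b>t),\]
where $Z_a$ and $Z'_b$ are independent. The key step is that $\Psi_t$ is nondecreasing in each coordinate. To see this, condition on $Z'_b=c$: the conditional probability is $\P(Z_a>t-c)$, which is nondecreasing in $a$ by Lemma~\ref{lem:cdf_of_Zi}. Averaging over $c$ preserves this, and the same argument applies with the roles of $a$ and $b$ swapped. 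Because the two families are independent of each other and of $(X,Y)$, we have $\P(Z_X+Z'_Y>t)=\E\,\Psi_t(X,Y)$, and the dominance hypothesis gives the inequality.

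The independent case reduces to the general one via the standard fact that independent coordinatewise dominance implies joint dominance. To prove that fact I would either couple the two coordinates separately through quantile couplings and take the product coupling, or apply Fubini to $\E\psi(X,Y)$, integrating one coordinate at a time. The only mild obstacle is being careful that the mixtures are genuinely independent of $(X,Y)$ and of each other, so that the conditioning identities hold; the paper's convention stated before Lemma~\ref{lem:deterministic_balancing_lemma} guarantees this.
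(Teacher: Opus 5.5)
Your proof is correct and is essentially the paper's. Part one is the same conditioning on $X$, with $\ell\mapsto\P(Z_\ell>t)$ nondecreasing by Lemma~\ref{lem:cdf_of_Zi}. For part two, you make explicit the coordinatewise monotonicity of $\Psi_t(a,b)$ that the paper only invokes for the joint case, and you get the independent case (correctly read as $X\geq_{st}X'$, $Y\geq_{st}Y'$) as a special case; the paper instead conditions on $Z_X$ and applies part one twice. The quantile-coupling setup in your first paragraph is never used and can be dropped.
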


\noindent In our context, the first statement above just says that if we tend to have more lineages entering a given edge of our tree, then we tend to also have more lineages exiting it. The second statement says that if an edge $e$ tends to have more lineages uncoalesced in both its subtrees, then the edge $e$ tends to have more lineages entering it. 

\begin{proof}[Proof of Corollary \ref{corollary:stochastic_dominance_composition}]
    For the first statement, note that
    \[\P(Z_X \geq t) = \E_{i \sim X} \P(Z_i \geq t) \geq \E_{i \sim Y} \P(Z_i \geq t) = \P(Z_Y \geq t),\]
    \noindent as by Lemma \ref{lem:cdf_of_Zi} the function $i \mapsto \P(Z_i \geq t)$ is increasing. Hence $Z_X \geq Z_Y$ stochastically. The second claim follows immediately by conditioning on $Z_X$
    \begin{align*}
        \mathbb{P}(Z_X + Z_Y \geq t) &= \mathbb{E}_{z \sim Z_X} \mathbb{P}(Z_Y \geq t-z) \\
        &\geq \mathbb{E}_{z \sim Z_X} \mathbb{P}(Z_{Y'} \geq t-z) \\
        &\geq \mathbb{E}_{z \sim Z_{X'}} \mathbb{P}(Z_{Y'} \geq t-z) \\
        &= \mathbb{P}(Z_{X'} + Z_{Y'} \geq t)
    \end{align*}
    \noindent so that $Z_X + Z_Y \geq_{st} Z_{X'} + Z_{Y'}$. The joint statement follows similarly just from the fact $\phi(a,b) := \P(Z_a + Z_b \geq t)$ is trivially increasing coordinate-wise. 
\end{proof}

\noindent The following tells us that increasing mixtures of increasing variables are again increasing. 
\begin{lemma}
    Suppose that we have collections of random variables $\{V_i\}_{i \in \N}, \{\Theta_i\}_{i \in \N}$ so that for each $i \leq j$ we have:
    \begin{enumerate}[label=\alph*)]
        \item $\Theta_i \leq_{st} \Theta_j$,
        \item $V_i \mid \{\Theta_i = \theta\} \leq_{st} V_j \mid \{\Theta_j = \theta\}$ for every fixed $\theta$,
         \item $V_i \mid \{\Theta_i = \theta\} \leq_{st} V_i \mid \{\Theta_i = \theta'\}$ for every $\theta \leq \theta'$ and $i$.
    \end{enumerate}
    \noindent Then $V_i \leq_{st} V_j$ for $i \leq j$. 
\label{lem:mixtures_and_stochastic_dominance}
\end{lemma}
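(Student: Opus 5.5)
To prove the lemma, I would show $\P(V_i > t) \le \P(V_j > t)$ for every fixed $t$ and $i \le j$. The plan is to condition on the mixing variables and then change the mixing law, using (b) for the first step and (a) together with (c) for the second.

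Fix $t$ and define, for each index $m$ and each value $\theta$ in the support of $\Theta_m$,
\[
\psi_m(\theta) := \P\bigl(V_m > t \given \Theta_m = \theta\bigr).
\]
By the law of total probability, $\P(V_i > t) = \E\,\psi_i(\Theta_i)$ and $\P(V_j > t) = \E\,\psi_j(\Theta_j)$. The argument is then the chain
\[
\E\,\psi_i(\Theta_i) \;\le\; \E\,\psi_j(\Theta_i) \;\le\; \E\,\psi_j(\Theta_j).
\]

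The first inequality holds pointwise in $\theta$ by hypothesis (b), since (b) says exactly $\psi_i(\theta) \le \psi_j(\theta)$. The second inequality needs two facts. First, $\psi_j$ is nondecreasing in $\theta$, which is hypothesis (c) applied with index $j$. Second, $\Theta_i \le_{st} \Theta_j$, which is hypothesis (a). A nondecreasing bounded function has larger expectation under the stochastically larger law, so the second inequality follows. Since $t$ was arbitrary, $V_i \le_{st} V_j$.

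The main obstacle is a technical one: making the conditional laws well defined. In the first inequality, $\psi_j$ is evaluated at values $\theta$ in the support of $\Theta_i$. These values need not lie in the support of $\Theta_j$, so $\psi_j(\theta)$ may not be defined there. In the intended application, $\Theta$ is integer valued (a lineage count), and the conditional law of $V_j$ given $\Theta_j = \theta$ is specified by a kernel, for example $Z_\theta$, for every $\theta$. I would therefore read (b) and (c) as statements about kernels $\theta \mapsto \mathcal{L}(V_m \given \Theta_m = \theta)$ that are defined for all $\theta$. Under that reading each step above is immediate.

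As an alternative that avoids conditioning, one can use a coupling. Realize $\Theta_i \le \Theta_j$ almost surely via quantile coupling. Then, given these values, couple the conditional draws monotonically, again by quantile coupling, using (b) and (c) through the intermediate law $\mathcal{L}(V_j \given \Theta_j = \Theta_i)$. I would present the CDF chain above as the main proof and mention the coupling only as a remark.
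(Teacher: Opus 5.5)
Your proof is correct and follows the paper's argument: condition on $\Theta_i$, replace the conditional law of $V_i$ by that of $V_j$ using (b), then change the mixing law from $\Theta_i$ to $\Theta_j$ using that $\theta \mapsto \P(V_j > t \mid \Theta_j = \theta)$ is nondecreasing by (c) together with (a). Your remark that (b) and (c) should be read as statements about kernels defined for every $\theta$, not only on the support of $\Theta_j$, is a worthwhile clarification that the paper leaves implicit.
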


\begin{proof}[Proof of Lemma \ref{lem:mixtures_and_stochastic_dominance}]
    For $i < j$ note by conditioning on $\Theta_i$ we get
    \begin{align*}
        \P(V_i \geq x) & = \int_\R \P(V_i \geq x \; | \; \Theta_i = \theta) \; \Theta_i(d\theta) \\
        & \leq \int_\R \P(V_j \geq x \; | \; \Theta_j = \theta) \; \Theta_i(d\theta)\\
        & \leq \int_\R \P(V_j \geq x \; | \; \Theta_j = \theta) \; \Theta_j(d\theta) \\
        & = \P(V_j \geq x)
    \end{align*}    
    \noindent by first using property (b) and then properties (a),(c) together. For the latter, we are using the fact that (c) implies $\P(V_j \geq x | \Theta_j = \theta)$ is an increasing function in $\theta$, and since $\Theta_i \leq_{st} \Theta_j$ this ordering is preserved when you take the expectation with respect to any increasing function.  
    \end{proof}

    \subsubsection{Likelihood Ratio Ordering}
    \label{app:likelihood-ratio-ordering}
    We say a random variable $X$ is less than $Y$ with respect to the \textit{likelihood ratio ordering (LRO)}, and denote this by $X \leq_{lr} Y$,  if the likelihood ratios $f_Y(t)/f_X(t)$ are non-decreasing in $t$. Here, $f_X, f_Y$ are the mass functions of $X,Y$ respectively. Equivalently, we require
    \[f_X(s)f_Y(t) \geq f_X(t)f_Y(s), \quad \forall s \leq t\]
    \noindent It is not hard to see that $X \leq_{lr} Y$ implies $X \leq_{st} Y$, so this is a strictly stronger ordering than that of first-order stochastic dominance. The LRO has some convenient closure properties. The following appear as Theorems 1.C.9 and 1.C.17 of \cite{shaked2007stochastic_orderings} respectively:

    \begin{theorem}[LRO preserved by Convolutions]
        Suppose we have two collections $\{X_i\}_{i=1}^n, \{Y_i\}_{i=1}^n$ of independent, log-concave random variables. If $X_i \leq_{lr} Y_i$ for all $i$ then $\sum_{i=1}^n X_i \leq_{lr} \sum_{i=1}^n Y_i$.
        \label{thm:convolutions_and_likelihood_ratio_order}
    \end{theorem}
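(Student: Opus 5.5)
The plan is to reduce the statement to a single-step claim and prove that claim by total positivity. The single-step claim is: if $X \leq_{lr} Y$ and $Z$ is an independent log-concave integer-valued random variable, then $X+Z \leq_{lr} Y+Z$. Only the added summand needs to be log-concave here.

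Granting this claim, I would argue by induction on $n$, changing one summand at a time. Write $S_j^X := \sum_{i\leq j} X_i$ and similarly $S_j^Y$. Suppose $S_{n-1}^X \leq_{lr} S_{n-1}^Y$. Then
\[S_{n-1}^X + X_n \leq_{lr} S_{n-1}^Y + X_n \leq_{lr} S_{n-1}^Y + Y_n.\]
The first step applies the claim with $Z = X_n$, which is log-concave by hypothesis. The second step applies the claim with $Z = S_{n-1}^Y$, which is log-concave by Pr\'ekopa's Theorem~\ref{thm:prekopas_theorem}. Transitivity of $\leq_{lr}$ then gives the result. Transitivity holds because ratios of nondecreasing positive ratios multiply; on a common support this is immediate.

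For the single-step claim, write
\[f_{X+Z}(t)=\sum_s f_X(s)\,K(s,t), \qquad f_{Y+Z}(t)=\sum_s f_Y(s)\,K(s,t), \qquad K(s,t):=f_Z(t-s).\]
Log-concavity of $Z$, in the sense of a log-concave mass function with no internal zeros, is exactly the statement that $K$ is TP$_2$. Equivalently, for $s\le s'$ and $t\le t'$,
\[K(s,t)K(s',t')\ge K(s,t')K(s',t).\]
Meanwhile $X\le_{lr}Y$ says that the two-row kernel $L(\cdot,s)$, with rows $f_X(s)$ and $f_Y(s)$, is TP$_2$. We must show that the two-row kernel with rows $f_{X+Z}(t)$ and $f_{Y+Z}(t)$ is TP$_2$ in $t$, which is the basic composition formula. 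Concretely, for $t\le t'$ I would expand
\[f_{X+Z}(t)f_{Y+Z}(t')-f_{X+Z}(t')f_{Y+Z}(t)\]
as a double sum over $(s,s')$ and symmetrize over $s<s'$, in the manner of Cauchy--Binet. Each term then becomes a product of the $2\times2$ minor
\[f_X(s)f_Y(s')-f_X(s')f_Y(s)\ge 0\]
and the minor
\[K(s,t)K(s',t')-K(s,t')K(s',t)\ge 0.\]
The diagonal terms $s=s'$ cancel, so the whole expression is nonnegative.

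I expect the main obstacle to be bookkeeping rather than ideas. The symmetrization identity must be written out carefully so that the sign pairing is correct. The support issues also need care: log-concavity must include the ``no internal zeros'' convention, or else $K$ fails to be TP$_2$. In addition, the likelihood-ratio inequality should be used in its cross-multiplied form $f_X(s)f_Y(t)\ge f_X(t)f_Y(s)$, so that points where a mass function vanishes cause no division problems. If desired, one can instead cite Karlin's composition theorem for TP$_2$ kernels directly, as in \cite{shaked2007stochastic_orderings}.
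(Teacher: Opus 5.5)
Your argument is correct. The paper gives no proof of this statement: it imports it as Theorem 1.C.9 of \cite{shaked2007stochastic_orderings}. What you have written is essentially the standard proof behind that citation, and it goes in two steps.
\begin{itemize}
\item First, the one-step claim $X+Z\le_{lr}Y+Z$ via the basic composition formula. Your symmetrization over $s<s'$ is right, because $D(s,s'):=K(s,t)K(s',t')-K(s,t')K(s',t)$ is antisymmetric in $(s,s')$.
\item Second, an induction that swaps one summand at a time, with Pr\'ekopa supplying log-concavity of $S^Y_{n-1}$.
\end{itemize}

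What your version buys over the citation is self-containedness and visibility of the hypotheses. Your insistence on the no-internal-zeros convention is essential, not cosmetic. The paper defines log-concavity only by $x_k^2\ge x_{k-1}x_{k+1}$, and under that literal definition the statement is false. Take $n=2$, $X_1\equiv 0$, $Y_1\equiv 1$, and $X_2=Y_2$ with mass $\tfrac12$ at $0$ and at $3$. All four are log-concave in the weak sense and satisfy $X_i\le_{lr}Y_i$, yet
\[
f_{X_1+X_2}(1)\,f_{Y_1+Y_2}(3)=0<\tfrac14=f_{X_1+X_2}(3)\,f_{Y_1+Y_2}(1).
\]
This does not damage the paper's application, since every variable it uses has interval support.

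One point to tighten: the three sums in your chain need not share a support, so ``on a common support this is immediate'' does not quite establish transitivity. It does hold in cross-multiplied form for mass functions. Suppose $f\le_{lr}g\le_{lr}h$ and $f(t)h(s)>0$ with $s<t$.
\begin{itemize}
\item If $g(s)=0<g(t)$, this contradicts $g\le_{lr}h$ at the pair $(s,t)$.
\item If $g(t)=0$, then $f\le_{lr}g$ forces $g$ to vanish on $(-\infty,t]$, hence at $s$. Then $g\le_{lr}h$ forces $g$ to vanish on $[s,\infty)$, so $g\equiv 0$, which is impossible.
\end{itemize}
Hence $g(s),g(t)>0$, and your ratio-multiplication argument applies.
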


\begin{theorem}[LRO preserved by Mixtures]
Suppose we have a collection $\{V_i\}_{i\in \N}$ of random variables so that $V_i \leq_{lr} V_j$ for $i \leq j$. Moreover, suppose that we have random variables $M,N$ independent of $\{V_i\}$ with $M \leq_{lr} N$. Then $V_M \leq_{lr} V_N$.
\label{thm:mixtures_and_likelihood_ratio_order}
\end{theorem}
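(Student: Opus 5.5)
Since $M,N$ are independent of $\{V_i\}$, the mass function of the mixture is explicit:
\[
f_{V_M}(t)=\sum_{m}a_m\,K(m,t),\qquad f_{V_N}(t)=\sum_{m}b_m\,K(m,t),
\]
where $a_m:=\P(M=m)$, $b_m:=\P(N=m)$ and $K(m,t):=f_{V_m}(t)$. I will work throughout with the cross-product form of the likelihood ratio ordering, which avoids any issue with ratios where a mass is zero. So the goal is to show
\[
D:=f_{V_M}(s)f_{V_N}(t)-f_{V_M}(t)f_{V_N}(s)\geq 0\quad\text{for all } s\leq t.
\]

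The hypotheses translate into two sign conditions. First, $V_m\leq_{lr}V_{m'}$ for $m\leq m'$ means the kernel $K$ is $\mathrm{TP}_2$:
\[
K(m,s)K(m',t)-K(m,t)K(m',s)\geq 0\quad\text{for } m\leq m',\ s\leq t.
\]
Second, $M\leq_{lr}N$ means
\[
a_m b_{m'}-a_{m'}b_m\geq 0\quad\text{for } m\leq m'.
\]
The conclusion is then an instance of the basic composition formula: a composition of $\mathrm{TP}_2$ kernels is $\mathrm{TP}_2$. I would verify this directly rather than cite it.

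Expanding the products gives
\[
D=\sum_{m,m'}a_m b_{m'}\bigl[K(m,s)K(m',t)-K(m,t)K(m',s)\bigr].
\]
The diagonal terms $m=m'$ vanish. Pairing the term indexed by $(m,m')$ with the one indexed by $(m',m)$, the bracket changes sign. So for $m<m'$ the pair combines to
\[
\bigl(a_m b_{m'}-a_{m'}b_m\bigr)\bigl[K(m,s)K(m',t)-K(m,t)K(m',s)\bigr].
\]
Both factors are nonnegative by the two sign conditions above, so $D\geq 0$, which is exactly $V_M\leq_{lr}V_N$.

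The only step needing care is justifying the rearrangement into symmetric pairs when the index set is infinite. All four sums $\sum_m a_m K(m,\cdot)$ and their analogues with $b_m$ are bounded by $1$, so every double series involved converges absolutely and can be regrouped freely. Beyond that, the argument is an exact algebraic identity; the main conceptual point is simply recognizing the total-positivity structure.
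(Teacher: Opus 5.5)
Your proof is correct. The double-series expansion of $D$, the vanishing of the diagonal, and the pairing of $(m,m')$ with $(m',m)$ into $\bigl(a_m b_{m'}-a_{m'}b_m\bigr)\bigl[K(m,s)K(m',t)-K(m,t)K(m',s)\bigr]$ are all right. Your justification for regrouping is also valid: each of the two double series has nonnegative terms summing to a product of masses, so both converge absolutely.

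The paper does not prove this statement. It simply quotes it as Theorem 1.C.17 of Shaked and Shanthikumar. The difference is therefore citing versus proving. The citation is shorter and places the result in the general framework of that textbook. Your direct check of the basic composition formula makes the appendix self-contained and shows exactly what is needed:
\begin{itemize}
\item only the two $\mathrm{TP}_2$ conditions;
\item independence, used solely to write $f_{V_M}=\sum_m a_m K(m,\cdot)$;
\item no log-concavity, unlike the companion convolution result, Theorem \ref{thm:convolutions_and_likelihood_ratio_order}.
\end{itemize}

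Your framing also clarifies how the paper uses the result in Lemma \ref{lem:xi_zxi_increasing_in_lro}. There the $\mathrm{TP}_2$ hypothesis on $K(m,t)=g_{m,t}(T)$ is exactly Karlin's total positivity of the coalescent transition matrix, so $Z_{X_i}\leq_{lr}Z_{X_j}$ is just a composition of $\mathrm{TP}_2$ kernels. Your argument also carries over unchanged to densities, replacing sums by integrals and using Tonelli.
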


\noindent Using these, we can see our random variables of interest are increasing with respect to the LRO:

\begin{lemma}
For $i \leq j$ we have $Z_i \leq_{lr} Z_j$, $X_i \leq_{lr} X_j$ and $Z_{X_i} \leq_{lr} Z_{X_j}$.
\label{lem:xi_zxi_increasing_in_lro}
\end{lemma}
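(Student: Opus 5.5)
The three claims will be proved in order, each feeding the next: first $Z_i \leq_{lr} Z_j$, then $X_i \leq_{lr} X_j$ and $Z_{X_i} \leq_{lr} Z_{X_j}$ together by strong induction, combining Theorems~\ref{thm:convolutions_and_likelihood_ratio_order} and~\ref{thm:mixtures_and_likelihood_ratio_order} with the log-concavity from Lemma~\ref{lem:xm_zxm_are_ultra_log_concave}.

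\textbf{Step 1: $Z_i \leq_{lr} Z_j$.} It suffices to treat $j=i+1$, since the likelihood ratio order is transitive. Write $Z_i = Z_{\delta_i}$, the coalescent started from the point mass at $i$. The required inequality is $g_{i,s}(T)g_{i+1,t}(T) \geq g_{i,t}(T)g_{i+1,s}(T)$ for $s\leq t$. In matrix form this says the $2\times 2$ minors of $P_T=[g_{i,j}(T)]$ with rows $i<i+1$ and columns $s<t$ are nonnegative, which is exactly total positivity of order two. That is the classical result of Karlin already invoked in the proof of Lemma~\ref{lem:g_i1_is_convex}, so this step is immediate. If I wanted a self-contained argument instead, I would note that the pure-death process is skip-free and monotone. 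Then I would argue via the first-passage decomposition $g_{i+1,t}(T)=\int_0^T f_{i+1\to i}(u)\,g_{i,t}(T-u)\,du$, together with the $\mathrm{TP}_2$ property in $T$. I expect this route to be messier, so I would rely on Karlin.

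\textbf{Step 2: induction for $X_m$ and $Z_{X_m}$.} Again it suffices to show $X_m \leq_{lr} X_{m+1}$ and $Z_{X_m}\leq_{lr} Z_{X_{m+1}}$ for all $m$, by strong induction on $m$. The base case is trivial: $X_1\equiv 1$, and $X_1\leq_{lr}$ anything supported on $\{1,2,\dots\}$, since then $f_{X_1}(s)=0$ for $s>1$ while $f_{X_1}(1)f_Y(t)\geq 0$. For the inductive step, recall
\[X_m \overset{d}{=} Z_{X_{\lfloor m/2\rfloor}} + Z'_{X_{\lceil m/2\rceil}},\]
with independent summands. Passing from $m$ to $m+1$ increases exactly one of $\lfloor m/2\rfloor,\lceil m/2\rceil$ by one and leaves the other fixed. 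By the inductive hypothesis, the changed summand increases in $\leq_{lr}$, and the unchanged summand trivially satisfies $\leq_{lr}$ with itself. By Lemma~\ref{lem:xm_zxm_are_ultra_log_concave}, every summand $Z_{X_i}$ is log-concave. Theorem~\ref{thm:convolutions_and_likelihood_ratio_order} then gives $X_m\leq_{lr}X_{m+1}$. Next, the family $\{Z_i\}$ is $\leq_{lr}$-increasing by Step 1, and $X_m\leq_{lr}X_{m+1}$ are independent of it. Theorem~\ref{thm:mixtures_and_likelihood_ratio_order} therefore yields $Z_{X_m}\leq_{lr}Z_{X_{m+1}}$, closing the induction.

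\textbf{Main obstacle.} The only nontrivial input is Step 1. I expect the care needed there to be in confirming that Karlin's total positivity indeed applies to the death chain with rates $\binom{i}{2}$ on $\N$, including at the boundary state $1$. It does, because birth–death (in particular pure-death) semigroups are $\mathrm{TP}$ for every $t$. One must also check that Theorem~\ref{thm:convolutions_and_likelihood_ratio_order} is stated for a collection where some $X_i=Y_i$; this is harmless, since a log-concave variable is $\leq_{lr}$ itself.
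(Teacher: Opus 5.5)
Your proposal is correct and follows essentially the same route as the paper. Both obtain $Z_i \leq_{lr} Z_j$ from Karlin's total positivity of the birth--death transition matrix. Both then induct on $m$ through the recursion $X_m = Z_{X_{\lfloor m/2\rfloor}} + Z_{X_{\lceil m/2\rceil}}$, alternating Theorem~\ref{thm:mixtures_and_likelihood_ratio_order} and Theorem~\ref{thm:convolutions_and_likelihood_ratio_order}, with the log-concavity of $Z_{X_m}$ supplied by Lemma~\ref{lem:xm_zxm_are_ultra_log_concave}. Your treatment of the base case and of the unchanged summand is only a slightly more explicit version of the paper's argument.
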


\begin{proof}
    First, we show $\{Z_i\}_{i \in \N}$ is increasing in the LRO, namely

    \[\P(Z_i = m) \cdot \P(Z_j = n) \geq \P(Z_i = n)\cdot \P(Z_j  = m), \quad \forall i \leq j, \; \forall m \leq n. \]

    \noindent But this is a consequence of the total positivity of the transition kernels for birth-death processes (see for example \cite{karlin1959coincidence}).
    
    We will prove the other statements by induction. Trivially $X_1 \equiv 1 \leq_{lr} 2 \equiv X_2$. Thus suppose for induction that $X_1 \leq_{lr} ... \leq_{lr} X_i$. Then Theorem \ref{thm:mixtures_and_likelihood_ratio_order} implies $Z_{X_1} \leq_{lr} ... \leq_{lr} Z_{X_i}$. Using this, Theorem \ref{thm:convolutions_and_likelihood_ratio_order} implies
    \[X_i = Z_{X_{\lfloor i/2 \rfloor}} + Z_{X_{\lceil i/2 \rceil}} \leq_{lr} Z_{X_{\lfloor (i+1)/2 \rfloor}} + Z_{X_{\lceil (i+1)/2 \rceil}} = X_{i+1}\]
    \noindent since $Z_{X_m}$ are log-concave by Lemma \ref{lem:xm_zxm_are_ultra_log_concave}. Hence the result follows from induction. 
    
\end{proof}

\newpage 
\section{Proofs of the Main Upper-Bound Results}
\label{app:main-upper-bound-proofs}

\subsection{Proof of Lemma \ref{lem:caterpillar_maximize_increasing_sums}: Caterpillars Maximize Increasing Sums}
\label{app:caterpillar-maximization-proof}

\noindent The following is true regardless of our convention for choosing the edge adjacent to the root. Moreover, it is clearly also true if we use \textit{all} descendant counts $\{\alpha_i\}_{i=1}^{2k-2}$ rather than just those corresponding to nontrivial bipartitions: all but one of the additional $\alpha_i$ are equal to one, and it is clear from the proof below we can handle the exceptional value as well. \\

\noindent \textbf{Lemma:} Suppose $\{\alpha_{i}\}_{i=1}^{k-3}$ are the descendant counts of our rooted binary tree $T$ corresponding to our nontrivial bipartitions. Moreover, suppose $f: \N \to \R$ is nondecreasing. Then $\sum_i f(\alpha_{i})$ is maximized when $T$ is the caterpillar tree, for which $\{\alpha_{i}\} = \{2,3,..., k-2\}$.

\begin{proof}[Proof Lemma \ref{lem:caterpillar_maximize_increasing_sums}]
Let's start by proving the caterpillar is the maximizer. Since $f$ is nondecreasing, it suffices to show we can reindex $\{\alpha_i\}_{i=1}^{k-3}$ so that $\alpha_i \leq i + 1$. Heuristically, we are just pairing up each edge $e$ in our tree $T$ with an edge $e'$ in our caterpillar tree so that $e$ has at most as many descendants as $e'$. We do so by induction on the number of leaves/species. \\

\noindent \textbf{Base Case: $k = 4$}\\
\noindent In this case, the two unique rooted trees, shown in Figure \ref{fig:caterpillar_and_balanced_tree}, both have $\alpha_1 = 2$ for their only nontrivial bipartition, so the statement is trivially true. Again we are ignoring trivial bipartitions and possibly one of the edges adjacent to the root.\\

\noindent \textbf{Inductive Step (Caterpillar)}\\
\noindent Suppose we have a tree $T$ with $k > 4$ leaves/species. There are two cases:

\begin{enumerate}[label=(\alph*)]
    \item \textit{One child of root is a leaf}. If $T'$ is the non-leaf subtree of the root, apply the inductive hypothesis to $T'$. Let $\{\alpha_i\}_{i=1}^{k-4}$ be the descendant counts of $T'$. By induction, we can reindex $\{\alpha_i\}_{i=1}^{k-4}$ so that $\alpha_i \leq i+1$. Moreover, as all nontrivial bipartitions have at most $k-2$ leaves in one part, trivially $\alpha_{k-3} \leq k-2$, so we are done. Namely, we can just pair the remaining nontrivial edge of T, with the top nontrivial edge of our caterpillar tree.

    \item \textit{Neither child of root is a leaf}. Let $T', T''$ be the two subtrees of the root. If these have $r,m$ leaves respectively, where $r,m \in \{2,3,..., k-2\}$ and $r+m=k$, let $\{\kappa_i\}_{i=1}^{r-3}$ and $\{\beta_j\}_{j=1}^{m-3}$ be their descendant counts. By induction, these can be ordered so that $\kappa_i \leq i+1$ and $\beta_j \leq j+1$. Clearly then defining

    \[\alpha_i = \begin{cases}
        \kappa_i, & 1 \leq i \leq r-3, \\
        \beta_{i-r+3}, & r-3 < i  \leq k-6. \\
    \end{cases}\]

    \noindent Namely, we just list the edges of $T'$ first, and then the edges of $T''$. This accounts for all but three of the edges of $T$: one of the edges $e$ adjacent to the root of $T$, one edge $e'$ in $T'$, and one edge $e''$ in $T''$.
    \indent Note that the descendant counts $\alpha_{e'}, \alpha_{e''}$ of $e', e''$ are at most $r-1, m-1$ respectively. Moreover
    \[r-1 = k-m-1 \leq k-3, \qquad m-1 = k-r-1 \leq k-3\]
    \noindent as both $T', T''$ have at least two leaves. Equality holds if $m=2$ or $r=2$ respectively, so for $k > 4$ equality cannot hold for both simultaneously. Hence without loss of generality assume $\alpha_{e'} \leq k-4$. So we can let $\alpha_{k-5}, \alpha_{k-4}$ correspond to edges $e', e''$ respectively. Trivially $e$ has descendant count at most $k-2$, as all nontrivial bipartitions do. Thus we can let $\alpha_{k-3}$ correspond to edge $e$. This completes the proof of Lemma \ref{lem:caterpillar_maximize_increasing_sums}.

\begin{figure}[h]
\centering
\begin{tikzpicture}[scale=0.7]

\node[circle, fill=black, inner sep=1.5pt] (root) at (0,0) {};
\node[circle, fill=black, inner sep=1.5pt] (n1) at (-2,-1.2) {};
\node[circle, fill=black, inner sep=1.5pt] (n2) at (2,-1.2) {};
\node[circle, fill=black, inner sep=1.5pt] (n3) at (-3,-2.4) {};
\node[circle, fill=black, inner sep=1.5pt] (n4) at (-1,-2.4) {};
\node[circle, fill=black, inner sep=1.5pt] (n5) at (1,-2.4) {};
\node[circle, fill=black, inner sep=1.5pt] (n6) at (3,-2.4) {};
\node[circle, fill=black, inner sep=1.5pt] (n7) at (-3.5,-3.6) {};
\node[circle, fill=black, inner sep=1.5pt] (n8) at (-2.5,-3.6) {};
\node[circle, fill=black, inner sep=1.5pt] (n9) at (-1.5,-3.6) {};
\node[circle, fill=black, inner sep=1.5pt] (n10) at (-0.5,-3.6) {};
\node[circle, fill=black, inner sep=1.5pt] (n11) at (0.5,-3.6) {};
\node[circle, fill=black, inner sep=1.5pt] (n12) at (1.5,-3.6) {};

\draw (root) -- (n1) node[midway, above left] {\tiny $\alpha_e = 4$};
\draw (root) -- (n2) ;
\draw (n1) -- (n3) node[midway, above left] {\tiny $\alpha_{e'} = 2$};
\draw (n1) -- (n4) node[midway, above right] {\tiny 2};
\draw (n2) -- (n5) node[midway, above left] {\tiny $\alpha_{e''} = 2$};
\draw (n2) -- (n6);
\draw (n3) -- (n7);
\draw (n3) -- (n8);
\draw (n4) -- (n9);
\draw (n4) -- (n10);
\draw (n5) -- (n11);
\draw (n5) -- (n12);
\end{tikzpicture}
\caption{Example of the inductive step in Lemma \ref{lem:caterpillar_maximize_increasing_sums}}
\label{fig:inductive_step_case_b}
\end{figure}
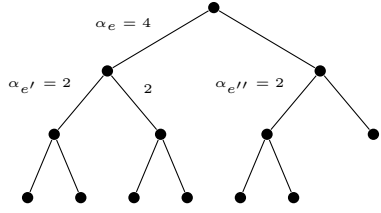
\end{enumerate}
\end{proof}

\noindent We can provide a partial complement to the above Lemma \ref{lem:caterpillar_maximize_increasing_sums} analyzing the other extreme. More concretely, we will show that the more balanced the tree is, the smaller these sums tend to be. We will not use this result in this paper, but it adds to the idea that the balanced tree and the caterpillar tree are in a sense the two extreme points in our space of trees.

\begin{lemma}
 Suppose $\{\alpha_{i}\}_{i=1}^{2k-2}$ are the descendant counts of our rooted binary tree $T$ corresponding to \textit{all} the edges of our tree (not just ones corresponding to nontrivial bipartitions). Moreover, suppose $f: \N \to \R$ is nondecreasing and convex. Then $\sum_i f(\alpha_{i})$ is minimized when $T$ is the balanced tree.
 \label{lem:balanced_trees_minimize_increasing_convex_sums}
\end{lemma}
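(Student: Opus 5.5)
We argue by strong induction on the number of leaves $k$, using the recursive structure of rooted binary trees. Write $F(T) := \sum_{e} f(\alpha_e)$, where the sum runs over all $2k-2$ edges of $T$ together with a pendant root edge of descendant count $k$ (adding the constant $f(k)$ changes no comparison). If the root of $T$ splits into subtrees $T', T''$ with $r$ and $k-r$ leaves, then
\[
F(T) = f(k) + F(T') + F(T''),
\]
since each edge of $T$ is either the root edge or an edge of $T'$ or $T''$, where the edges joining the root to $T'$ and $T''$ become their root edges. Define $b(k) := F(\cB_k)$. The balanced tree satisfies the recursion $b(k) = f(k) + b(\lfloor k/2\rfloor) + b(\lceil k/2\rceil)$, with $b(1) = f(1)$. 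By induction, $F(T) \geq f(k) + b(r) + b(k-r)$. It therefore suffices to show that $r \mapsto b(r) + b(k-r)$ on $\{1,\dots,k-1\}$ is minimized at $r = \lfloor k/2\rfloor$. This follows if $b$ is \emph{convex} on $\N$, i.e.\ if $\Delta b(n) := b(n+1) - b(n)$ is nondecreasing. Indeed, convexity gives $b(r) + b(k-r) \geq b(r+1) + b(k-r-1)$ whenever $r+1 \leq k-r-1$, so moving the split toward balance never increases the sum.

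The main step is therefore to prove that $b$ is convex. We do this by a second strong induction. Going from $n$ to $n+1$ increments exactly one of $\lfloor n/2\rfloor$ and $\lceil n/2\rceil$, so the recursion gives
\[
\Delta b(n) = \Delta f(n) + \Delta b\bigl(\lfloor n/2\rfloor\bigr).
\]
Here we use that for $n = 2m$ the increment moves $\lceil\cdot\rceil$ from $m$ to $m+1$, and for $n = 2m+1$ it moves $\lfloor\cdot\rfloor$ from $m$ to $m+1$; in both cases the increment is $\Delta b(m)$ with $m = \lfloor n/2\rfloor$. Now $\Delta f$ is nondecreasing because $f$ is convex, and $n \mapsto \lfloor n/2\rfloor$ is nondecreasing. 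By the inductive hypothesis $\Delta b$ is nondecreasing on smaller arguments, so $\Delta b(\lfloor n/2\rfloor)$ is nondecreasing in $n$. Hence $\Delta b(n)$ is nondecreasing, and $b$ is convex.

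The base cases require care with $b(1) = f(1)$ and $b(2) = f(2) + 2f(1)$, so that $\Delta b(1) = \Delta f(1) + f(1)$. This looks like it breaks the identity above, since $\lfloor 1/2\rfloor = 0$. Setting $b(0) := 0$ restores the recursion $\Delta b(0) = f(1)$, but then convexity at $0$ requires $f(1) \leq \Delta b(1)$, i.e.\ $\Delta f(1) \geq 0$. This is exactly where monotonicity of $f$ enters. I expect this boundary bookkeeping, together with checking that the identity for $\Delta b(n)$ holds uniformly in the parity of $n$, to be the only delicate point; the rest is the two inductions. Combining the two inductions gives $F(T) \geq f(k) + b(\lfloor k/2\rfloor) + b(\lceil k/2\rceil) = b(k) = F(\cB_k)$, which proves the lemma.
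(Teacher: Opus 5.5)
Your proof is correct in substance and takes a genuinely different route from the paper. There is one arithmetic slip in the base case, which you yourself flagged as the delicate point.

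\textbf{The slip and its fix.} From $b(1)=f(1)$ and $b(2)=f(2)+2f(1)$ you get $\Delta b(1)=f(2)+f(1)=\Delta f(1)+2f(1)$, not $\Delta f(1)+f(1)$. Also, $b(0):=0$ does not make the recursion hold at $k=1$: it would give $b(1)=2f(1)$. The consistent extension is $b(0):=-f(1)$. There are two easy repairs.
\begin{enumerate}
\item Normalize $f(1)=0$. This is harmless, because replacing $f$ by $f-f(1)$ shifts every $F(T)$ by the same constant $(2k-1)f(1)$ and preserves monotonicity and convexity. After that, your bookkeeping with $b(0)=0$ is exactly right.
\item Use the recurrence $\Delta b(n)=\Delta f(n)+\Delta b(\lfloor n/2\rfloor)$ only for $n\geq 2$, where it holds, and check the single base inequality directly: $\Delta b(2)-\Delta b(1)=f(3)-f(2)\geq 0$.
\end{enumerate}
The second repair shows that this inequality is the only place monotonicity is needed, and it cannot be dropped. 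For $k=4$, the caterpillar's sum exceeds the balanced tree's by exactly $f(3)-f(2)$.

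\textbf{Comparison with the paper.} The paper argues by local surgery. At an unbalanced vertex it moves one leaf from the end of the heavy path in the larger subtree to the end of the light path in the smaller subtree. It then pairs the edges of the two paths depth by depth: convexity handles the paired terms (since $\alpha_{e_i^p}>\alpha_{e_i^a}$), and monotonicity handles the surplus edges of the longer pruning path. Finally it iterates these moves from the root downward until the tree is balanced.

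You instead use additivity over the root split, $F(T)=f(k)+F(T')+F(T'')$, to reduce everything to convexity of $b(k)=F(\cB_k)$. What your route buys:
\begin{enumerate}
\item It is shorter and avoids the path-length and pairing bookkeeping, as well as the termination argument for the iterated moves.
\item It yields convexity of $k\mapsto\min_T F(T)$ as a by-product.
\item It follows the same root-split template the paper itself uses for Lemma~\ref{lem:balanced_is_worse_case}.
\end{enumerate}
What the paper's route buys is more structural information. Each individual balancing move weakly decreases the sum, so the sum is monotone along an explicit path of trees ending at $\cB_k$. This is in the spirit of Sackin-type balance indices, which correspond to the case $f(x)=x$.
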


\begin{proof}[Proof of Lemma \ref{lem:balanced_trees_minimize_increasing_convex_sums}]
\noindent We will describe a natural greedy balancing operation on the tree and prove it decreases $\sum f(\alpha_i)$. Recall that a \textit{cherry} in a binary tree refers to a subtree consisting of two leaves.

Say that a given vertex $v \in V(T)$ is \textit{unbalanced} if the number of leaves in its two subtrees differs by more than one. In order to make the tree more balanced, we will construct a new tree $T'$ by pruning one of the cherries of the larger subtree and attaching it to one of the leaves of the smaller subtree.

To locate which subtree to prune, start at $v$ and traverse the edge leading to the larger subtree. Iteratively descend the tree in this fashion until you reach a leaf $\ell$. At this point, necessarily $\ell$ and its sibling form a cherry. If this were not the case, then the sibling of $\ell$ forms a larger subtree, and we would have traversed into that subtree rather than towards $\ell_p$. This will be the cherry we prune. 

To locate the leaf at which to reattach, start at $v$ and traverse the edge leading to the smaller subtree. Iterate this procedure until you reach a leaf $\ell_a$. Attach the cherry at this leaf $\ell$ to form a new tree $T'$ with the same number of leaves overall as $T$. An illustration of this algorithm is provided in Figure \ref{fig:balancing_algorithm_trees}. 

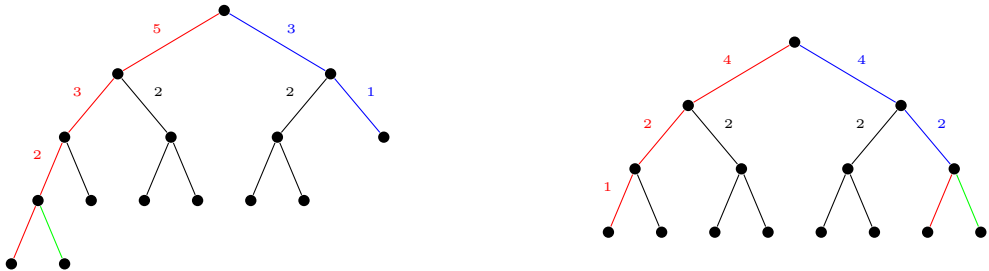
\begin{figure}[h]
\centering
\begin{minipage}{0.48\textwidth}
\centering
\begin{tikzpicture}[scale=0.7]

\node[circle, fill=black, inner sep=1.5pt] (root) at (0,0) {};
\node[circle, fill=black, inner sep=1.5pt] (n1) at (-2,-1.2) {};
\node[circle, fill=black, inner sep=1.5pt] (n2) at (2,-1.2) {};
\node[circle, fill=black, inner sep=1.5pt] (n3) at (-3,-2.4) {};
\node[circle, fill=black, inner sep=1.5pt] (n4) at (-1,-2.4) {};
\node[circle, fill=black, inner sep=1.5pt] (n5) at (1,-2.4) {};
\node[circle, fill=black, inner sep=1.5pt] (n6) at (3,-2.4) {};
\node[circle, fill=black, inner sep=1.5pt] (n7) at (-3.5,-3.6) {};
\node[circle, fill=black, inner sep=1.5pt] (n8) at (-2.5,-3.6) {};
\node[circle, fill=black, inner sep=1.5pt] (n9) at (-1.5,-3.6) {};
\node[circle, fill=black, inner sep=1.5pt] (n10) at (-0.5,-3.6) {};
\node[circle, fill=black, inner sep=1.5pt] (n11) at (0.5,-3.6) {};
\node[circle, fill=black, inner sep=1.5pt] (n12) at (1.5,-3.6) {};

\node[circle, fill=black, inner sep=1.5pt] (n13) at (-4,-4.8) {};
\node[circle, fill=black, inner sep=1.5pt] (n14) at (-3,-4.8) {};

\draw[red] (root) -- (n1) node[midway, above left] {\tiny $5$};
\draw[blue] (root) -- (n2) node[midway, above right] {\tiny $3$};
\draw[red] (n1) -- (n3) node[midway, above left] {\tiny $3$};
\draw (n1) -- (n4) node[midway, above right] {\tiny 2};
\draw (n2) -- (n5) node[midway, above left] {\tiny $2$};
\draw[blue] (n2) -- (n6) node[midway, above right] {\tiny 1};
\draw[red] (n3) -- (n7) node[midway, above left] {\tiny $2$};
\draw (n3) -- (n8);
\draw (n4) -- (n9);
\draw (n4) -- (n10);
\draw (n5) -- (n11);
\draw (n5) -- (n12);
\draw[red] (n7) -- (n13);
\draw[green] (n7) -- (n14);

\end{tikzpicture}
\end{minipage}
\hfill
\begin{minipage}{0.48\textwidth}
\centering
\begin{tikzpicture}[scale=0.7]

\node[circle, fill=black, inner sep=1.5pt] (root) at (0,0) {};
\node[circle, fill=black, inner sep=1.5pt] (n1) at (-2,-1.2) {};
\node[circle, fill=black, inner sep=1.5pt] (n2) at (2,-1.2) {};
\node[circle, fill=black, inner sep=1.5pt] (n3) at (-3,-2.4) {};
\node[circle, fill=black, inner sep=1.5pt] (n4) at (-1,-2.4) {};
\node[circle, fill=black, inner sep=1.5pt] (n5) at (1,-2.4) {};
\node[circle, fill=black, inner sep=1.5pt] (n6) at (3,-2.4) {};
\node[circle, fill=black, inner sep=1.5pt] (n7) at (-3.5,-3.6) {};
\node[circle, fill=black, inner sep=1.5pt] (n8) at (-2.5,-3.6) {};
\node[circle, fill=black, inner sep=1.5pt] (n9) at (-1.5,-3.6) {};
\node[circle, fill=black, inner sep=1.5pt] (n10) at (-0.5,-3.6) {};
\node[circle, fill=black, inner sep=1.5pt] (n11) at (0.5,-3.6) {};
\node[circle, fill=black, inner sep=1.5pt] (n12) at (1.5,-3.6) {};

\node[circle, fill=black, inner sep=1.5pt] (n13) at (3.5,-3.6) {};
\node[circle, fill=black, inner sep=1.5pt] (n14) at (2.5,-3.6) {};

\draw[red] (root) -- (n1) node[midway, above left] {\tiny $4$};
\draw[blue] (root) -- (n2) node[midway, above right] {\tiny $4$};
\draw[red] (n1) -- (n3) node[midway, above left] {\tiny $2$};
\draw (n1) -- (n4) node[midway, above right] {\tiny 2};
\draw (n2) -- (n5)node[midway, above left] {\tiny 2};
\draw[blue] (n2) -- (n6) node[midway, above right] {\tiny $2$};
\draw[red] (n3) -- (n7) node[midway, above left] {\tiny $1$};
\draw (n3) -- (n8);
\draw (n4) -- (n9);
\draw (n4) -- (n10);
\draw (n5) -- (n11);
\draw (n5) -- (n12);
\draw[green] (n6) -- (n13);
\draw[red] (n6) -- (n14);

\end{tikzpicture}
\end{minipage}

\caption{Side-by-side illustration of the initial tree $T$ (left) and the tree $T'$ after balancing (right). The pruning path $\{e_i^p\}$ in red and the attachment path $\{e_i^a\}$ in blue are highlighted}
\label{fig:balancing_algorithm_trees}
\end{figure}

Suppose that $\{\alpha_i\}_{i=1}^{2k-2}, \{\alpha_i'\}_{i=1}^{2k-2}$ are the descendant counts of $T, T'$ respectively. We claim that $\sum f(\alpha_i') \leq \sum f(\alpha_i)$, namely that this balancing operation can only decrease the overall sum. To see why, let
\[v = u_0 \xrightarrow{e^p_1} ... \xrightarrow{e^p_n} u_n = \ell_p, \quad v = w_0 \xrightarrow{e^a_1} ... \xrightarrow{e^a_m} w_m = \ell_a\]
\noindent be the paths we traverse the tree in our pruning (from $v \to \ell_p$) and grafting (from $v \to \ell_a$) operations respectively. Note that $\alpha_e = \alpha'_e$ for all edges $e \notin \{e_i^p\} \cup \{e_i^a\}$ not along either of these paths. Moreover, every edge along the pruning path loses exactly one descendant leaf and every edge along the attachment path gains exactly one leaf.  The edges in $T,T'$ are exactly the same, except for the relocation of the cherry, and hence we pair the descendant counts $\alpha_e, \alpha'_e$

\[\sum_i (f(\alpha_i) - f(\alpha'_i))=  \sum_{i=1}^{n-1} [f(\alpha_{e_i^p}) -  f(\alpha_{e_i^p} - 1)] + \sum_{i=1}^{m-1} [f(\alpha_{e_i^a}) -  f(\alpha_{e_i^a} + 1)].\]

\noindent Moreover,  by construction $m \leq n$ and $\alpha_{e_i^p} > \alpha_{e_i^a}$. This is because we always choose the larger subtree for pruning and the smaller for attaching. Thus $\alpha_{e_1^p} > \alpha_{e_1^a}$ and hence by induction
\[\alpha_{e_i}^p \geq \bigg\lceil \frac{\alpha_{e_{i-1}^p}}{2} \bigg \rceil > \bigg\lfloor \frac{\alpha_{e_{i-1}^a}}{2} \bigg \rfloor \geq \alpha_{e_i^a}\]
\noindent so $\alpha_{e_i^p} > \alpha_{e_i^a}$ is true for all $i$. Using this, we can see

\[= \sum_{i=1}^{m-1} [f(\alpha_{e_i^p}) -  f(\alpha_{e_i^p} - 1)] + [f(\alpha_{e_i^a}) -  f(\alpha_{e_i^a} + 1)] + \sum_{i=m}^{n-1} [f(\alpha_{e_i^p}) -  f(\alpha_{e_i^p} - 1)].\]

\noindent Each term in the first summand is nonnegative as $f$ is convex, and each term in the second summand is nonnegative as $f$ is increasing. Hence $\sum (f(\alpha_i) - f(\alpha_i')) \geq 0$ and the result follows.

Lastly, we argue that any tree can be converted into a balanced tree by iterating this procedure. If the subtrees of a given unbalanced vertex have size $L,R$ respectively where $L-R \geq 2$ then clearly after this procedure they have sizes $L-1, R+1$ respectively with imbalance $(L-1) - (R+1) = (L-R) - 2$. Hence the imbalance decreases by two, and if we iterate this procedure enough times we can get the imbalance to be $(L-R) \; mod(2)$, namely the vertex is balanced. Moreover, the descendant counts of any vertices not contained in one of the subtrees of $v$ are left unchanged. Hence we can start at the root and iteratively apply the procedure until the root is balanced. Then we can work our way down the tree. 
\end{proof}

This type of iterative subtree balancing is reminiscent of measures of tree balance in phylogenetics, where metrics such as the Colless or Sackin indices quantify how balanced a tree is relative to the maximally balanced or maximally imbalanced (caterpillar) configurations. Tree balance statistics have long been used in phylogenetics to characterize the shapes of evolutionary trees, compare them to null models, and infer evolutionary processes such as speciation or extinction biases \cite{kersting2025tree}. 

Classical tree-balancing procedures in computer science—such as AVL trees \cite{adelson1962algorithm} and weight-balanced trees \cite{nievergelt1973binary}—use local rotations to enforce constraints on subtree depths or sizes. Our algorithm shares the same conceptual goal of redistributing leaves to reduce imbalance, but it is purely combinatorial and does not maintain any search property.

\newpage 
\subsection{Proof Lemma \ref{lem:deterministic_balancing_lemma}: Deterministic Balancing Lemma}
\label{app:deterministic-balancing-proof}

Recall we aim to prove:\\

\noindent \textbf{Lemma:} Let $k \geq 4$ be fixed. Then for any fixed $T$
    \[Z^T_{i} + Z^T_{k-i} \leq_{st} Z^T_{j} + Z^T_{k-j},  \quad  1 \leq i \leq j \leq \frac{k}{2}.\]
    \noindent Namely, the more balanced the subtrees are below edge $e$, the more lineages typically enter $e$.

\begin{proof}[Proof Lemma \ref{lem:deterministic_balancing_lemma}]

 The statement is vacuously true for $k=2,3$, so for induction assume it is true for all $k' < k$. Let $S_{k,m}^T := Z_m^T + Z^T_{k-m}$ denote the total number of lineages entering an edge $e$ with subtrees of size $m,k-m$, the setup in Figure \ref{fig:coalescent_tree}. Let $\tau_{k,m}$ be the time of the first coalescent event in either of our two subpopulations of size $m,k-m$, allowing the possibility $\tau_{k,m} > T$.

  The basic idea is that if we start with $k$ lineages, we can reduce to $k-1$ lineages by just waiting for the first coalescence event to occur in either subpopulation. There is of course the chance no coalescent event occurs, so we must handle this separately. The proof then relies on a few main ideas. Firstly, for more balanced splits we tend to wait longer until the first coalescent event.
  \begin{proofclaim}
      For fixed $k$, $\tau_{k,m}$ is stochastically increasing in $m$ for $1 \leq m \leq k/2$.
      \label{claim:even_split_waits_longest_for_first_coalescent}
  \end{proofclaim}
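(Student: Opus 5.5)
The claim concerns $\tau_{k,m}$, the first coalescence time among two independent Kingman populations of sizes $m$ and $k-m$, with no truncation at $T$. I would prove it by identifying its law exactly. Before any coalescence, each of the ${m \choose 2}$ pairs in the first population and each of the ${k-m \choose 2}$ pairs in the second coalesces at rate one, independently. So $\tau_{k,m}$ is the minimum of two independent exponentials with rates ${m\choose 2}$ and ${k-m\choose 2}$. Hence
\[
\tau_{k,m} \sim \mathrm{Exp}\!\left(\lambda_{k,m}\right), \qquad \lambda_{k,m} := {m \choose 2} + {k-m \choose 2}.
\]
Here we use the convention ${1\choose 2}=0$, and the rate is positive whenever $k \geq 3$. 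If the claim is read with truncation, i.e.\ $\tau_{k,m}\wedge T$ or with the event $\{\tau_{k,m}>T\}$ kept as a point mass at $+\infty$, the argument is the same, because these are nondecreasing functions of the untruncated time.

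Since exponential laws are stochastically ordered inversely to their rates ($\P(\tau>t)=e^{-\lambda t}$), it suffices to show that $m\mapsto \lambda_{k,m}$ is nonincreasing on $1\leq m\leq k/2$. Expanding gives
\[
\lambda_{k,m} = \tfrac12\left[m^2 + (k-m)^2 - k\right],
\]
which is a convex quadratic in $m$, symmetric about $m=k/2$. Its discrete increment is
\[
\lambda_{k,m+1}-\lambda_{k,m} = m - (k-m-1) = 2m+1-k,
\]
which is $\leq 0$ exactly when $m \leq (k-1)/2$. For integers $m$ with $m+1 \leq k/2$ we have $m \leq k/2 - 1 < (k-1)/2$, so the increment is nonpositive over the whole range. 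Therefore $\lambda_{k,m}$ decreases as $m$ increases toward $\lfloor k/2\rfloor$, and $\P(\tau_{k,m}>t)=e^{-\lambda_{k,m}t}$ is nondecreasing in $m$ for every $t$. That is precisely first-order stochastic dominance.

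The only step needing care is the first one: the independence of the two subpopulations and the memoryless competing-exponentials structure, which is what justifies the exact $\mathrm{Exp}(\lambda_{k,m})$ law. Once that is in place, no induction or earlier lemma is needed beyond this elementary computation. The counting identity also makes the heuristic stated after Lemma~\ref{lem:deterministic_balancing_lemma} precise: the balanced split minimizes the total number of coalescible pairs, and so it waits longest for the first coalescence.
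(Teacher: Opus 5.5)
Your proof is correct and follows the paper's argument: the paper also writes $\tau_{k,m}$ as a minimum of competing exponentials, so $\tau_{k,m}\sim\mathrm{Exp}(h_{k,m})$ with $h_{k,m}=\binom{m}{2}+\binom{k-m}{2}$, and concludes from $h_{k,m}$ being decreasing on $1\le m\le k/2$. Your increment computation $h_{k,m+1}-h_{k,m}=2m+1-k$ supplies the monotonicity step that the paper asserts without proof.
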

  \noindent Next, conditional on a specific first coalescent time, the more balanced our split is, the more lineages tend to remain uncoalesced by the end of the branch.
  \begin{proofclaim}
      For fixed $k, x,t$, $\P(S_{k,m}^T \geq x \; | \; \tau_{k,m} = t)$ is an increasing function of $m$ for $1 \leq m \leq k/2$. 
\label{claim:cond_first_coalescent_even_split_dominates}
  \end{proofclaim}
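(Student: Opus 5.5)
The plan is to prove Claim~\ref{claim:cond_first_coalescent_even_split_dominates} by conditioning on the first coalescent event and then using the outer induction hypothesis (the Deterministic Balancing Lemma for $k-1$ lineages). Throughout, $k$, $x$ and $t$ are fixed.

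\textbf{Case $t > T$.} Conditional on $\tau_{k,m}=t>T$, no coalescence occurs on the branch. Hence $S^T_{k,m}=k$ deterministically, so the conditional probability is $\ind\{k\geq x\}$. This does not depend on $m$, and the claim is trivial.

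\textbf{Case $t\leq T$.} At time $t$ exactly one coalescence occurs. It happens in the subpopulation of size $m$ with probability $p_m := \binom{m}{2}/\bigl(\binom{m}{2}+\binom{k-m}{2}\bigr)$, and in the other one with probability $1-p_m$. Which subpopulation is hit is independent of the value of $\tau_{k,m}$, by the standard competing-exponentials fact. By the strong Markov (memoryless) property of Kingman's coalescent, after time $t$ the two subpopulations evolve independently for the remaining time $T-t$. Writing $F(j) := \P(Z^{T-t}_j + Z^{T-t}_{k-1-j}\geq x)$, I obtain
\[
\P(S^T_{k,m}\geq x \mid \tau_{k,m}=t) = p_m\, F(m-1) + (1-p_m)\, F(m).
\]
When $m=1$ we have $p_1=0$, so the undefined term $F(0)$ never appears. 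Note that $F(j)=F(k-1-j)$ by symmetry. By the induction hypothesis (the lemma applied with $k-1$ in place of $k$ and $T-t$ in place of $T$), $F(j)$ is nondecreasing in $j$ on $1\leq j\leq (k-1)/2$. Together with the symmetry, this means $F(j)=F(\min(j,k-1-j))$ is increasing in $\min(j,k-1-j)$.

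\textbf{Comparing $m$ with $m+1$.} It suffices to show the conditional probability at $m$ is at most that at $m+1$ whenever $m+1\leq k/2$. Rather than comparing the weights $p_m$ and $p_{m+1}$ directly, I sandwich both expressions around $F(m)$.
\begin{itemize}
\item For split $m$: the two outcomes have smaller-side indices $m-1$ and $m$. Both are at most $m\leq (k-2)/2$, so both lie in the monotone region. Hence each term is at most $F(m)$, and so is their convex combination.
\item For split $m+1$: the outcomes are $F(m)$ and $F(m+1)=F(\min(m+1,k-2-m))$. Since $m+1\leq k/2$ gives $k-2-m\geq m$, the index $\min(m+1,k-2-m)$ is at least $m$. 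Hence both terms are at least $F(m)$.
\end{itemize}
Chaining the two gives $\P(S^T_{k,m}\geq x\mid\tau_{k,m}=t)\leq F(m)\leq \P(S^T_{k,m+1}\geq x\mid\tau_{k,m+1}=t)$, which is the claim.

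\textbf{Main obstacle.} The delicate step is justifying the conditional decomposition: that conditioning on $\tau_{k,m}=t$ leaves the post-$t$ dynamics as a fresh pair of independent coalescents of sizes $(m-1,k-m)$ or $(m,k-m-1)$, run for time $T-t$ with the stated weights. This needs the memoryless/strong Markov property and the independence of the type of the first event from its time. A secondary point is to make sure the induction hypothesis is invoked for general branch length $T-t$, not just $T$; the lemma is stated for every fixed $T$, so this is allowed. The small cases ($k\leq 3$ and the boundary $m=1$) are handled directly by the remarks above.
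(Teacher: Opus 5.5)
Your proposal is correct and follows essentially the same route as the paper. The paper also conditions on the first coalescence and writes the conditional probability as the $p_{k,m}$-mixture of $\P(S_{k-1,m-1}^{T-t}\geq x)$ and $\P(S_{k-1,m}^{T-t}\geq x)$. It then sandwiches the expressions for $m$ and $m+1$ around $F(m)=\P(S_{k-1,m}^{T-t}\geq x)$ via the induction hypothesis.

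Your explicit use of the symmetry $F(j)=F(k-1-j)$ is a genuine refinement. It covers the boundary case $m+1=k/2$ with $k$ even, where $m+1>(k-1)/2$ and the paper's bare appeal to the inductive hypothesis does not literally apply. Your indicator $\mathbf{1}\{k\geq x\}$ in the case $t>T$ is likewise more accurate than the paper's value $1$.
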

  \noindent Lastly, the more time we wait until the first coalescent event, the less time the remaining lineages have to coalesce and the more lineages we tend to have by the end of the branch. 
  %
  \begin{proofclaim}
      For fixed $k, x, m$, $\P(S_{k,m}^T \geq x \; | \; \tau_{k,m} = t)$ is an increasing function of $t$.
      \label{claim:more_time_to_coalesce_is_better}
  \end{proofclaim}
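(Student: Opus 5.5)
The plan is to compute the conditional law of $S_{k,m}^T$ given $\tau_{k,m}=t$ explicitly, using the strong Markov property. Then each piece is monotone in $t$ by Lemma~\ref{lem:cdf_of_Zi}.

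First I treat the case $t>T$. On the event $\{\tau_{k,m}>T\}$ no coalescence occurs in either subpopulation during the branch, so $S_{k,m}^T=k$ deterministically. This is the largest value $S_{k,m}^T$ can take, so $\P(S_{k,m}^T\geq x\mid \tau_{k,m}=t)=\ind\{x\leq k\}$ for $t>T$, which is at least the conditional probability for any $t\leq T$. Monotonicity across $t=T$ and on $(T,\infty)$ is therefore immediate. (Formally one conditions on $\{\tau_{k,m}>T\}$, or defines the conditional law at $t>T$ this way.)

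Next I treat $t\leq T$. The two subpopulations coalesce independently, with first-event times $\tau^{(1)}\sim\mathrm{Exp}\binom{m}{2}$ and $\tau^{(2)}\sim\mathrm{Exp}\binom{k-m}{2}$; if $m=1$ the rate is $0$ and $\tau^{(1)}=\infty$. Here $\tau_{k,m}=\min(\tau^{(1)},\tau^{(2)})$. By the standard competing-exponentials fact, the identity of the subpopulation achieving the minimum is independent of $\tau_{k,m}$. It is the first subpopulation with probability
\[
\rho:=\binom{m}{2}\Big/\Big[\binom{m}{2}+\binom{k-m}{2}\Big],
\]
which does not depend on $t$. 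By the strong Markov property at time $\tau_{k,m}=t$, the two populations then restart with sizes $(m-1,k-m)$ or $(m,k-m-1)$ and run independently for the remaining time $T-t$. This gives
\[
\P(S_{k,m}^T\geq x\mid\tau_{k,m}=t)=\rho\,\P\bigl(Z^{T-t}_{m-1}+Z'^{T-t}_{k-m}\geq x\bigr)+(1-\rho)\,\P\bigl(Z^{T-t}_{m}+Z'^{T-t}_{k-m-1}\geq x\bigr),
\]
with $Z$ and $Z'$ independent.

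Finally I apply Lemma~\ref{lem:cdf_of_Zi}. Each $Z^{s}_i$ is stochastically decreasing in $s$, so increasing $t$ decreases $s=T-t$ and stochastically increases every summand. A sum of independent variables is stochastically monotone whenever each summand is; this follows by conditioning on one summand exactly as in the proof of Corollary~\ref{corollary:stochastic_dominance_composition}. So both terms are nondecreasing in $t$. Since the weights $\rho,1-\rho$ are independent of $t$, the mixture is nondecreasing in $t$ as well.

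The only delicate step is justifying that $\rho$ does not depend on $t$, together with the restart at a random time; both follow from memorylessness of the exponential clocks. Everything else is direct.
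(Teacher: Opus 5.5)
Your proposal is correct and follows essentially the same route as the paper. Both condition on the first coalescent event as in \eqref{eqn:split_on_first_coalescent_event}, note that the weights $p_{k,m}$ do not depend on $t$, and use that the number of surviving lineages is stochastically decreasing in the remaining time $T-t$. The differences are minor. The paper gets that monotonicity by coupling $S_{k,m}^T$ and $S_{k,m}^s$ through shared clocks, whereas you derive it summand by summand from Lemma~\ref{lem:cdf_of_Zi} and closure of $\leq_{st}$ under independent sums. You also make the $t>T$ case and the comparison across $t=T$ explicit, which the paper leaves implicit.
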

  %
  \begin{proof}[Proof of Claim \ref{claim:even_split_waits_longest_for_first_coalescent}]
        Let $h_{k,m} := {m \choose 2} + {k-m \choose 2}$. Then $\tau_{k,m} \sim \mathrm{Exp}\left(h_{k,m}\right)$. This comes from the fact that coalescents occur at rate ${m \choose 2}$ in the population of size $m$ and at rate ${k-m \choose 2}$ in the population of size $k-m$. Together, these produce coalescent events at rate $h_{k,m}$. Since $\tau_{k,m} \sim \exp(h_{k,m})$
        \[\P(\tau_{k,m} > t) = \exp\left(-th_{k,m}\right)\]
        \noindent As $h_{k,m}$ is decreasing in $m$ for $1 \leq m \leq k/2$, the above is increasing in $m$. Hence $\tau_{k,m}$ is stochastically increasing  in $m$. This proves Claim 1.
  \end{proof}

  \begin{proof}[Proof of Claim \ref{claim:cond_first_coalescent_even_split_dominates}]
      For $t > T$ then we $\P(S_{k,m}^T \geq x | \tau_{k,m} = t) = 1$ so the claim is trivially true. For $t < T$, by the strong memoryless property of the exponential distribution underlying Kingman's coalescent, we know
        \begin{equation}
            \P(S_{k,m}^T \geq x \; | \; \tau_{k,m} = t) = p_{k,m} \cdot \P(S_{k-1,m-1}^{T-t} \geq x) + (1-p_{k, m}) \cdot \P(S_{k-1,m}^{T-t} \geq x)
            \label{eqn:split_on_first_coalescent_event}
        \end{equation}

        \noindent where
        \[p_{k,m} = \P\left[\exp\left({m \choose 2}\right) \leq \exp\left({k-m \choose 2}\right)\right] = \frac{{m \choose 2}}{{m \choose 2} + {k-m \choose 2}}\]
        \noindent is the probability the first coalescence occurs in the population of size $m$. This essentially just says that after the first coalescent event occurs in either subpopulation, what remains is still a Kingman's coalescent just with one less lineage. By our inductive hypothesis, for $1 \leq m \leq k/2 - 1$
        \begin{align*}
             & \P(S_{k,m}^T \geq x \; | \; \tau_{k,m} = t)\\
             &  = p_{k,m} \cdot \P(S_{k-1,m-1}^{T-t} \geq x) + (1-p_{k, m}) \cdot \P(S_{k-1,m}^{T-t} \geq x) \\
            & \leq \P(S_{k-1,m}^{T-t} \geq x) \\ 
            & \leq  p_{k,m+1} \cdot \P(S_{k-1,m}^{T-t} \geq x) + (1-p_{k, m+1}) \cdot \P(S_{k-1,m+1}^{T-t} \geq x)\\
            & = \P(S_{k,m+1}^T \geq x \; | \; \tau_{k,m+1} = t)
        \end{align*}
        \noindent so that $\P(S_{k,m}^T \geq x \; | \; \tau_{k,m} = t)$ is increasing in $m$ for each fixed $t$. This proves Claim 2.
  \end{proof}

  \begin{proof}[Proof of Claim \ref{claim:more_time_to_coalesce_is_better}]
      By equation \eqref{eqn:split_on_first_coalescent_event} it suffices to show for fixed $k,m$ that $S^T_{k,m}$ is stochastically decreasing in $T$. But this follows from a simple coupling argument. For $s \leq T$, couple $S_{k,m}^T$ to $S_{k,m}^s$ by using the same exponential clocks for the coalescent events. At time $s$ the number of lineages remaining agree, and $S^T_{k,m}$ can only decrease from here if more coalescents occur between times $s$ and $T$. This proves Claim 3. 
  \end{proof}

  \noindent With the claims proved, we are ready to finally finish our proof of Lemma \ref{lem:deterministic_balancing_lemma}. Note that our above claims are exactly the conditions of Lemma \ref{lem:mixtures_and_stochastic_dominance} where $\{S_{k,m}^T\}_{m=1}^{\lfloor k/2\rfloor}$ are playing the role of our $V_i$ and $\{\tau_{k,m}\}_{m=1}^{\lfloor k/2\rfloor}$ are playing the role of our $\Theta_i$. Hence the result just follows from applying this lemma. This completes the proof of Lemma \ref{lem:deterministic_balancing_lemma}.

\end{proof}

\subsection{Proof of Lemma \ref{lem:balanced_is_worse_case}: Balanced is Worst-Case}
\label{app:balanced-worst-case-proof}

Recall we aim to prove:\\

\noindent \textbf{Lemma:} If $\cB_k$ is the balanced tree with $k$ leaves and all branches length $T_{\mathrm{min}}$, then $X_{\cB_k} \geq_{st} X_\cT$ for any other tree $\cT$ with $k$ leaves and minimum branch length $T_{\mathrm{min}}$.\\

\noindent To simplify notation a bit, denote $X_m := X_{\cB_m}$.  To start, we will prove two extensions of our deterministic balancing lemma \ref{lem:deterministic_balancing_lemma}. The first allows the difference between the subpopulation sizes to be random. Heuristically, it  says that conditional on the total number of species in the two subtrees below a given vertex, the more even the split tends to be, the more lineages tend to reach the vertex.
\begin{lemma}
    Suppose for $s \in \N$ that $0 \leq_{st} D \leq_{st} D' < s$ and $D,D' \equiv s \; \mathrm{mod}(2)$. Then
    \[Z_{\frac{s + D}{2}} + Z_{\frac{s - D}{2}} \geq_{st} Z_{\frac{s + D'}{2}} + Z_{\frac{s - D'}{2}}.\]  
\label{lem:stochastic_dominance_with_differences}
\end{lemma}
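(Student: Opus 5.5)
The plan is to reduce the claim to the deterministic balancing lemma, Lemma~\ref{lem:deterministic_balancing_lemma}, by conditioning on the difference and then averaging over a monotone function. I interpret the statement with $D$ independent of the family $\{Z_\ell\},\{Z'_\ell\}$ (and likewise $D'$), so that $Z_{\frac{s+D}{2}}+Z_{\frac{s-D}{2}}$ is the mixture over $D$ of the fixed-index sums.

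\emph{Step 1 (the deterministic case).} For $d\in\{0,\dots,s-1\}$ with $d\equiv s \pmod 2$, define
\[\psi_x(d):=\P\bigl(Z_{\frac{s+d}{2}}+Z'_{\frac{s-d}{2}}\geq x\bigr).\]
Writing $i=\frac{s-d}{2}$, the two indices are $i$ and $s-i$, with $i$ ranging over $1\le i\le s/2$; the constraint $d<s$ guarantees $i\geq 1$. Increasing $d$ by $2$ decreases $i$ by $1$. Lemma~\ref{lem:deterministic_balancing_lemma}, applied with $k=s$, says $Z_i+Z'_{s-i}$ is stochastically increasing in $i$ on $1\le i\le s/2$. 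Hence $\psi_x(d)$ is a nonincreasing function of $d$ on the admissible lattice $\{d<s,\ d\equiv s\}$, for every $x$.

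\emph{Step 2 (averaging).} Conditioning on $D$ gives
\[\P\bigl(Z_{\frac{s+D}{2}}+Z'_{\frac{s-D}{2}}\geq x\bigr)=\E\,\psi_x(D),\]
and similarly for $D'$. Since $D\leq_{st}D'$ and $-\psi_x$ is nondecreasing on the common support, we get $\E\psi_x(D)\geq \E\psi_x(D')$ for all $x$, which is the claimed stochastic dominance. To make the monotone function defined on all of $\R$, I would extend $\psi_x$ to a nonincreasing step function, e.g.\ $\tilde\psi_x(y):=\psi_x(d)$ for the largest admissible $d\le y$, and $\psi_x(d_{\min})$ below the smallest admissible value. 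Because $D$ and $D'$ are supported on admissible values, $\tilde\psi_x$ agrees with $\psi_x$ there, so the characterization of $\leq_{st}$ via expectations of monotone functions applies directly. This is essentially Lemma~\ref{lem:mixtures_and_stochastic_dominance} with $\Theta$ played by $-D$ and no dependence of the conditional law on the index.

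\emph{Main obstacle.} The substantive content is entirely in Lemma~\ref{lem:deterministic_balancing_lemma}, so the only delicate points are bookkeeping. First, the index translation $d\leftrightarrow i$ must respect the range $1\le i\le s/2$; $d=0$ (possible only for even $s$) corresponds to $i=s/2$, which the lemma covers. Second, the independence conventions must be stated explicitly: each mixture draws fresh independent families $Z,Z'$, and $D$ is independent of them. If one wants the version where $D$ is coupled to other randomness, I would instead condition on $D$ only through its law, which suffices since only marginal distributions of the sums are compared.
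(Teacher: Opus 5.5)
Your proof is correct and follows the paper's argument. Like the paper, you use the deterministic balancing lemma (with $k=s$ and $i=(s-d)/2$) to show that $Z_{\frac{s+d}{2}}+Z_{\frac{s-d}{2}}$ is stochastically nonincreasing in $d$, and then average this monotone tail function against $D\leq_{st}D'$; the paper cites Lemma~\ref{lem:mixtures_and_stochastic_dominance} for that averaging step, while you carry it out directly, which is slightly cleaner since the conditional law given $D=d$ does not depend on which variable is being mixed over.
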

\begin{proof}[Proof of Lemma \ref{lem:stochastic_dominance_with_differences}]
    Since the sum $s = (s+d)/2 + (s-d)/2$ is fixed, our deterministic balancing lemma \ref{lem:deterministic_balancing_lemma} implies that
    \[Y_d := Z_{\frac{s+d}{2}} + Z_{\frac{s-d}{2}}\]
    \noindent is decreasing in $d$ with respect to $\leq_{st}$ for $0 \leq d < s$. Lemma \ref{lem:mixtures_and_stochastic_dominance} then implies the result. 
\end{proof}

\noindent The next extension generalizes this even further by allowing the sum of the subpopulation sizes to be random as well. Heuristically, it captures the same idea as before: more balanced subpopulations tend to produce less coalescence.

  \begin{lemma}[Balancing Lemma] 
    If $A,A', B, B'$ are mutually independent, positive integer-valued random variables so that $A \leq_{lr} A', B \leq_{lr} B'$, then
    \[Z_{A + B} + Z_{A' + B'} \leq_{st} Z_{A + B'} + Z_{A' + B}. \]
    \label{lem:balancing_lemma}
    \end{lemma}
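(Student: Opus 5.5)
The plan is to reduce the Balancing Lemma to the deterministic balancing lemma (Lemma~\ref{lem:deterministic_balancing_lemma}) via Lemma~\ref{lem:stochastic_dominance_with_differences}. We condition on the total $s$ and on a random difference $D$. Concretely, put $S = A+B+A'+B'$, which is the same total on both sides. The left side splits $S$ into parts $A+B$ and $A'+B'$. The right side splits it into $A+B'$ and $A'+B$. Conditionally on $S=s$, define
\[
D := \bigl|(A'+B') - (A+B)\bigr|, \qquad D' := \bigl|(A+B') - (A'+B)\bigr| = \bigl|(B'-B)-(A'-A)\bigr|.
\]
Writing $X=A'-A$ and $Y=B'-B$, we have $D = |X+Y|$ and $D' = |Y-X|$. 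Both have the parity of $s$. Since $Z_a+Z_b$ depends only on the unordered pair $\{a,b\}$, the left side is exactly $Z_{(s+D)/2}+Z_{(s-D)/2}$, and similarly for the right side with $D'$. By Lemma~\ref{lem:stochastic_dominance_with_differences}, applied to the conditional laws, it then suffices to show $D' \leq_{st} D$ conditionally on $S=s$ for every $s$. Integrating over $s$ finishes the argument.

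The core step is the conditional comparison $|X-Y| \leq_{st} |X+Y|$ given $S=s$. To prove it, I would use a swapping/symmetrization argument on the joint mass function of $(A,A',B,B')$. Consider the involution that swaps $B \leftrightarrow B'$. It preserves $S$ and maps $(X,Y)\mapsto(X,-Y)$, so it exchanges $|X+Y|$ with $|X-Y|$. For a configuration $(a,a',b,b')$ and its image $(a,a',b',b)$, the ratio of probabilities is
\[
\frac{f_A(a)f_{A'}(a')f_B(b)f_{B'}(b')}{f_A(a)f_{A'}(a')f_B(b')f_{B'}(b)}.
\]
Likelihood ratio ordering $B\leq_{lr}B'$ says this ratio is at least $1$ exactly when $b'\ge b$. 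Similarly, swapping $A\leftrightarrow A'$ is favorable exactly when $a'\ge a$. Now group each configuration into its orbit of size at most four under the group generated by the two swaps. Within an orbit the values of $|X|$ and $|Y|$ are fixed, so the pair $\{|X|+|Y|,\ \bigl||X|-|Y|\bigr|\}$ is fixed. The two orbit points with $X,Y$ of the same sign carry $|X+Y|=|X|+|Y|$. Among these, the point with $X,Y\ge 0$ carries the largest weight, because it is favorable under both swaps. The same-sign points also dominate their opposite-sign partners: the point $(+,+)$ has weight at least that of $(+,-)$ and of $(-,+)$. Also $(+,-)\cdot(-,+)=(+,+)\cdot(-,-)$ as products of weights, which follows from the factorization.

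From this, within each orbit, the mass on the larger value $|X|+|Y|$ is at least the mass on it under the swapped assignment. This requires the inequality $w_{++}+w_{--}\ge w_{+-}+w_{-+}$, which holds given $w_{++}w_{--}=w_{+-}w_{-+}$ and $w_{++}\ge \max(w_{+-},w_{-+})$. The reason is that $(w_{++}-w_{+-})(w_{++}-w_{-+})\ge0$ rearranges to exactly this inequality after dividing by $w_{++}$. Hence, orbit by orbit, $\P(|X+Y|\ge t,\ \text{orbit}) \ge \P(|X-Y|\ge t,\ \text{orbit})$. Summing over orbits with $S=s$ gives $D\geq_{st} D'$ conditionally.

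The main obstacle is the bookkeeping in this orbit argument. Degenerate orbits arise when $X=0$ or $Y=0$, and there both values coincide, so nothing needs to be shown. The other delicate point is checking that $D,D'<s$, as Lemma~\ref{lem:stochastic_dominance_with_differences} requires. This holds because all variables are positive, so each part is at least $2$ and the difference is at most $s-4$. A second point to verify is that Lemma~\ref{lem:stochastic_dominance_with_differences} may be applied conditionally with $D,D'$ not independent of the $Z$'s. This is fine, since the $Z$ families are independent of $(A,A',B,B')$, and its proof is just a mixture argument.
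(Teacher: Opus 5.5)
Your proposal is correct and follows essentially the same route as the paper. Your swap-orbits are exactly the paper's conditioning sets (fixed $S_A, S_B, |\Delta_A|, |\Delta_B|$), and your inequality $w_{++}+w_{--}\ge w_{+-}+w_{-+}$ is the unnormalized form of the paper's $(p_A-\tfrac12)(p_B-\tfrac12)\ge 0$. You obtain it the same way the paper does, from $A\le_{lr}A'$, $B\le_{lr}B'$ and the product structure given by independence. Your explicit checks that $D,D'<s$ and that Lemma~\ref{lem:stochastic_dominance_with_differences} applies conditionally fill in details the paper leaves implicit. One minor wording fix: likelihood ratio ordering makes the swap favorable \emph{when} $b'\ge b$, not ``exactly when''.
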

    \noindent Note when $A,B,A', B'$ are all deterministic, this reduces to Lemma \ref{lem:deterministic_balancing_lemma}. 

    \begin{proof}[Proof of Balancing Lemma \ref{lem:balancing_lemma}]
        For convenience, define
        \[\Delta_A := A' - A, \quad \Delta_B := B' - B, \quad S_A := A' + A, \quad S_B = B' + B.\]
        \noindent Now, note in both $Z_{A + B} + Z_{A' + B'}$ and $Z_{A + B'} + Z_{A' + B}$ the total number of lineages $S = S_A + S_B$ is the same. Hence by conditioning on $S_A,S_B, |\Delta_A|,$ and $|\Delta_B|$, Lemma \ref{lem:stochastic_dominance_with_differences} implies it suffices to show $D' \geq_{st} D$ where $D,D'$ are the conditional differences
        \[D' = |\Delta_A + \Delta_B| \mid \{|\Delta_A|, |\Delta_B| , S_A, S_B \}, \qquad D = |\Delta_A - \Delta_B| \mid \{|\Delta_A|, |\Delta_B| , S_A, S_B \}.\]
        \noindent After this conditioning, $\{D, D'\} = \{|\Delta_A| + |\Delta_B|, ||\Delta_A| - |\Delta_B||\}$. Let
        
        \[q = \P(D' = |\Delta_A| + |\Delta_B|) = \P(\Delta_A\Delta_B \geq 0 \mid |\Delta_A|, |\Delta_B| , S_A, S_B)\]
        
        \noindent be the probability $D'$  takes on the larger of these two values. Then it suffices to show $q \geq 1/2$. Since $\Delta_A, \Delta_B$ are conditionally independent given $S_A, S_B$
        \[q = p_Ap_B + (1-p_A)(1-p_B) \geq \frac{1}{2} \iff \left(p_A - \frac{1}{2} \right) \left(p_B - \frac{1}{2} \right) \geq 0,\]
        \noindent where $p_A = \P(\Delta_A \geq 0 \mid |\Delta_A|, S_A)$ and likewise for $p_B$. We will show that $p_A, p_B \geq 1/2$, completing the proof. Note conditional on $\{|\Delta_A| = m, S_A = s\}$ then $p_A \geq 1/2$ occurs precisely when
        \[\P\left(A'=\frac{s+m}{2}, A = \frac{s-m}{2}\right) \geq \P\left(A'=\frac{s-m}{2}, A = \frac{s+m}{2}\right).\]
        \noindent Since $A,A'$ are independent, this follows from $A \leq_{lr} A'.$ Similarly, $p_B \geq 1/2$. Thus $D' \geq_{st} D$. 
    \end{proof}


\noindent Now we are ready to return to the main result. First, we show that the result follows from the following stochastic extension of Lemma \ref{lem:deterministic_balancing_lemma}:

\begin{lemma}
    For fixed $k \in \N$, then $Z_{X_i} + Z_{X_{k-i}} \leq_{st} Z_{X_{\lfloor k/2\rfloor}} + Z_{X_{\lceil k/2\rceil}}$ for $1 \leq i \leq \lfloor k/2\rfloor$. 
    \label{lem:the_more_balanced_the_better}
\end{lemma}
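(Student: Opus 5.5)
We need to prove that $Z_{X_i} + Z_{X_{k-i}} \leq_{st} Z_{X_{\lfloor k/2\rfloor}} + Z_{X_{\lceil k/2\rceil}}$ for $1 \le i \le \lfloor k/2\rfloor$. The plan is to prove the stronger chain statement
\[
Z_{X_i} + Z_{X_{k-i}} \leq_{st} Z_{X_{i+1}} + Z_{X_{k-i-1}}, \qquad 1 \leq i < \lfloor k/2\rfloor,
\]
and then conclude by transitivity of $\leq_{st}$. Each step moves one leaf from the larger balanced subtree (size $k-i$) to the smaller one (size $i$). Since $i+1 \le k-i-1$, the smaller side stays smaller throughout.

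The key device is to express the single step as an instance of the Balancing Lemma~\ref{lem:balancing_lemma}, with each $X_m$ decomposed along its recursive structure. For $m \ge 2$,
\[
X_m \overset{d}{=} Z_{X_{\lfloor m/2\rfloor}} + Z_{X_{\lceil m/2\rceil}}.
\]
However, the Balancing Lemma is phrased as $Z_{A+B}$ rather than $Z_{X}$ with $X$ a sum of mixtures, so I would first simply set
\[
A = X_i,\quad A' = X_{i+1},\quad B = X_{k-i-1},\quad B' = X_{k-i}.
\]
Then the left side of the chain step is $Z_{A+B'}$-type only if the subtree sums match, which they do not. So this naive assignment is wrong. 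Instead, I would use the form actually provided: $Z_{X_i}$ and $Z_{X_{k-i}}$ are independent mixtures $Z_A$, $Z_{A'}$. What is needed is a "one-leaf transfer" monotonicity for $u(a,b) := Z_{X_a} + Z_{X_b}$ with $a+b$ fixed and $a \le b$.

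My concrete plan is an induction on $k$ that mirrors the proof of Lemma~\ref{lem:balanced_is_worse_case}. Write $X_{k-i} = Z_{X_{\lfloor (k-i)/2\rfloor}} + Z_{X_{\lceil (k-i)/2\rceil}}$, and similarly for $X_i$, $X_{i+1}$ and $X_{k-i-1}$. Moving one leaf changes exactly one of the four grandchild sizes upward on the small side and one downward on the big side. Set
\begin{itemize}
\item $A$ to be the piece of $X_i$ that grows, and $A'$ the piece of $X_{k-i}$ that shrinks, with $A'$ of larger size than $A$;
\item $B$, $B'$ to be the remaining untouched pieces.
\end{itemize}
The Balancing Lemma~\ref{lem:balancing_lemma} is then applied with the $\leq_{lr}$ hypotheses supplied by Lemma~\ref{lem:xi_zxi_increasing_in_lro}. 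This uses the fact that the variables $Z_{X_m}$ are likelihood-ratio increasing in $m$, and that sums of these are ultra log-concave (Lemma~\ref{lem:xm_zxm_are_ultra_log_concave}), so convolution preserves $\leq_{lr}$ (Theorem~\ref{thm:convolutions_and_likelihood_ratio_order}). Finally, Corollary~\ref{corollary:stochastic_dominance_composition} lifts stochastic dominance of the lineage counts entering the two child edges through one more branch of Kingman's coalescent.

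An alternative I would keep in reserve is to treat the case where $k$ is a multiple of a power of two separately. There one can directly exchange whole balanced subtrees, which gives $Z_{A+B}+Z_{A'+B'} \leq_{st} Z_{A+B'}+Z_{A'+B}$ with $A = X_{a}$, $A' = X_{a'}$ exactly matching the Balancing Lemma~\ref{lem:balancing_lemma}.

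The main obstacle is the bookkeeping in the one-leaf transfer. The floors and ceilings in the recursion mean that the transferred leaf propagates down different depths on the two sides. The Balancing Lemma must therefore be applied at the level where the two paths first differ, and then composed upward using Corollary~\ref{corollary:stochastic_dominance_composition} at every ancestor. I expect the delicate point to be verifying that the pairing $(A, A')$ at that level really satisfies $A \le_{lr} A'$. This amounts to showing the pruned subtree is at least as large as the grafted one, the same comparison $\alpha_{e_i^p} > \alpha_{e_i^a}$ used in Lemma~\ref{lem:balanced_trees_minimize_increasing_convex_sums}.
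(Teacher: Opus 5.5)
There is a genuine gap. Everything rests on the one-leaf step $Z_{X_i}+Z_{X_{k-i}}\leq_{st}Z_{X_{i+1}}+Z_{X_{k-i-1}}$, and none of the tools you invoke delivers it; the paper itself records this monotonicity in $i$ only as a conjecture. The Balancing Lemma~\ref{lem:balancing_lemma} compares two pairings of the \emph{same} four independent pieces. It can move whole pieces across the root but never changes their sizes, whereas a one-leaf transfer changes one grandchild size on each side. Write $G_s:=Z_{X_s}$. With your assignment ($A$ the growing piece of $X_i$, $A'$ the shrinking piece of $X_{k-i}$), the lemma outputs $Z_{A+B'}+Z_{A'+B}$, which swaps entire grandchild subtrees rather than transferring a leaf.

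Concretely, take $k=10$, $i=2$. The left side is $Z_{G_1+G_1}+Z_{G_4+G_4}$ and the target is $Z_{G_1+G_2}+Z_{G_3+G_4}$. The only nontrivial regrouping the lemma allows is $Z_{G_1+G_4}+Z_{G_1+G_4}$, a $(5,5)$ root split that overshoots the $(3,7)$ target. Getting back down would need a stochastic \emph{lower} bound on the target. The lemma only bounds the ``small$+$small, big$+$big'' grouping from above, and that is precisely the target's own grouping.

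Applying the lemma ``where the two paths first differ'' does not rescue this. The pruning path lies in $\mathcal{B}_{k-i}$ and the grafting path in $\mathcal{B}_i$, so they separate at the root. Below the root the two local effects have opposite signs ($X_{i+1}\geq_{st}X_i$ but $X_{k-i-1}\leq_{st}X_{k-i}$). Corollary~\ref{corollary:stochastic_dominance_composition} cannot combine these, because it needs both coordinates to move in the same direction. Likewise, the analogy with $\alpha_{e^p_i}>\alpha_{e^a_i}$ in Lemma~\ref{lem:balanced_trees_minimize_increasing_convex_sums} does not carry over. That argument uses additivity of $\sum_e f(\alpha_e)$ over edges, which the root lineage count does not have.

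The way out is to drop the chain and compare directly with the balanced split, which is what the paper does by strong induction on $k$. For a split $(m,k-m)$, order the grandchildren $G_{\lfloor m/2\rfloor}\leq_{lr}G_{\lceil m/2\rceil}\leq_{lr}G_{\lfloor (k-m)/2\rfloor}\leq_{lr}G_{\lceil (k-m)/2\rceil}$ using Lemma~\ref{lem:xi_zxi_increasing_in_lro}. Then apply the Balancing Lemma once:
\begin{itemize}
\item when $k$ is even, pair $\lfloor m/2\rfloor$ with $\lceil (k-m)/2\rceil$, so both groups have $k/2$ leaves;
\item when $k$ is odd, pair floors with floors and ceilings with ceilings, giving groups of $(k-1)/2$ and $(k+1)/2$ leaves.
\end{itemize}
The statement being proved, applied at these smaller sizes $k'$ as the induction hypothesis, dominates each group sum by the balanced one, which is distributed as $X_{k'}$. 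Corollary~\ref{corollary:stochastic_dominance_composition} then lifts this through the last branch to $Z_{X_{\lfloor k/2\rfloor}}+Z_{X_{\lceil k/2\rceil}}$. Your reserve idea of exchanging whole balanced subtrees is exactly this move. Choosing the floor/ceiling pairing this way makes it work for every $k$ and removes any need for monotonicity in $i$.
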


\noindent We conjecture that $Z_{X_i} + Z_{X_{k-i}}$ is increasing in $i$ rather than just maximized at the even split. But for our current purposes, the above suffices. Assuming Lemma \ref{lem:the_more_balanced_the_better} for now, it is easy to prove our desired Lemma \ref{lem:balanced_is_worse_case}.

\begin{proof}[Reduction of Lemma \ref{lem:balanced_is_worse_case} to Lemma \ref{lem:the_more_balanced_the_better}]
 We induct on the number of species $k$.\\

\noindent \textbf{Base Case ($k = 4$)}\\
\noindent Again there are only two different tree topologies, the ones in Figure \ref{fig:caterpillar_and_balanced_tree}. Lemma \ref{lem:deterministic_balancing_lemma} shows the even split $Z_2 + Z_2' \geq_{st} Z_1  + Z_3'$, and clearly $Z_1 + Z_3' \geq_{st} Z_1 + Z_{X_3}'$ by Corollary \ref{corollary:stochastic_dominance_composition}.\\

\noindent \textbf{Inductive Step}\\
\noindent Suppose $\cT$ is a tree with $k > 4$ leaves. As usual, the main idea is to reduce to smaller trees by looking at the subtrees of the root. We perform the induction in two steps: we first reduce to the case the two subtrees of the root are themselves balanced trees, and then we show in such a setting the worst-case is when the two subtrees have the same number of leaves. 

Let $L$ and $R$ be the two subtrees of the root of $\cT$ which are of sizes $m$ and $k-m$ respectively. By our inductive hypothesis, $X_L \leq_{st} X_{\cB_m}$ and $X_R \leq_{st} X_{\cB_{k-m}}$. Let $\cT'$ be the tree whose two subtrees are $\cB_m, \cB_{k-m}$, and whose branch lengths are all $T_{\mathrm{min}}$. Then as $X_\cT = Z_{X_L} + Z_{X_R}$ we see Corollary \ref{corollary:stochastic_dominance_composition} implies $X_{\cT'} \geq_{st} X_{\cT}$ stochastically. Hence all that is left to show is that $X_{\cB_k} \geq_{st} X_{\cT'}$. But this is exactly just the content of Lemma \ref{lem:the_more_balanced_the_better}. Hence Lemma \ref{lem:balanced_is_worse_case} follows from Lemma \ref{lem:the_more_balanced_the_better}.
\end{proof}

\noindent Now, all that is left to do is prove Lemma \ref{lem:the_more_balanced_the_better}.

\begin{proof}[Proof of Lemma \ref{lem:the_more_balanced_the_better}]

 We prove this statement by induction on $k$. Again the base case is trivial
    \[Z_1 + Z_{X_3} \leq_{st} Z_1 + Z_3 \leq_{st} Z_2 + Z_2 = Z_{X_2} + Z_{X_2}.\]
    \noindent Now suppose the statement is true for $k' < k$. Note
    
    \[X_m = Z_{X_{\lfloor m/2 \rfloor}} + Z_{X_{\lceil m/2 \rceil}}, \qquad X_{k-m} = Z_{X_{\lfloor (k-m)/2 \rfloor}} + Z_{X_{\lceil (k-m)/2 \rceil}}.\]

    \noindent By Lemma \ref{lem:xi_zxi_increasing_in_lro} we have for $1 \leq m \leq k/2$

    \[Z_{X_{\lfloor m/2 \rfloor}} \leq_{lr} Z_{X_{\lceil m/2 \rceil}} \leq_{lr} Z_{X_{\lfloor (k-m)/2 \rfloor}} \leq_{lr} Z_{X_{\lceil (k-m)/2 \rceil}}. \]
    
    \noindent Note that if $k$ is even then
    \[\lfloor m/2 \rfloor + \lceil (k-m)/2 \rceil = \lceil m/2 \rceil + \lfloor (k-m)/2 \rfloor = \frac{k}{2}.\]
    \noindent The balancing lemma \ref{lem:balancing_lemma} then implies
    \[Z_{X_m} + Z_{X_{k-m}} \leq_{st} \left(Z_{Z_{X_{\lfloor m/2 \rfloor}} + Z_{X_{\lceil (k-m)/2 \rceil}}}\right) + \left(Z_{Z_{X_{\lceil m/2 \rceil}} + Z_{X_{\lfloor (k-m)/2 \rfloor}}}\right).\]
    \noindent The inductive hypothesis applied to $k' = k/2$ shows the subscripts of each term are maximized at $m = k/2$. Hence Corollary \ref{corollary:stochastic_dominance_composition} implies
    \[ \leq_{st} \left(Z_{Z_{X_{\lfloor k/4 \rfloor}} + Z_{X_{\lceil k/4 \rceil}}}\right) + \left(Z_{Z_{X_{\lceil k/4 \rceil}} + Z_{X_{\lfloor k/4 \rfloor}}}\right) = Z_{X_{k/2}} + Z_{X_{k/2}},\]
    \noindent as desired. Likewise, if $k$ is odd then
    \[\lfloor m/2 \rfloor + \lfloor (k-m)/2 \rfloor = \frac{k-1}{2}, \quad and \quad \lceil m/2 \rceil + \lceil (k-m)/2 \rceil = \frac{k+1}{2}\]
    \noindent so we can rearrange using the balancing lemma \ref{lem:balancing_lemma} as
    \[Z_{X_m} + Z_{X_{k-m}} \leq_{st} \left(Z_{Z_{X_{\lfloor m/2 \rfloor}} + Z_{X_{\lfloor (k-m)/2 \rfloor}}}\right) + \left(Z_{Z_{X_{\lceil m/2 \rceil}} + Z_{X_{\lceil (k-m)/2 \rceil}}}\right)\]
    \noindent Applying induction with $k' = (k-1)/2$ on the left term and $k' = (k+1)/2$ on the right term we see both terms are maximized at $m = \lfloor k /2 \rfloor = (k-1)/2$. Hence
    \[\leq_{st} \left(Z_{Z_{X_{\lfloor (k-1)/4 \rfloor}} + Z_{X_{\lfloor (k+1)/4 \rfloor}}}\right) + \left(Z_{Z_{X_{\lceil (k-1)/4 \rceil}} + Z_{X_{\lceil (k+1)/4 \rceil}}}\right) = Z_{X_{(k-1)/2}} + Z_{X_{(k+1) / 2}}\]
    \noindent as desired. 
    \end{proof}

\section{Asymptotic Results}
\label{app:asymptotic-results}

\subsection{Proof of Lemma \ref{lem:asymptotics_g_k1(T)}: Asymptotics $g_{k,1}(T)$}
\label{app:coalescent-asymptotics-proof}

In this section, we develop some asymptotics for the time $g_{k,1}(T)$ to coalescence of Kingman's coalescent, specialized to the topic of interest. More properties of this coalescent time are discussed in \cite{MoehlePitters2015AbsorptionTimeTreeLength} and \cite{ChenChen2013Asymptotic}. The paper \cite{kersting2017timeabsorptionlambdacoalescents} further generalizes some of these ideas to $\Lambda$-coalescents.

 We will work in slightly more generality than this paper requires. Namely, suppose that we have positive \textbf{distinct} rates $\{\lambda_i\}_{i=1}^\infty$. Define the random variables $\{S_k\}_{k =1}^\infty$ by $S_k = \sum_{i=1}^k X_i$ for $X_i \sim \mathrm{Exp}(\lambda_i)$ independent (namely $X_i$ has density $\tau_i(x) := \lambda_i \exp(-\lambda_i x)$). Moreover, define $F_k(T) := \P(S_k \leq T)$ to be their CDFs and $f_k(T)$ their densities. If we define
\[C_{i,k} := \prod_{\substack{m=1 \\ m \neq i}}^k \frac{\lambda_m}{\lambda_m - \lambda_i}\]
\noindent then we have the following well-known explicit formulas for the CDFs and densities
\begin{equation}
    F_k(T) = 1-\sum_{i=1}^k C_{i,k} \exp(-\lambda_i T), \qquad f_k(T) = \sum_{i=1}^k C_{i,k} \lambda_i \exp(-\lambda_i T).
    \label{eqn:cdf_density_hypoexponential}
\end{equation}
\noindent See \cite{Yanev_2020}, for example. Moreover, we have the natural recursive relation
\begin{equation}
    F_k(T) = (F_{k-1} \ast \tau_k)(T) := \int_0^T F_{k-1}(x) \lambda_k \exp(-\lambda_k (T-x)) \; dx.
    \label{eqn:recursion_convolution}
\end{equation}
Since $S_k$ are monotonically increasing, we see $S_k \uparrow S_\infty := \sum_{i=1}^\infty X_i$ almost surely, where the limit may be infinite. To quantify the rate of convergence, define the partial sums $s_k  = \sum_{i=1}^k \lambda_i$, the remainder $R_k := \sum_{i=k+1}^\infty X_i$, and the remainder's expectation $r_k := \E R_k= \sum_{i=k+1}^\infty \lambda_i^{-1}$. We start by getting some explicit formulas for the derivatives of the CDF in terms of these rates $\{\lambda_i\}$:

\begin{lemma}
    If we have positive, distinct rates $\{\lambda_i\}_{i = 1}^\infty$:

    \begin{enumerate}[label=(\roman*)]
        \item At zero, the one-sided derivatives $F_k^{(m)}(0) = 0$ for $0 \leq m \leq k-1$. The nonzero derivatives take the form
        \[F_{k}^{{(k + r)}}(0) = (-1)^r \left(\prod_{i=1}^k \lambda_i\right) \left(h_r(\lambda_1,..., \lambda_k)\right),\]
        \noindent where $h_r$ are the complete homogeneous symmetric polynomials
        \[ h_r(\lambda_1, ..., \lambda_k) := \sum_{1\leq i_1\leq i_2 \leq ...\leq i_r\leq k} \prod_{j=1}^r \lambda_{i_j}.\]
        \noindent In particular, for $r=0, 1$ we have
            \[F_k^{(k)}(0) = \prod_{i=1}^k \lambda_i, \qquad F_k^{(k+1)}(0) = -\left(\sum_{i=1}^k \lambda_i\right)\left(\prod_{i=1}^k \lambda_i\right).\]
        \item The derivatives are uniformly bounded. Namely, for fixed $m$
        
        \[||F_k^{(m)}||_\infty \leq M_m = \max_{i=1, ...,m} |F_i^{(m)}(0)|.\]

    \end{enumerate}

    \label{lem:hypoexponential_derivative_properties}
\end{lemma}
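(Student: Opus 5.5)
The plan is to treat the two parts separately: part (i) by Laplace transforms, and part (ii) by pushing derivatives through a convolution factorization.

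\textbf{Part (i).} Equation~\eqref{eqn:cdf_density_hypoexponential} shows $F_k$ is a finite linear combination of exponentials. Hence $F_k$ extends to an entire function of exponential type, and its Taylor coefficients at $0$ can be read off from its Laplace transform. Since $S_k$ is a sum of independent exponentials,
\[
\int_0^\infty e^{-sT}F_k(T)\,dT=\frac1s\prod_{i=1}^k\frac{\lambda_i}{s+\lambda_i}
=\Big(\prod_{i=1}^k\lambda_i\Big)s^{-k-1}\prod_{i=1}^k\Big(1+\frac{\lambda_i}{s}\Big)^{-1}.
\]
For $s>\max_i\lambda_i$, I expand each factor geometrically and use the generating function $\prod_i(1-x\lambda_i)^{-1}=\sum_r h_r(\lambda_1,\dots,\lambda_k)x^r$ with $x=-1/s$. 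This gives
\[
\mathcal L[F_k](s)=\Big(\prod_i\lambda_i\Big)\sum_{r\ge0}(-1)^rh_r\,s^{-k-1-r}.
\]
On the other hand, $\mathcal L[T^n/n!](s)=s^{-n-1}$, and for a function of exponential type the termwise transform of its Taylor series is legitimate for large $s$. Comparing coefficients gives $F_k^{(n)}(0)=0$ for $n\le k-1$ and
\[
F_k^{(k+r)}(0)=(-1)^r\Big(\prod_i\lambda_i\Big)h_r.
\]
The cases $r=0,1$ follow from $h_0=1$ and $h_1=\sum_i\lambda_i$. As a sanity check, the same result follows by induction from the ODE $F_k'=\lambda_k(F_{k-1}-F_k)$, which is the derivative form of \eqref{eqn:recursion_convolution}.

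\textbf{Part (ii), case $k>m$.} Write $S_k=S_m+(X_{m+1}+\dots+X_k)$. Then $F_k=F_m\ast\rho$, where $\rho$ is the probability density of $X_{m+1}+\dots+X_k$. Extend $F_m$ by zero to $T<0$. By part (i), $F_m^{(j)}(0)=0$ for $j\le m-1$, so the extension is $C^{m-1}$ and $F_m^{(m-1)}$ is Lipschitz. Differentiating under the convolution $m$ times therefore produces no boundary terms, and
\[
F_k^{(m)}=F_m^{(m)}\ast\rho .
\]
Since $\rho$ is a probability density, this gives $\|F_k^{(m)}\|_\infty\le\|F_m^{(m)}\|_\infty$.

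\textbf{Part (ii), case $k\le m$.} It remains to show $\|F_i^{(m)}\|_\infty=|F_i^{(m)}(0)|$ for each $i\le m$, i.e.\ that the supremum sits at the origin. Here $F_i^{(m)}=f_i^{(m-1)}$ is an explicit combination $\sum_j C_{j,i}\lambda_j(-\lambda_j)^{m-1}e^{-\lambda_jT}$. For $i=1$ this is a single exponential, so the claim is clear.

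For the induction I would first try the relation $f_i=\tau_i\ast f_{i-1}$, written as
\[
f_i^{(n)}=\lambda_i\big(f_{i-1}^{(n-1)}-f_i^{(n-1)}\big).
\]
The aim is to show that the functions $(-1)^n f_i^{(n)}$ for $n\ge i$ are positive combinations of decreasing functions, i.e.\ completely monotone, so that their absolute values are maximized at $T=0$. I expect this to be the main obstacle: for finitely many distinct rates the combination coefficients $C_{j,i}$ alternate in sign, so monotonicity is not automatic.

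If complete monotonicity fails, the fallback is a weaker statement. One still gets uniform boundedness in $k$ with constant $\max_{i\le m}\|F_i^{(m)}\|_\infty$, which is finite because each $F_i^{(m)}$ is a finite sum of decaying exponentials. This weaker uniform bound is all that the later asymptotic arguments actually need.
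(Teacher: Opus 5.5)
\textbf{Where the gap is.} Part (i) is correct, by a different route from the paper. The paper gets the vanishing from a cited identity for $\sum_i C_{i,k}\lambda_i^m$, and the nonzero derivatives by iterating the convolution rule on $F_k=F_{k-1}\ast\tau_k$. Your generating-function argument is shorter and would not even need distinct rates. Your case $k>m$ of (ii) is also fine; it is the paper's Young-inequality step done in one go.

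The genuine gap is the case $k\le m$ of (ii), which is exactly where the stated constant $M_m=\max_{i\le m}|F_i^{(m)}(0)|$ has to come from. Here the complete-monotonicity plan is not merely non-automatic; it is impossible. For $i\ge 2$ some $C_{j,i}$ are negative, so $(-1)^nf_i^{(n)}(T)=\sum_jC_{j,i}\lambda_j^{n+1}e^{-\lambda_jT}$ is the Laplace transform of a signed measure with negative atoms. By uniqueness in Bernstein's theorem it therefore cannot be completely monotone. Concretely, $(-1)^nf_2^{(n)}(0)=-\lambda_1\lambda_2h_{n-1}(\lambda_1,\lambda_2)<0$, while $(-1)^nf_2^{(n)}(T)>0$ for large $T$. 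Your intermediate target $\|F_i^{(m)}\|_\infty=|F_i^{(m)}(0)|$ is also stronger than the lemma requires. Your fallback constant $\max_{i\le m}\|F_i^{(m)}\|_\infty$ is only the paper's preliminary observation. It does not give the lemma as stated, even if finiteness is all the later $o(r_k)$ conclusion uses.

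\textbf{How to close it.} The missing step uses the ODE you already wrote down. Differentiate $F_k'=\lambda_k(F_{k-1}-F_k)$ $m$ times and solve the resulting linear equation with an integrating factor:
\[
F_k^{(m)}(T)=F_k^{(m)}(0)\,e^{-\lambda_kT}+\int_0^TF_{k-1}^{(m)}(x)\,\lambda_ke^{-\lambda_k(T-x)}\,dx .
\]
Hence $|F_k^{(m)}(T)|\le e^{-\lambda_kT}|F_k^{(m)}(0)|+(1-e^{-\lambda_kT})\|F_{k-1}^{(m)}\|_\infty$, which is a convex combination. This gives $\|F_k^{(m)}\|_\infty\le\max\{\|F_{k-1}^{(m)}\|_\infty,|F_k^{(m)}(0)|\}$. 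Induction from $F_1^{(m)}(T)=(-1)^{m-1}\lambda_1^me^{-\lambda_1T}$ then yields $\|F_k^{(m)}\|_\infty\le\max_{i\le k}|F_i^{(m)}(0)|\le M_m$ for every $k$, using $F_i^{(m)}(0)=0$ for $i>m$ from part (i). This one argument handles both of your cases at once, and it is the paper's proof.
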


\begin{proof}

\noindent (i) It is easy to see that for $m \geq 1$
\[F_k^{(m)}(0) = (-1)^{m-1}\sum_{i=1}^k C_{i,k} \lambda_i^m\]
which vanishes by Lemma 2 of \cite{Yanev_2020} when $m \leq k-1$. To calculate the first nonzero derivatives, we will rely on the recurrence in Equation \ref{eqn:recursion_convolution}. Note that given two smooth functions $g,h : [0, \infty) \to \R$ and their convolution
\[g \ast h(T) := \int_0^T g(T-t) h(t) \; dt = \int_0^T g(t) h(T-t) \; dt.\]
\noindent Hence, by Leibniz's integral rule we have
\begin{equation}
    (g \ast h)'(T) = (g' \ast h)(T) + g(0)h(T) = (g \ast h')(T) + g(T)h(0).
    \label{eqn:derivatives_of_convolutions}
\end{equation}
\noindent By iteratively applying this to $F_k = F_{k-1} \ast \tau_k$, we see
\begin{equation}
F_k^{(m)}(T) = (F_{k-1}^{(m)} \ast \tau_k)(T) + \sum_{j=0}^{m-k} F_{k-1}^{(k-1+j)}(0) \tau^{(m-k-j)}_k(T),\\
\label{eqn:convolution_formula_derivatives}
\end{equation}
\noindent since by $(ii)$ the first $k-2$ derivatives of $F_{k-1}$ vanish. In particular, if $k > m$ we see $F_k^{(m)} = F^{(m)}_{k-1} \ast \tau_k$. Using this, we can prove $F_{k}^{(k+m)}(0)$ has the desired form by induction. Note $F_1(T) = 1-\exp(-\lambda_1 T)$ so $F_1^{(m+1)}(0) = (-1)^{m}\lambda_1 \cdot \lambda_1^m$ as desired. Moreover, $\tau_k^{(r)}(0) = (-1)^r \lambda_k^{r+1}$. Thus by induction and equation \eqref{eqn:convolution_formula_derivatives}

\[F_{k}^{(k+m)}(0) = \sum_{j=0}^m F_{k-1}^{(k-1 + j)}(0)\,\tau^{(m-j)}_{k}(0) = (-1)^m \left(\prod_{i=1}^k \lambda_i\right) \sum_{j=0}^m \lambda_{k}^{m-j} a_{k-1, j} \]
\noindent where $a_{k, m} := h_m(\lambda_1,..., \lambda_k)$ are our symmetric polynomials. Hence it suffices to show these polynomials satisfy the recurrence
\[a_{k,m} = \sum_{j=0}^m \lambda_{k}^{m-j} a_{k-1, j}.\]
\noindent But this is almost immediate, just by grouping terms according to the number of $i_\ell$ that equal $k$
\[
a_{k,m}
= \sum_{1\leq i_1\leq i_2\leq\cdots\leq i_m\leq k}
    \prod_{\ell=1}^m \lambda_{i_\ell}
= \sum_{j=0}^m \lambda_k^{m-j}
    \sum_{1\leq i_1\leq i_2\leq\cdots\leq i_j\leq k-1}
    \prod_{\ell=1}^j \lambda_{i_\ell}
= \sum_{j=0}^m \lambda_k^{m-j} a_{k-1,j},
\]
where the inner sum and product for $j=0$ are understood to equal one.
\noindent Hence
    \[F_{k}^{(k+m)}(0) = (-1)^m a_{k,m} \prod_{i=1}^k \lambda_i\]
\noindent takes the desired form. \\

\noindent (ii) Since we saw in the previous section that for $k > m$ we have $F_k^{(m)} = F_{k-1}^{(m)} \ast \tau_k$, by iteratively applying Young's convolution inequality we get
\[ ||F_{k}^{(m)}||_\infty \leq ||F_{k-1}^{(m)}||_\infty \cdot ||\tau_k||_1 = ||F_{k-1}^{(m)}||_\infty  \Longrightarrow  ||F_{k}^{(m)}||_\infty \leq ||F_m^{(m)}||_\infty, \quad \forall k \geq m.\]
\noindent Hence we can see $||F_k^{(m)}||_\infty \leq \max_{i=1,...,m} ||F_i^{(m)}||_\infty =: M_m < \infty$. 

To quantify $M_m$, note that using the expression for the derivatives of $\tau_k$, we can see equation \eqref{eqn:convolution_formula_derivatives} can be re-expressed as
\[F_k^{(m)}(T) = F_{k-1}^{(m)} \ast \tau_k(T) + F_{k}^{(m)}(0) \exp(-\lambda_k T).\]
\noindent Then Young's convolution inequality implies
\[|F_k^{(m)}(T)| \leq ||F_{k-1}^{(m)}||_\infty \cdot (1-\exp(-\lambda_k T)) + |F_{k}^{(m)}(0)| \exp(-\lambda_k T) \leq \max \{||F_{k-1}^{(m)}||_\infty, |F_{k}^{(m)}(0)|\}.\]
for any $T \geq 0$. Taking the supremum over $T$ yields
    \[||F_k^{(m)}||_\infty \leq \max \{||F_{k-1}^{(m)}||_\infty, |F_{k}^{(m)}(0)|\}.\]
\noindent By induction we see $M_m = \max_{i=1,..., m}|F_i^{(m)}(0)|$ as desired. \\
\end{proof}

\subsubsection{CDF Asymptotics}
\label{app:cdf-asymptotics}
\noindent Now that we have these bounds, the main results follow. We in fact show something slightly stronger: that the relative error in the Taylor series approximation is negligible in the $T = o((s_k/k)^{-1})$ regime no matter where we truncate it. 

\begin{theorem}
    If $m = \arg \min_{i=1,...,k} \lambda_i$, then the CDFs have asymptotics
        \[\begin{cases}
            1-F_k(T)  \sim   C_{m,k}\exp(-\lambda_m T), & T \uparrow \infty \text{ and k fixed,}\\
            F_k(T)  \sim \frac{\prod_{i=1}^k \lambda_i}{k!} T^{k}, &  T = o((s_{k}/k)^{-1}),\\
        \end{cases}\]
        \noindent where $T = o((s_k/k)^{-1})$ denotes any sequence of pairs $(T_n, k_n)$ so that $\lim_{n \to \infty} s_{k_n}T_n/k_n = 0$. More generally, in the same $T = o((s_k/k)^{-1})$ regime and for any finite $m \geq 0$
        \[\frac{F_k(T) - \sum_{r=0}^{m} a_{r,k}}{a_{m, k}} \to 0, \qquad a_{r,k} := (-1)^r \frac{h_{r}(\lambda_1,..., \lambda_k)\prod_{i=1}^k \lambda_i }{(k+r)!} T^{k+r}.\]
\end{theorem}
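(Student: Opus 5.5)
The large-$T$ case follows directly from the explicit formula \eqref{eqn:cdf_density_hypoexponential}. For fixed $k$ we have $1-F_k(T)=\sum_{i=1}^k C_{i,k}e^{-\lambda_i T}$. Since the rates are distinct, the index $m$ minimizing $\lambda_i$ is unique, and $C_{m,k}=\prod_{j\neq m}\lambda_j/(\lambda_j-\lambda_m)>0$. Every other term is therefore $O(e^{-\lambda_i T})=o(e^{-\lambda_m T})$. Dividing by $C_{m,k}e^{-\lambda_m T}$ and letting $T\to\infty$ gives the first asymptotic.

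For the small-$T$ regime I will prove the stronger truncated-Taylor statement; the case $m=0$ is the second asymptotic. By Lemma~\ref{lem:hypoexponential_derivative_properties}(i), $F_k^{(j)}(0)=0$ for $j<k$ and $F_k^{(k+r)}(0)/(k+r)!\cdot T^{k+r}=a_{r,k}$. Taylor's theorem with Lagrange remainder at order $k+m+1$ then gives
\[
F_k(T)-\sum_{r=0}^m a_{r,k}=\frac{F_k^{(k+m+1)}(\xi)}{(k+m+1)!}T^{k+m+1},\qquad \xi\in(0,T).
\]
The problem is that Lemma~\ref{lem:hypoexponential_derivative_properties}(ii) only bounds $\|F_k^{(j)}\|_\infty$ by a constant depending on $j$, and here $j=k+m+1$ grows with $k$. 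That bound is too crude to compare with $a_{m,k}$.

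I will instead use the probabilistic representation $F_k(T)=\P(S_k\le T)$ and a sandwich argument. The density of $S_k$ is the convolution of the $\tau_i$, so
\[
F_k(T)=\Big(\prod_i\lambda_i\Big)\int_{\{x_i\ge0,\ \sum x_i\le T\}}e^{-\sum\lambda_i x_i}\,dx.
\]
Expanding $e^{-\sum \lambda_i x_i}$ as an alternating power series gives exactly the terms $a_{r,k}$. (Equivalently, integrating the monomials of $(\sum\lambda_ix_i)^r$ over the simplex yields $h_r\,T^{k+r}\,r!/(k+r)!$ by the Dirichlet integral.) Because the exponential series is alternating, truncating after order $m$ gives an error bounded by the next term. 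This yields
\[
\Big|F_k(T)-\sum_{r=0}^m a_{r,k}\Big|\le |a_{m+1,k}|.
\]
It remains to show $|a_{m+1,k}/a_{m,k}|=\frac{h_{m+1}}{h_m}\cdot\frac{T}{k+m+1}\to0$. I will bound $h_{m+1}\le h_m\cdot s_k$: every multiset of size $m+1$ is obtained by adding one index to a multiset of size $m$, so each monomial of $h_{m+1}$ appears in $h_m\cdot s_k$ with coefficient at least one. Hence the ratio is at most $s_kT/(k+m+1)\le s_kT/k\to0$ in the stated regime, which completes the argument.

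The main obstacle is making the alternating-series truncation bound rigorous, since the terms are integrals of $(\sum\lambda_ix_i)^r$ rather than plain numbers. Pointwise, $e^{-y}$ lies between consecutive partial sums of its Taylor series for every $y\ge0$. Integrating that pointwise inequality against the nonnegative measure on the simplex preserves it, which gives the bound above.
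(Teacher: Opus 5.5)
Your proposal is correct. The large-$T$ part matches the paper's argument, and you are slightly more careful: you note that distinctness makes the minimizer unique and that $C_{m,k}>0$, which the relation $\sim$ requires.

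For the small-$T$ regime you take a genuinely different route.

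\emph{The paper's route.} Since $F_k$ is a finite sum of exponentials, it equals its Taylor series at $0$ for all $T$. Using the coefficients from Lemma~\ref{lem:hypoexponential_derivative_properties}(i), the paper writes the normalized error as the full tail $\sum_{r\ge1}(-1)^r\frac{h_{m+r}}{h_m}\frac{T^r}{(k+m+r)_r}$. It then dominates this tail by the geometric series $\sum_{r\ge1}(s_kT/k)^r$. That step requires $s_kT/k<1$ and gives the bound $\frac{s_kT/k}{1-s_kT/k}$.

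\emph{Your route.} You integrate a pointwise remainder bound for $e^{-y}$ over the simplex. This gives the non-asymptotic sandwich $\bigl|F_k(T)-\sum_{r\le m}a_{r,k}\bigr|\le|a_{m+1,k}|$, valid for every $T\ge0$. You then need only the single ratio estimate $h_{m+1}/h_m\le s_k$, which is the same combinatorial estimate the paper iterates. The resulting bound $s_kT/(k+m+1)$ is slightly sharper than the paper's.

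\emph{What each buys.} Your Dirichlet-integral computation recovers the coefficients $a_{r,k}$ directly. So this part of your argument does not depend on Lemma~\ref{lem:hypoexponential_derivative_properties}(i), on analyticity, or on distinctness of the rates. The paper's route instead reuses the derivative-at-zero machinery it had already built for other purposes.

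One small point of justification. The fact that $e^{-y}$ lies between consecutive partial sums for all $y\ge0$ follows from the Lagrange remainder
\[
e^{-y}-\sum_{r\le m}\frac{(-y)^r}{r!}=\frac{(-1)^{m+1}e^{-\xi}y^{m+1}}{(m+1)!},\qquad \xi\in(0,y).
\]
It does not follow from the alternating series test, whose monotone-terms hypothesis fails once $y>1$.
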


\begin{proof}
 For $k$ fixed, the exponential in equation \ref{eqn:cdf_density_hypoexponential} with the smallest rate decays the slowest, so $1-F_k(T) \sim C_{m,k}\exp(-\lambda_m T)$ as $T \to \infty$. For the other asymptotics, first fix $k$ and let $h_{r,k} := h_r(\lambda_1, ..., \lambda_k)$ be our symmetric polynomials and $p_k = \prod_{i=1}^k \lambda_i$. Since $F_k(T)$ is the sum of a finite number of exponentials, its Taylor series at zero converges to $F_k(T)$ for all $T$. Moreover, (i) of Lemma \ref{lem:hypoexponential_derivative_properties} above tells us $F_k^{(k+r)}(0) = (-1)^r p_k h_{r,k}$. Thus, let
\[a_{r,k} = (-1)^r \frac{p_k h_{r,k}}{(k+r)!} T^{k+r}\]
\noindent denote the $(r+1)^{th}$ nonzero term in the Taylor series expansion about zero. Suppose that we approximate $F_k(T)$ by the first $(m+1)$ nonzero terms of its Taylor series about zero. We can measure the approximation error relative to the last term $a_{m,k}$ to see
\[\frac{F_k(T) - \sum_{r=0}^{m} a_{r,k}}{a_{m, k}} = \sum_{r=1}^\infty (-1)^r\frac{h_{m+r}/h_mT^r}{(k+m+r)_r}\]

\noindent where $(x)_r = x(x-1) \cdot ... \cdot (x-r+1)$ denotes the falling factorial. We will now bound the right hand side by a geometric series.

Note that trivially $h_{r+1} \leq h_r s_k$ as the former includes all monomials of degree $(r+1)$ in $\{\lambda_i\}_{i=1}^k$ with coefficient one, and the latter has all the monomials with coefficient at least one. Hence $h_{m+r} / h_m = \prod_{i=0}^{r-1} h_{m+i+1}/h_{m+i} \leq s_k^r$. Using this and the simple bound $(k + m + r)_r \geq k^r$ we get

\[\bigg|\sum_{r=1}^\infty (-1)^r\frac{h_{m+r}/h_mT^r}{(k+m+r)_r}\bigg| \leq \sum_{r=1}^\infty \left(\frac{s_kT}{k}\right)^r = \frac{s_kT/k}{1 - s_k T/k}\]

\noindent for $|s_kT/k| < 1$. When $T = o((s_k/k)^{-1})$, this tends to zero.  Thus in this regime
\[\frac{F_k(T) - \sum_{r=0}^{m} a_{r,k}}{a_{m, k}} \to 0,\]
 \noindent as desired. In particular
\[F_k(T) = (1 + o(1)) \frac{\prod_{i=1}^k \lambda_i}{k!} T^{k}.\]
\end{proof}

\subsubsection{Gap Asymptotics}
\label{app:gap-asymptotics}

\noindent The following statement gives us a sense of how fast our coalescent probabilities $g_{k,1}(T)$ will saturate as $k \to \infty$.  

\begin{theorem}[Gap Asymptotics] If $\sum_{i=1}^\infty \lambda_i^{-1} < \infty$, then $F_k \to F_\infty$ uniformly. Quantitatively, the gap $\Delta_k(T) = F_{k}(T) - F_\infty(T)$ has asymptotics
        \[\Delta_k = (f_\infty + o(1))r_k.\] 
\end{theorem}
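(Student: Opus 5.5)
We want to show that $\Delta_k(T)=F_k(T)-F_\infty(T)=(f_\infty(T)+o(1))\,r_k$, uniformly in $T$, and along the way that $F_k\to F_\infty$ uniformly. Throughout we write $S_\infty=S_k+R_k$, where $R_k=\sum_{i>k}X_i$ is independent of $S_k$.

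\textbf{Uniform convergence.} Since $\E R_k=r_k<\infty$, we have $R_k<\infty$ almost surely and $S_\infty$ is a genuine random variable. The events $\{S_k\le T\}$ and $\{S_\infty\le T\}$ differ only on $\{T-R_k<S_k\le T\}$. Hence
\[0\le \Delta_k(T)=\P(T-R_k<S_k\le T)\le \E\bigl[\min(1,\,\|f_k\|_\infty R_k)\bigr].\]
Lemma~\ref{lem:hypoexponential_derivative_properties}(ii) with $m=1$ gives $\|f_k\|_\infty\le M_1$ uniformly in $k$. Therefore $\sup_T\Delta_k(T)\le M_1 r_k\to0$, which is the uniform convergence.

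\textbf{First-order expansion.} Conditioning on $R_k$ gives the exact identity
\[\Delta_k(T)=\E\bigl[F_k(T)-F_k(T-R_k)\bigr]=\E\int_{T-R_k}^{T} f_k(x)\,dx .\]
Taylor's theorem on the density gives $f_k(x)=f_k(T)+O(\|f_k'\|_\infty\,|T-x|)$. By Lemma~\ref{lem:hypoexponential_derivative_properties}(ii) with $m=2$ we have $\|f_k'\|_\infty=\|F_k''\|_\infty\le M_2$, again uniformly in $k$. Substituting,
\[\Delta_k(T)=f_k(T)\,r_k+O\bigl(M_2\,\E R_k^2\bigr),\]
with the convention $f_k=0$ on $(-\infty,0)$. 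This convention is harmless because $F_k$ is $C^1$ for $k\ge2$, since $f_k(0)=0$.

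\textbf{Remaining two ingredients.} It then suffices to show two things.

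First, $f_k(T)\to f_\infty(T)$. I would get this from the convolution structure. We have $f_\infty=f_k\ast(\text{law of }R_k)$, so
\[|f_\infty(T)-f_k(T)|\le \E|f_k(T-R_k)-f_k(T)|\le M_2\, r_k\to0.\]
This also shows that $f_\infty$ exists and is bounded and Lipschitz.

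Second, $\E R_k^2=o(r_k)$. Here $\E R_k^2=\operatorname{Var}R_k+r_k^2=\sum_{i>k}\lambda_i^{-2}+r_k^2$. Since $r_k\to0$ we have $r_k^2=o(r_k)$. For the variance term, $\sum_{i>k}\lambda_i^{-2}\le\bigl(\sup_{i>k}\lambda_i^{-1}\bigr)\,r_k$, and $\sup_{i>k}\lambda_i^{-1}\to0$ because the series $\sum\lambda_i^{-1}$ converges, so its terms tend to zero.

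Combining the expansion with these two facts gives $\Delta_k=(f_\infty+o(1))\,r_k$, with the $o(1)$ uniform in $T$.

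\textbf{Main obstacle.} The delicate point is controlling the remainder uniformly in $k$. That is exactly what the uniform derivative bounds of Lemma~\ref{lem:hypoexponential_derivative_properties}(ii) supply. One must check that $M_1$ and $M_2$ are finite constants independent of $k$: they are maxima of $|F_i^{(m)}(0)|$ over $i\le m$, which indeed do not depend on $k$.

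For Kingman's coalescent, $\lambda_i=\binom{i+1}{2}$ (indices shifted), so $r_k=\sum_{i>k}\frac{2}{i(i+1)}=\frac{2}{k+1}$. This gives the rate $2f_{S_\infty}(T)/k$ claimed in Lemma~\ref{lem:asymptotics_g_k1(T)}, up to the index shift $k\mapsto k-1$, which does not affect the leading order.
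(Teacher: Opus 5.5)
Your proposal is correct, and its skeleton is the paper's: write $F_\infty(T)=\E F_k(T-R_k)$, expand to first order about $T$ using the $k$-uniform bounds $\|f_k\|_\infty\le M_1$ and $\|f_k'\|_\infty\le M_2$ from Lemma~\ref{lem:hypoexponential_derivative_properties}(ii), then replace $f_k(T)$ by $f_\infty(T)$ and control $\E R_k^2$. The differences are in the supporting steps.

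For uniform convergence, the paper cites Dini's theorem, whereas you give the explicit bound $\sup_T\Delta_k(T)\le M_1r_k$. For $f_k\to f_\infty$, the paper notes that $\{f_k\}$ is uniformly bounded and equicontinuous and invokes Boos's converse to Scheff\'e's lemma. You instead combine the convolution identity $f_\infty(T)=\E f_k(T-R_k)$ with the Lipschitz bound on the zero-extended $f_k$. This gives the explicit rate $\|f_k-f_\infty\|_\infty\le M_2r_k$, and it also exhibits $f_\infty$ as a bounded Lipschitz density.

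Your route is therefore more elementary, since it needs no external convergence theorem, and more quantitative. You should also use the trivial bound $\sum_{i>k}\lambda_i^{-2}\le r_k^2$, as the paper does to get $\E R_k^2\le 2r_k^2$; this is sharper than your $o(r_k)$. With it, your two steps combine to $\|\Delta_k-f_\infty r_k\|_\infty=O(r_k^2)$. The Kingman gap in Lemma~\ref{lem:asymptotics_g_k1(T)} then becomes $2f_{S_\infty}(T)/k+O(k^{-2})$, whereas the paper's argument only yields an unquantified $o(r_k)$.

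Finally, writing the increment as $\E\int_{T-R_k}^T f_k(x)\,dx$ handles the case $T-R_k<0$ cleanly, using only $f_k(0)=0$. The paper's second-order Taylor remainder instead tacitly needs $F_k$ to be twice differentiable across $0$, which holds only for $k\ge 3$.
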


  \begin{proof}
          Dini's theorem already guarantees that $F_k \to F_\infty$ uniformly. Here, we will get quantitative control on the rate of convergence. Note that (ii) of Lemma \ref{lem:hypoexponential_derivative_properties} implies the functions $f_k$ are uniformly bounded by $M_1$ and uniformly $M_2$-Lipschitz. Thus they form a uniformly equicontinuous family which is uniformly bounded.  Since $S_k \Rightarrow S_\infty$, as a consequence of \cite{Boos1985ConverseScheffe} converse to Scheffe's lemma, we see $f_{k} \to f_{\infty}$ uniformly on $[0, \infty)$. Now, note that by conditioning on the tail $R_k$, as $S_k \indep R_k$
        \[\P(S_\infty \leq T) = \int_0^\infty \P(S_k \leq T-r) \P(R_k \in dr) = \E F_k(T-R_k).\]
        \noindent Hence by Taylor expanding the CDF $F_{k}$ we get
        \[\Delta = \E_{ R_k} [F_{k}(T) - F_{k}(T-R_k)] = f_{k}(T) \cdot \E R_k + \frac{1}{2}\E \left(f'_{k}(\xi(R_k)) R_k^2\right),\]
        \noindent where $\xi(r) \in (T-r, T)$ for every $r$. Note that $\mathrm{Var}(R_k) = \sum_{i=k+1}^\infty\lambda_i^{-2}\leq r_k^2$, implying $\E R_k^2 \leq 2r_k^2$. Since $||f'_{k}||_\infty \leq M_2$ is uniformly bounded by (iii) and as $f_{k}(T) \to f_{\infty}(T)$ uniformly as $k \to \infty$
        \[\Delta_k(T) = f_\infty(T) r_k + (f_k(T) - f_\infty(T))r_k + O(r_k^2) = f_\infty(T) r_k + o(r_k).\]
        \noindent More specifically, since $M_2 = \max \{|F_1''(0)|, |F_2''(0)|\} =  \lambda_1\max(\lambda_1, \lambda_2)$
        \[||\Delta_k(T) - f_\infty r_k||_\infty \leq ||f_k - f_\infty||_\infty r_k + \lambda_1\max(\lambda_1, \lambda_2) r_k^2 = o(r_k)\]
        \noindent so the rate of convergence is uniform in $T$.
  \end{proof}

  Lemma \ref{lem:asymptotics_g_k1(T)} then just follows from the special case of Kingman's coalescent, where we have rates $\tilde{\lambda}_i = {i+1 \choose 2}$. Note the indexing is slightly different here than in the main paper: it is shifted down by one, but otherwise the same. Here we have
\[r_k = \sum_{i=k+1}^\infty \frac{1}{{i +1 \choose 2}} = \frac{2}{k+1}, \qquad s_k = \sum_{i=1}^k {i + 1 \choose 2} =  \frac{k(k+1)(k+2)}{6} = \Theta(k^3).\]
\noindent Moreover, a simple induction shows that $C_{1,k} = 3k/(k+2)$. Hence we get asymptotics

    \[F_k(T) \sim \begin{cases}
        1-\frac{3k}{k+2}\exp(-T), & T \uparrow \infty  \text{ and k fixed,}\\
        \frac{(k+1)!}{2^k} T^{k}, &  T = o(k^{-2}).\\
    \end{cases} \qquad \Delta_k(T) = (f_\infty(T) + o(1)) \cdot \frac{2}{k+1}\]

\subsubsection{Improvement Ratio Asymptotics}
\label{app:improvement-ratio-asymptotics}

\noindent In Section \ref{sec:estimated_growth_rates} above, we obtained the following asymptotic lower bound on the improvement factor of our balanced bound (Theorem \ref{thm:improved_bipartition_cover_bound_balanced}) over the original bound:
\[\beta_T := \frac{-\log(1-g_{u(T),1}(T))}{-\log(1-s(T))} \]
\noindent In this section, we develop small $T$ asymptotics for $\beta_T$. In particular:
\begin{lemma}[Small $T$ Improvements]
    As $T \downarrow 0$, we have
    \[\beta_T \sim \frac{\pi^2}{2T} \]
    \label{lem:improvement_ratio_asymptotics}
\end{lemma}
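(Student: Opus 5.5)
The plan is to reduce $\beta_T$ to a ratio of two small probabilities, and then compute the small-$T$ behaviour of each through the theta-function structure of the Kingman absorption time. The steps are: linearize the logarithms, show the ratio is driven by $f_{S_\infty}/s$ near $0$, and compute that ratio by a Jacobi transformation.

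\emph{Step 1: linearize.} As $T\downarrow 0$ we have $s(T)\to 0$. Also $u(T)=\frac{2-e^{-T/2}}{1-e^{-T/2}}=\frac{2}{T}+O(1)\to\infty$. By Corollary~\ref{corr:g_i1_is_decreasing} and Lemma~\ref{lem:asymptotics_g_k1(T)}, $s(T)\le g_{u(T),1}(T)\le g_{2/T-C,1}(T)$. I expect this upper bound to tend to $0$ as well (it should be checked once Step 3 is done). Granting that, $-\log(1-x)\sim x$ gives
\[
\beta_T\sim \frac{g_{u(T),1}(T)}{s(T)} = 1+\frac{\Delta_{u(T)}(T)}{s(T)}.
\]
Here non-integer $u$ is interpreted by monotone interpolation between neighbouring integers, which is harmless at leading order.

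\emph{Step 2: the gap.} Heuristically, the Gap Asymptotics theorem gives $\Delta_k(T)\approx 2f_{S_\infty}(T)/k$. With $k=u(T)\approx 2/T$ this yields
\[
\beta_T\approx 1+\frac{T f_{S_\infty}(T)}{s(T)}.
\]
The claim is therefore equivalent to
\[
\frac{f_{S_\infty}(T)}{F_\infty(T)}\sim\frac{\pi^2}{2T^2}, \qquad F_\infty := s .
\]
This is consistent with the de Bruijn/Tauberian heuristic. Indeed,
\[
-\log \E e^{-\lambda S_\infty}=\sum_{i\ge 2}\log\Bigl(1+\lambda/\tbinom{i}{2}\Bigr)\sim \pi\sqrt{2\lambda},
\]
which predicts $\log s(T)\sim -\pi^2/(2T)$.

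The obstacle is that the remainder in the Gap theorem is only $o(r_k)$ uniformly in $T$. It is not $o(f_\infty(T)\,r_k)$, and $f_\infty(T)$ is exponentially small here. So I will not use that theorem directly. Instead I will use the explicit series.

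\emph{Step 3: explicit series.} From \eqref{eqn:def_of_g_ij} with $j=1$,
\[
g_{k,1}(T)=\sum_{m=1}^{k}(-1)^{m-1}(2m-1)\,\rho_{k,m}\,e^{-\binom m2 T},\qquad \rho_{k,m}=\frac{k_{[m]}}{k_{(m)}} .
\]
Letting $k\to\infty$ gives $s(T)=\sum_{m\ge1}(-1)^{m-1}(2m-1)e^{-\binom m2T}$, which is a derivative of a Jacobi theta function. Writing $\binom m2=\frac{(2m-1)^2-1}{8}$, Poisson summation (the Jacobi transformation) should give
\[
s(T)=c\,T^{-3/2}e^{-\pi^2/(2T)}\bigl(1+O(T)\bigr).
\]
Differentiating the transformed series term by term then gives $f_{S_\infty}=s'$ with $f/s\sim \pi^2/(2T^2)$ rigorously.

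For the gap, I write $\rho_{k,m}-1=-m(m-1)/k+O(m^4/k^2)$. More robustly, I use the exact product $\rho_{k,m}=\prod_{i<m}\frac{k-i}{k+i}\approx e^{-m^2/k}$. With $k\approx 2/T$ this Gaussian factor is $e^{-m^2T/2}$, so $g_{u,1}(T)$ becomes a theta series with effective parameter about $2T$ and the same structure. Applying the Jacobi transform to it yields $g_{u,1}/s\sim \pi^2/(2T)$, and this also justifies $g_{u,1}\to 0$ in Step 1.

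I expect this last step to be the main obstacle: controlling the non-Gaussian corrections in $\rho_{k,m}$ within the Poisson-summed expression, given the exponentially small main terms. I would first try to bound $\bigl|\rho_{k,m}-e^{-m(m-1)/k}\bigr|$ uniformly and push that error through the transformation.
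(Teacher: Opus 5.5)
\emph{Gap, and the statement itself appears to be false.}

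Your Step 2 diagnosis is correct, and it applies verbatim to the paper's own proof. The paper obtains $\Delta(T)/s(T)\sim 2f_{S_\infty}(T)/\bigl(u(T)\,s(T)\bigr)\sim T\,r(T)$ precisely by substituting $k=u(T)\approx 2/T$ into the fixed-$T$ gap asymptotics. That expansion has error only $o(r_k)$ in absolute terms, while $f_{S_\infty}(T)$ is of size $e^{-\pi^2/(2T)}$, so the substitution is not justified. Your treatment of $s$ itself is fine. The Jacobi transformation gives $s(T)=e^{T/8}(2\pi/T)^{3/2}e^{-\pi^2/(2T)}(1+o(1))$ and $f_{S_\infty}/s\sim\pi^2/(2T^2)$, which is a sharper substitute for the paper's Tauberian-plus-log-concavity argument. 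The worry in Step 1 is unnecessary, since $g_{u,1}(T)\le g_{2,1}(T)=1-e^{-T}$.

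The gap is the final sentence of Step 3. It cannot be filled, because $\beta_T$ grows exponentially in $1/T$. Your own heuristic already shows this: with $\rho_{k,m}\approx e^{-m(m-1)/k}$ and $k\approx 2/T$, the series becomes $\sum_{m\ge1}(-1)^{m-1}(2m-1)e^{-m(m-1)T}=s(2T)$. The same Jacobi transformation then gives $s(2T)/s(T)\approx 2^{-3/2}e^{\pi^2/(4T)}$, exponentially large rather than $\pi^2/(2T)$. The Gaussian surrogate is not quantitatively reliable here, because after Poisson summation the dominant region is $|m|\asymp 1/T\asymp k$. The qualitative conclusion is nonetheless robust, as the following direct bound shows. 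For every $k\ge 2$, with $\tau_i\sim\mathrm{Exp}\bigl(\binom{i}{2}\bigr)$ independent,
\[
g_{k,1}(T)\;\ge\;\prod_{i=2}^{k}\P\Bigl(\tau_i\le\frac{T}{k-1}\Bigr)=\prod_{i=2}^{k}\Bigl(1-e^{-\binom{i}{2} T/(k-1)}\Bigr).
\]
Take $k=\lceil u(T)\rceil=2/T+O(1)$ and use monotonicity in $k$. A Riemann-sum (Stirling) estimate then gives
\[
\log g_{u(T),1}(T)\;\ge\;-\frac{C+o(1)}{T},\qquad C=-\int_0^2\log\bigl(1-e^{-z^2/4}\bigr)\,dz\approx 4.32<\frac{\pi^2}{2}\approx 4.93 .
\]
Combined with $\log s(T)\sim-\pi^2/(2T)$, this yields $\log\beta_T\ge(\pi^2/2-C+o(1))/T$ with $\pi^2/2-C\approx 0.62$. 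Hence $T\beta_T\to\infty$, and $\beta_T\sim\pi^2/(2T)$ fails. A Chernoff/saddle-point computation suggests in fact $\log\beta_T\sim\kappa/T$ with $\kappa=\pi^2/2-2y^2-2\log(1+y^2)\approx 2.35$, where $y\tan y=1$. What can be proved is only the one-sided statement that $\beta_T$ eventually exceeds $\pi^2/(2T)$, indeed grows exponentially in $1/T$. That one-sided bound is all the paper's ``improvement factor is bounded below'' claim actually needs.
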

 \begin{proof}[Proof of Lemma \ref{lem:improvement_ratio_asymptotics}]
     
  Since $-\log(1-x) \sim x$ as $x \downarrow 0$
\[\beta_T \sim 1 + \frac{\Delta(T)}{s(T)}, \qquad \Delta(T) := g_{u(T), 1}(T) - s(T). \]
\noindent Lemma \ref{lem:asymptotics_g_k1(T)} and Corollary \ref{corr:upperbound_lineages_balanced} imply as $T \downarrow 0$
\[\frac{\Delta(T)}{s(T)} \sim \frac{2f_{S_\infty}(T)}{s(T)} \cdot \frac{1}{u(T)} \sim T \cdot r(T),\]
\noindent where $r := d/dt \log(s(t))$ is the reversed hazard rate of $S_\infty$. To control this, we develop asymptotics for  $K(x):=\log \E \exp(-xS_\infty)$ and use a classical Tauberian theorem to relate this to the log CDF. By independence, for $x > 0$
\[K(x) = \log \left(\prod_{n=2}^\infty \frac{\lambda_n}{\lambda_n + x}\right) = -\log\left(\prod_{n=2}^\infty \left[1 + \frac{x}{\lambda_n}\right]\right).\]
\noindent Since $(n-1)^2 \leq 2\lambda_n \leq n^2$, using the infinite product expansion of $\sinh$, we see
\[0 \leq K(x) + \log\left(\frac{\sinh(\pi \sqrt{2x})}{\pi \sqrt{2x}}\right) \leq \log(1+2x).\]
\noindent Hence as $\log(y^{-1} \sinh(y)) \sim y$ as $y \to \infty$, we see $K(x) \sim -\pi\sqrt{2x}$ as $x \to \infty$. Applying Corollary 2 of \cite{cadena2015notetauberiantheoremsexponential} with $A = 1, P(x) = s(x), B = -\pi^2/2,$ and $\beta = -1$ we see this implies that $\log(s(T)) \sim -\pi^2/2 T^{-1}$ as $T \downarrow 0$. 

 Let $h(t) := \log s(t)$. Pr\'ekopa's theorem implies that $S_k$ is log-concave for all $k$ and, as log-concavity is preserved under weak limits, so is $S_\infty$. Hence its distribution function $s(T)$ is log-concave, and thus $h(t)$ is concave. Thus for fixed $\epsilon \in (0,1)$, its derivative is bounded by the two secant slopes
\[\frac{h((1+\epsilon)t) - h(t)}{\epsilon t} \leq h'(t) \leq \frac{h(t) - h((1-\epsilon)t)}{\epsilon t}.\]
\noindent Our asymptotics for $h$ then imply
\[\frac{1 + o_\epsilon(1)}{1+\epsilon} \leq \frac{h'(t)}{\pi^2/2t^2} \leq \frac{1+o_\epsilon(1)}{1-\epsilon},\]
\noindent and thus
\[\frac{1}{1+\epsilon} \leq\lim\inf_{t \downarrow 0} \frac{h'(t)}{\pi^2/2t^2} \leq \lim\sup_{t \downarrow 0} \frac{h'(t)}{\pi^2/2t^2} \leq \frac{1}{1-\epsilon} \]
\noindent Sending $\epsilon \downarrow 0$ shows $r(t) \sim \pi^2/(2t^2)$ as desired.
\end{proof}

\noindent Note that above we showed
\[s(T) = \exp\left(-\frac{\pi^2}{2} T^{-1} (1 + o(1)) \right).\]
Since for typical trees (e.g. Yule trees) we have $T_{\mathrm{min}} = O(k^{-1})$, this heuristically tells us the original bound is $O(\log(k) \exp(k))$.

\subsubsection{Bound Asymptotics (Lemma \ref{lem:bound_asymptotics})}
\label{app:bound-asymptotics}

Now that we have analyzed the core function $g_{k,1}(T)$ we are ready to analyze our bounds. Let $\kappa_{q,k} = -\log\left((1-q)/(k-3)\right)$ be the coefficient that occurs in the numerator of all our bounds. Moreover, let $s(T) := \P(S_\infty \leq T)$ be our limiting CDF, $\Delta_k(T) := g_{k,1}(T) - s(T)$ our gap, and $c(T) := 2f_{S_\infty}(T)$ our gap's asymptotic constant. Recall that $M_o(k, T)$ denotes the original bound from Theorem \ref{thm:uricchios_bipartition_cover_bound} and $M_b(k,T)$ our balanced bound Theorem \ref{thm:improved_bipartition_cover_bound_balanced}.

 We start with $M_o$. Note Lemma \ref{lem:asymptotics_g_k1(T)} implies
\begin{equation}
    \log(1-g_{k,1}(T)) \sim \begin{cases} - T, & T \uparrow \infty, \\ 
 -\frac{k!}{2^{k-1}} T^{k-1}, &  T = o(k^{-2}).
\end{cases}
\label{eqn:asymptotics_T_mapsto_log(one_minus_g_k1(T))}
\end{equation}
\noindent From this

\[M_{o}(k,T) \sim \begin{cases}
    \frac{\kappa_{q,k}}{T}, & T \uparrow \infty, \\
    \frac{2^{k-3}\kappa_{q,k}}{(k-2)! T^{k-3}}, & T = o(k^{-2}).
\end{cases}\]

\noindent In particular, we recover the small $T$ bound found in Appendix $B$ of \cite{shekhar2018_how_many_genes_are_enough}, although here we exactly quantify the asymptotic constant.

  For the fixed $T$ and asymptotic $k$ bound, we Taylor expand $\log(x)$ about $1 - s$ to observe:\\
\[\log(1 - g_{k,1}(T) )= \log(1-s)-\sum_{n=1}^\infty \frac{\Delta^n}{n(1-s)^n} = \log(1-s) - \frac{\Delta}{1-s} - O(\Delta^2).\]
\noindent Hence this and Lemma \ref{lem:asymptotics_g_k1(T)} imply as $k \to \infty$
\[M_o(k,T) = \frac{\kappa_{q,k}}{\left(\frac{c(T)}{1-s} + o(1)\right) \cdot \frac{1}{k} - \log(1-s)} \sim \frac{\kappa_{q,k}}{-\log(1-s)}.\]

\noindent For $M_b(k,T)$, note as $g_{k,1}(T)$ is decreasing in $k$ we have
\[\sum_{\ell=2}^{k-2} (1-w_\ell(T))^n \geq (k-3)(1-g_{2,1}(T))^n = (k-3)\exp(-nT).\]
\noindent Hence $M_b(k,T)\geq \kappa_{q,k}T^{-1}$.
As $k \mapsto g_{k,1}(T)$ is decreasing and convex by Lemma \ref{lem:g_i1_is_convex}
    \[\sum_{\ell=2}^{k-2} (1-\E g_{U_\ell, 1}(T))^n \leq (k-3) (1- g_{\E U_{k-2}, 1}(T))^n.\]
\noindent Applying Corollary \ref{corr:upperbound_lineages_balanced}
\[M_b(k,T) \leq \frac{\kappa_{q,k}}{-\log(1-g_{u(T),1}(T))}, \quad u(T) := \frac{2-\exp(-T/2)}{1-\exp(-T/2)}.\]
\noindent Combining these two previous results
\[\frac{\kappa_{q,k}}{T} \leq M_{b}(k,T) \leq \frac{\kappa_{q,k}}{-\log(1-g_{u(T), 1}(T))}.\]
\noindent Since $M_b(k,T) \leq M_o(k,T)$, the lower bound shows us $M_b(k,T) \sim \kappa_{q,k} T^{-1}$ as $T \to \infty$. Since $u(T) \sim 2/T$ as $T \downarrow 0$, the upper bound does not provide any provable improvement in the regime $k^2T = o(1)$ that we analyzed previously. More careful analysis of the regime $kT = o(1)$ is likely necessary to observe a noticeable improvement, but this is outside the scope of our current analysis.

For fixed $T$, the upper bound implies that the asymptotic improvement factor of $M_b(k,T)$ over $M_o(k,T)$ is at least
\[\beta_T := \frac{\log(1-g_{u(T),1}(T))}{\log(1-s(T))}.\]
\noindent In Lemma~\ref{lem:improvement_ratio_asymptotics}, we show that $\beta_T \sim \pi^2/(2T)$ as $T \downarrow 0$. Thus, in this regime, the improvement factor is bounded below by a quantity asymptotic to $\pi^2/(2T)$.

Repeating our above analysis shows that \textit{any} bound built using the union bound and the coalescent function $g_{k,1}(T)$ cannot be asymptotically better than $\log(k)/T$. This is true, \textit{even if the descendant counts $\alpha_i$ are known}, as we can still upper bound $g_{\alpha_i, 1}(T) \leq g_{2,1}(T)$. Hence the asymptotics in $k$ are fundamentally limited to $\Theta(\log(k))$ by our union bound argument, and we can only hope to improve upon the constant down to the limit $T^{-1}$. Still, we see some finite-sample improvements empirically, as we observed in Figure \ref{fig:improvement_ratios}.

\section*{Statements and Declarations}

\noindent \textbf{Acknowledgments:} The author would like to thank Professor Steven Evans and Daniel Raban for their guidance and helpful comments in improving the exposition of this article.\\

\noindent\textbf{Competing interests:} No competing interests to declare.\\

\noindent \textbf{Funding}: This research did not receive any specific grant from funding agencies in the public, commercial, or not-for-profit sectors.\\

\noindent\textbf{Data availability:} Code used for the simulations and numerical experiments is available at \url{https://github.com/zackmcnulty/msc_bipartition_cover}. No external datasets were analyzed in this study. All results involving data were generated by simulation.\\

\noindent \textbf{Generative AI use and AI-assisted technologies}: During the preparation of this work the author used OpenAI Codex to assist with debugging and 
reviewing simulation code. Additionally, this tool was used in the above document to aid in typesetting, copy-editting, and improving the exposition. 
After using this tool/service, the author reviewed and edited the content as needed and takes full responsibility for the content of the published article.

\newpage 
\bibliographystyle{plainnat}
\bibliography{references}
\end{document}